\newtheorem{theorem}{Theorem}
\newtheorem{lemma}{Lemma}[section]
\newtheorem{proposition}{Proposition}[section]
\newtheorem{corollary}{Corollary}[section]
\newtheorem{definition}{\it Definiton}[section]
\newtheorem{example}{\it Example}[section]
\numberwithin{equation}{section}
\begin{document}
\title{Average number of zeros and mixed symplectic volume of Finsler sets}
\author{Dmitri Akhiezer and Boris Kazarnovskii}
\address {Institute for Information Transmission Problems \newline
19 B.Karetny per.,127994, Moscow, Russia,\newline
{\rm D.A.:} 
{\it akhiezer@iitp.ru},  
{\rm B.K.:} 
{\it kazbori@gmail.com}.}
\thanks{MSC 2010: 52A39, 53C30, 58A05}
\keywords{Alexandrov-Fenchel inequalities, Crofton formula, $k$-density, mixed volume}
\thanks{The research was carried out at the Institute for Information Transmission Problems under support by 
the Russian Foundation of Sciences, 
grant  No.14-50-00150. }
\begin{abstract}
Let $X$ be an $n$-dimensional manifold and $V _1, \ldots, V_n \subset C^\infty(X, \mathbb R)$  finite-dimensional vector spaces
with Euclidean metric. We assign to each $V_i$ a Finsler ellipsoid, i.e., a family of ellipsoids in the fibers
of the cotangent bundle to $X$. We prove that the average number of isolated common zeros of $f_1\in V_1, \ldots, f_n \in V_n$
is equal to the mixed symplectic volume of these Finsler ellipsoids.
If $X$ is a homogeneous space of a compact Lie group and all vector spaces $V_i$ together with their Euclidean metrics are invariant,
then the average numbers of zeros satisfy the inequalities, similar to Hodge
inequalities for intersection numbers of divisors on a projective variety. This is applied to the eigenspaces of Laplace
operator of an invariant Riemannian metric. The proofs are based on a construction of the ring of normal
densities on $X$, an analogue of the ring of differential forms. In particular, this construction is used to carry over
the Crofton formula to the product of spheres. 
\end{abstract}
\renewcommand{\subjclassname}
{\textup{2010} Mathematics Subject Classification}
\maketitle
\renewcommand{\thefootnote}{}
\maketitle

\section{Introduction}\label{intro}

Let $X$ be a differentiable manifold of dimension $n$. We assume that $X$ is connected and has
countable base of topology. Let
 $V \subset C^\infty(X, {\mathbb R})$ be a finite-dimensional vector space with Euclidean metric, such that
\begin{equation}\label{value}
\forall x \in X \ \exists f\in V: f(x) \ne 0.
\end{equation}
By a Finsler set or an $F$-set we mean a continuous family ${\mathcal E}=\{{\mathcal E}(x)\}$
of compact convex sets ${\mathcal E}(x) \subset T_x^*$.
For a given $V$ we construct the $F$-set ${\mathcal E} =\{{\mathcal E}(x)\}$, in which all ${\mathcal E}(x)$
are ellipsoids in the corresponding cotangent spaces. It turns out that the average number
of isolated common zeros of $f_1, \ldots, f_n \in V$, defined below in \ref{average}, equals the symplectic volume
of the domain 
\begin{equation} \label{domain}
\bigcup_{x\in X}{\mathcal E}(x) \subset T^*X, 
\end{equation}
multiplied by a constant depending on $n$.

This is easily understood in the special case of a submanifold $X$ of the unit sphere $S^{N-1} \subset {\mathbb R}^N$,
provided $X$ is not contained in a proper vector subspace and $V$ consists of linear functionals on ${\mathbb R}^N$.
Namely, give $X$ a Riemannian metric induced by the Euclidean metric of ${\mathbb R}^N$. Then
Crofton formula for the sphere tells us that the average number of zeros
is proportional to the volume of $X$. More precisely, this number is equal to ${2\over \sigma _n}{\rm vol }(X)$, where $\sigma _n$
is the volume of the $n$-dimensional unit sphere,
see e.g. \cite{Sa}. On the other hand, the Riemannian metric on $X$ allows us to identify $T_x^* $ and $T_x$.
If the ellipsoid ${\mathcal E}(x) \subset T_x^*$ is defined as the unit ball in this metric then the symplectic volume of the domain 
(\ref{domain}) differs from ${\rm vol}(X)$ by a coefficient depending only on $n$.

It is not hard to prove the result on the isolated common zeros of $f_1, \ldots, f_n \in V$ even if $X$ is not embedded in $S^{N-1}$.
The main difficulty appears in the case of $n$ Euclidean vector spaces
$V_1, \ldots, V_n \subset C^\infty(X,{\mathbb R})$. Then we have $n$ $F$-sets, namely,
$F$-ellipsoids ${\mathcal E}_1, \ldots, {\mathcal E}_n$
corresponding to $V_1, \ldots, V_n$. We define the mixed volume of $F$-sets and prove Theorem \ref{first}, showing that the average number of isolated zeros of $f_1 \in V_1, \ldots,
f_n \in V_n$ equals  the mixed volume of ${\mathcal E}_1, \ldots, {\mathcal E}_n$.
Theorem \ref{first} can be viewed as a real geometric counterpart of Bernstein-Kouchnirenko theorem,
relating the number of zeros of certain algebraic equations with the mixed volume of attached convex polytopes, see
\cite{Be}. The mixed volumes of convex polytopes satisfy  
Alexandrov-Fenchel inequalities, which yield
Hodge inequalities for intersection indices on some algebraic varieties, e.g., in the toric case \cite{Kh};
see also \cite{KK} for generalizations. In a special situation, we prove inequalities of this type, which we again call  Hodge inequalities,
for the average numbers of isolated zeros.  
Namely, we prove them for homogeneous spaces of compact Lie groups
and invariant Euclidean spaces $V_i$, see Theorem \ref{H-inequalities}.
Here too they are a consequence of Alexandrov-Fenchel inequalities.

The proof of Theorem \ref{first} is based on two facts. One of them is
Theorem \ref{prod}, carrying over Crofton formula to the product of spheres. The other one is
Theorem \ref{second},
calculating the product of some special 1-densities on $X$. Recall
that a $k$-density on a vector space 
is a continuous function $\delta $ on the cone of decomposable $k$-vectors, such that
$\delta (t\xi) = \vert t \vert \delta (\xi)$. A $k$-density on a manifold $X$ is a $k$-density
$\delta _x$ on each tangent space $T_x$, such that the assignment $x \mapsto \delta_x$ is
continuous. The main property of $k$-densities lies in the fact that they can be integrated along arbitrary,
not necessarily oriented, submanifolds of dimension $k$. We refer the reader to
\cite{GS}, \cite{PF} for other properties and applications of densities.

If $X$ is equipped with a non-negative quadratic form $g$ then for any $k \le n$ we have a $k$-density ${\rm vol}_{k,g}$.
The value of ${\rm vol}_{k,g}$ on a $k$-vector $\xi _1 \wedge \ldots \wedge \xi _k$ is the $g$-volume of the
parallelotope with edges $\xi _i$. An arbitrary $k$-density assigns to such a parallelotope its ``$\delta $-volume"
$\delta(\xi_1\wedge \ldots \wedge \xi_k)$. In Section \ref {ring} we define a graded subspace ${\mathfrak n}(X)$
in the space of all densities. The elements of ${\mathfrak n}(X)$ are called normal densities. For normal
densities we define the product making
${\mathfrak n}(X)$
into a commutative graded ring. Any smooth 1-density is normal. As an example, we have
the formula ${\rm vol}_{1,g}^k = c(n,k){\rm vol}_{k,g}$.

''Crofton formula for the product of spheres" is obtained following the pattern of complex projective
spaces \cite{Ka1}, \cite{Ka2}. In that case $V_i$ are Hermitian vector spaces of holomorphic functions
on a complex manifold $X$. Let $\theta _i : X \to V_i^*$ be the mapping assigning to $x \in X$ the functional $\theta _i(x)(f) = f(x),\, f\in V_i$. Then the average number of isolated common zeros of $f_i \in V_i$ in a domain $U \subset X$ equals
$\int _U \omega_1 \wedge \ldots \wedge \omega _n$, where $\omega _i$ is the
pull-back of the Fubini-Study form on ${\bf P}(V^*_i)$ under the mapping $X {\buildrel \theta _i \over \longrightarrow}V_i^*
 \to {\bf P}(V_i^*)$.

For a real manifold $X$ we prove that the average number of isolated common zeros of $f_i \in V_i$ in $U \subset X$ is
obtained by integrating over $U$ a certain $n$-density $\Omega $. Namely, if all $V_i$ are subject to (\ref {value})
then we have a similar mapping
$\theta _i $ from $X$ to the sphere in the dual space $V_i^*$ and we can define $g_i$ as the pull-back of the
metric form on that sphere. Then $\Omega = {1\over \pi^n} {\rm vol}_{1,g_1}\cdot \ldots \cdot {\rm vol}_{1,g_n}$,
where ${\rm vol}_{1,g_i}$ is the 1-density corresponding to the non-negative quadratic form $g_i$.
Thus the product of K\"ahler forms is replaced by the product of 1-densities
of the corresponding metrics.

For the proof of Crofton formula we use the standard technique of double fibrations and write $\Omega $ as the pull-back
and push-forward of the volume form on the space of systems of equations, see \cite{GS}, \cite{Ka2}, \cite{PF}, \cite{Sh}.
The main part of the proof is the presentation of $n$-density $\Omega $ as the product of the abovementioned 1-densities,
see Theorem \ref{prod}. 

The deduction of Theorem \ref{first} from Crofton formula relies on the following theorem from convex geometry
(Theorem \ref {A_1,...,A_k}). Given $k$ compact convex sets $A_1, \ldots, A_k \subset {\mathbb R}^n$, define
a $k$-density $d_k(A_1, \ldots, A_k)$ on ${\mathbb R}^n$ by 
$$d_k(A, \ldots, A_k) (\xi_1 \wedge \ldots \wedge \xi_k) = {\rm V}_k(\pi_HA_1, \ldots, \pi _HA_k)\cdot {\rm vol}_k\Pi_\xi,$$
where $\xi _i $ are linearly independent vectors generating parallelotope $\Pi _\xi$,
$\pi _H$ is the orthogonal projection map onto $H ={\mathbb R}\xi _1 + \ldots + {\mathbb R}\xi _k$
and ${\rm V}_k$ is the $k$-dimensional mixed volume. Then for a wide class of centrally symmetric convex sets,
including smooth convex bodies and zonoids (see \ref{products}), one has
\begin{equation}\label{cdot}
d_1(A_1)\cdot \ldots \cdot d_1(A_k) = k!\,d_k(A_1, \ldots, A_k).
\end{equation}

Given a compact convex body in ${\mathbb R}^n$, its $k$-th brightness function is the function
on ${\rm Gr}(k, {\mathbb R}^n)$ equal to the $k$-volume of the orthogonal projection of $A$
on a given $k$-dimensional subspace $H$. For $k =1 $ this function is called the width function. 
Determination of $A$ by its brightness functions and relations between these functions are
studied in geometric tomography, see \cite{Gar},\cite{Go},\cite{Ho}. The $k$-density $d_k(A)$
evaluated on a unit $k$-vector $\xi_1\wedge\ldots \wedge \xi_k $, where $ \xi_i \in H$, is the $k$-brightness of $A$ at $H\in {\rm Gr}(k, {\mathbb R}^n)$.
Therefore (\ref{cdot}) is an assertion from geometric tomography. In particular, for a centrally symmetric
smooth convex body $A$ one has $d_1^k(A) = k!d_k(A)$.
 
The space of normal densities on ${\mathbb R}^n$ can be identified with a subspace of translation invariant
valuations on compact convex sets. Under this identification, the product of smooth normal densities coincides with
Alesker product
of smooth valuations, see \cite{A}. Therefore (\ref{cdot}) can be regarded as an identity from the valuation theory, see \ref{val}.

Theorem \ref{first} and the product theorem for 1-densities related to $F$-sets (Theorem \ref {second}) 
are stated in Section \ref{main results}. Also in this section, we give applications of Theorem \ref{first} to
homogeneous spaces of compact Lie groups. Namely, we deduce Hodge inequalities 
for average numbers of isolated common zeros and consider these numbers in some detail
for eigenfunctions of the Laplace operator of an invariant Riemann metric. In Section \ref{ring} we construct
the ring of normal desities on a vector space and, after that, on a differentiable manifold. Proofs
of Theorems \ref{first} and \ref{second} are given in Section \ref{proofs}.

\noindent
{\sl The authors are grateful to Semyon Alesker for useful discussions.}

\section{Main results}\label{main results}
\subsection{Average number of zeros}\label{average}
Let $X$ be an $n$-dimensional  manifold and let
 $V_1,\ldots, V_n$  be finite dimensional vector subspaces in $C^\infty (X, { \mathbb R})$. Assume that each $V_i$ has a fixed
scalar product $\langle.,.\rangle_i$,  let $S_i \subset V_i$ be the sphere of radius 1
with center 0 and let $\sigma $ be the product of volumes of $S_i$. For a system
of functions $(s_1, \ldots, s_n) \in S_1 \times \ldots \times S_n$ we denote by
$N(s_1, \ldots, s_n)$ the number of isolated common zeros of $s_i,\ i=1,\ldots,n$.
We will see later that the following integral exists. We call 
$${\mathfrak M}_X(V_1, \ldots, V_n) = {1\over \sigma}\int _{S_1\times \ldots \times S_n}N(s_1,\ldots,s_n)\,ds_1\cdot\ldots\cdot ds_n$$
the average number of common zeros of $n$ functions $s_i$. 
For any point $x\in X$ define
the functional $\varphi _i (x) \in V_i^*$ by $ \varphi_i (x)(f) = f(x).$ 
Assuming (\ref{value}) for each $V_i$ we have $\varphi _i (x)\ne 0$. 
Equip $V_i^*$ with the dual scalar product $\langle .,.\rangle^*_i$, denote by $S_i^*$ the unit
sphere in $V_i^*$ and consider the mapping
$\theta _i : X \to S_i^*$ defined by
$$\theta _i(x) = {\varphi_i(x)\over \sqrt {\langle \varphi_i(x), \varphi_i(x)\rangle_i^*}}\,.$$
The pull-back of the Euclidean metric on $S_i^*\subset V_i^*$ under $\theta _i$ is a non-negative
quadratic form $g_i$  on the tangent bundle of $X$. 

Our first theorem computes ${\mathfrak M}_X(V_1, \ldots, V_n)$
in terms of $g_i$. Namely, let $g_{i,x}$ be the quadratic form on the tangent space $T_x$
corresponding to $g_i$.
Note that 
 $\sqrt{g_{i,x}}$ is a convex function and consider the convex set ${\mathcal E}_i(x) \subset T_x^*$ with
support function $\sqrt{g_{i,x}}$. In other words,
$${\rm max}\  _{_{\xi ^*\in {\mathcal E}_i(x)}} \ \xi^*(\xi) = \sqrt{g_{i,x}(\xi )}\,.$$
Then ${\mathcal E}_i(x)$ is a centrally symmetric convex body in the orthogonal
complement to the kernel of $g_{i,x}$. We call ${\mathcal E}_i(x)$ the ellipsoid
associated with $g_i$ at $x\in X$.

Suppose that for every $x \in X$ we are given a compact convex set ${\mathcal E}(x) \subset T_x^*$
depending continuously on $x\in X$. We call the collection ${\mathcal E} =\{{\mathcal E}(x) \ \vert\ x\in X\}$ a Finsler set 
or an $F$-set in $X$. A Finsler set is said to be centrally symmetric
if each ${\mathcal E}(x)$ is centrally symmetric.
In particular, the $F$-set ${\mathcal E}_i = \{{\mathcal E}_i(x)\}$
is centrally symmetric. This $F$-set is called the $F$-ellipsoid associated to $g_i$.

The volume of an $F$-set ${\mathcal E}$ is defined as the volume
of $\cup_{x\in X}{\mathcal E}(x) \subset T^*X$ with respect
to the standard symplectic structure on the cotangent bundle.
More precisely, if the symplectic form is $\omega$ then the volume form is $\omega ^n/ {n!}$.
Using Minkowski sum and homotheties, we consider linear combinations of convex sets with non-negative
coefficients.
The linear combination of $F$-sets is defined by
$$(\sum _i \lambda _i{\mathcal E}_i)(x) = \sum _i \lambda _i {\mathcal E}_i(x).$$
The symplectic volume of the $F$-set $\lambda _1 {\mathcal E}_1 + \ldots +\lambda _n{\mathcal E}_n$
is a homogeneous polynomial of degree $n$ in $\lambda_1, \ldots, \lambda _n$. Its coeficient at $\lambda _1 \cdot \ldots \cdot \lambda _n$ divided by $n!$ is called the mixed volume of $F$-sets ${\mathcal E}_1,
\ldots, {\mathcal E}_n$ and is denoted by ${\rm V}_n^F({\mathcal E}_1, \ldots ,{\mathcal E}_n)$. 

\begin{theorem}\label{first}
Assume that all spaces $V_i$ are subject to {(\ref {value})} and  let ${\mathcal E}_i$ be the $F$-ellipsoid associated to $g_i,
\ i=1,\ldots,n$. Then $${\mathfrak M}_X(V_1, \ldots, V_n) = {n!\over(2\pi)^n}\cdot {\rm V}_n^F({\mathcal E}_1, \ldots ,{\mathcal E}_n)$$ 
\end{theorem}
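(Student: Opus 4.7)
The strategy is to combine the two auxiliary results previewed in the introduction---the Crofton formula for a product of spheres (Theorem \ref{prod}) and the product formula for the 1-densities attached to the $F$-ellipsoids (Theorem \ref{second})---and then read off the constant by a fiberwise Fubini argument on $T^*X$. By Theorem \ref{prod}, one will have
$$\mathfrak{M}_X(V_1,\ldots,V_n)\;=\;\int_X\Omega,\qquad \Omega\;=\;\tfrac{1}{\pi^n}\operatorname{vol}_{1,g_1}\cdot\ldots\cdot\operatorname{vol}_{1,g_n},$$
the product being taken in the ring $\mathfrak{n}(X)$ of normal densities constructed in Section \ref{ring}; as a byproduct this legitimates the integral defining $\mathfrak{M}_X$. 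The plan is then to rewrite $\Omega$ through \eqref{cdot} and integrate.

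First I will identify each factor with a width density. Since $\sqrt{g_{i,x}}$ is by construction the support function of the centrally symmetric smooth convex body $\mathcal{E}_i(x)\subset T_x^*$, the density $\operatorname{vol}_{1,g_i}$ on $T_xX$ coincides with that support function, and therefore $d_1(\mathcal{E}_i)=2\operatorname{vol}_{1,g_i}$. Applying \eqref{cdot} pointwise---which is precisely what Theorem \ref{second} asserts in the present setting---yields
$$\operatorname{vol}_{1,g_1}\cdot\ldots\cdot\operatorname{vol}_{1,g_n}\;=\;\tfrac{n!}{2^n}\,d_n(\mathcal{E}_1,\ldots,\mathcal{E}_n),$$
so $\Omega=\tfrac{n!}{(2\pi)^n}d_n(\mathcal{E}_1,\ldots,\mathcal{E}_n)$. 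The remaining task is to match $\int_X d_n(\mathcal{E}_1,\ldots,\mathcal{E}_n)$ with $V_n^F(\mathcal{E}_1,\ldots,\mathcal{E}_n)$. For a frame $\xi_1,\ldots,\xi_n$ of $T_xX$, the value of $d_n$ equals the mixed volume of the fibers $\mathcal{E}_i(x)\subset T_x^*$ in the Lebesgue measure dual to $\xi_1\wedge\cdots\wedge\xi_n$ multiplied by $\operatorname{vol}_n(\Pi_\xi)$; this product is intrinsic and, under Fubini against the symplectic volume form $\omega^n/n!$ on $T^*X$, is exactly the fiberwise integrand that produces the symplectic volume of $\bigcup_x\mathcal{E}(x)$. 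Expanding this volume for $\lambda_1\mathcal{E}_1+\cdots+\lambda_n\mathcal{E}_n$ in the $\lambda_i$ and reading off the coefficient of $\lambda_1\cdots\lambda_n$ divided by $n!$ will yield the desired identification, and hence the constant $n!/(2\pi)^n$.

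The genuinely difficult content sits inside the two auxiliary theorems: Theorem \ref{prod} will be established by a double-fibration argument presenting $\Omega$ as a pull-back/push-forward of the volume form on $S_1^*\times\cdots\times S_n^*$ through the incidence correspondence, and Theorem \ref{second} relies on the tomographic identity \eqref{cdot}, which by \ref{val} is equivalent to an Alesker product of smooth valuations. Granting these, the principal obstacle internal to the proof of Theorem \ref{first} is the final fiberwise Fubini step: its subtlety lies in the fact that the $\mathcal{E}_i(x)$ live in $T_x^*$ while $d_n$ is evaluated on vectors in $T_xX$, so the intrinsic character of the right-hand side emerges only through the canonical pairing of a volume form on $T_xX$ with its dual Lebesgue measure on $T_x^*$.
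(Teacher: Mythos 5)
Your proposal follows the same overall architecture as the paper's proof: reduce to the identity $\mathfrak{M}_X = \frac{1}{\pi^n}\int_X \operatorname{vol}_{1,g_1}\cdots\operatorname{vol}_{1,g_n}$ via the Crofton formula (Theorem \ref{prod}), convert the product of $1$-densities into a mixed $n$-density via Theorem \ref{second} and the identity $d_1(\mathcal{E}_i)=2\operatorname{vol}_{1,g_i}$, and then integrate using the relation \eqref{m-density} between the mixed $n$-density and the mixed symplectic volume. Those reductions, and the resulting constant $n!/(2\pi)^n$, are all correct.

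However, there is a genuine gap at your very first step. Theorem \ref{prod} is stated for an \emph{embedded} submanifold $X\subset S_1\times\cdots\times S_n$, whereas in Theorem \ref{first} the realization $\theta=\theta_1\times\cdots\times\theta_n\colon X\to S_1^*\times\cdots\times S_n^*$ need not be an embedding: it can have critical points (the forms $g_{i,x}$ can all degenerate at some $x$) and can fail to be injective. You write ``By Theorem \ref{prod}, one will have $\mathfrak{M}_X=\int_X\Omega$,'' but that equality is only immediate when $\theta$ embeds $X$. The paper devotes a substantial portion of the proof to precisely this point: it works on relatively compact domains, establishes \eqref{main1} on the open set where $\theta$ is a local embedding via an inclusion--exclusion over a finite covering (Lemma \ref{final1}), shows the contribution of a shrinking neighborhood of the critical locus to the density integral tends to zero (Lemma \ref{final2}), and uses Sard's lemma together with a Hausdorff-measure estimate to show that the set of systems with a zero in the critical locus has measure zero in $S^*$ (Lemma \ref{final3}). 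Your proof sketch offers no substitute for these steps, and without them the passage from Theorem \ref{prod} to the displayed formula does not go through. You also identify ``the final fiberwise Fubini step'' as the principal internal obstacle, but that step is just the defining relation \eqref{m-density} and is essentially immediate; the real difficulty you have skipped is the critical-locus argument.

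One smaller imprecision: you call $\mathcal{E}_i(x)$ a ``centrally symmetric smooth convex body.'' When $g_{i,x}$ is degenerate, $\mathcal{E}_i(x)$ is a lower-dimensional ellipsoid and is not a smooth body in $T_x^*$; the reason Theorem \ref{second} still applies is that ellipsoids (of any dimension) are zonoids, as shown in Lemma \ref{zonoid}, and the hypothesis of Theorem \ref{second} allows Minkowski sums of smooth bodies and zonoids with one summand absent. Worth being careful about, since degeneracy is exactly the case your embedding assumption silently excludes.
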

\noindent
The proof will be given in \ref{proof1}.

Introduce a Riemannian metric $h$ on $X$. Let $h_x$ be the corresponding metric on $T_x$ and $h^*_x$ the
dual metric on $T_x^*$. For $F$-sets ${\mathcal E}_i$ denote by $ {\rm V}_{{\mathcal E}_1, \ldots , {\mathcal E}_n}(x)$
the mixed volume of the convex sets ${\mathcal E}_1(x), \ldots, {\mathcal E}_n(x)$ measured with the help of $h^*_x$.
Define the mixed $n$-density of $F$-sets by
$$D_n({\mathcal E}_1, \ldots, {\mathcal E}_n) = {\rm V}_{{\mathcal E}_1, \ldots {\mathcal E}_n}\cdot dx,$$
where $dx$ is the Riemannian $n$-density on $X$, and note that this definition does not depend on $h$.
The mixed volume is related to the mixed density by
\begin{equation}\label{m-density}
{\rm V}_n^F({\mathcal E}_1, \ldots, {\mathcal E}_n) = \int_X \, D_n({\mathcal E}_1, \ldots, {\mathcal E}_n).
\end{equation}

\subsection{Products of mixed densities}\label{products}
We want to define a mixed $k$-density $D_k({\mathcal E}_1, \ldots, {\mathcal E}_k)$,
generalizing the above definition of a mixed $n$-density. This will lead us to Theorem {\ref {second}} about the
product of mixed densities, which will be used later in the 
proof of Theorem {\ref {first}}.

For arbitrary tangent vectors $\xi _1, \ldots, \xi_k \in T_x$ let $H\subset T_x$ be the subspace generated by $\xi_i$ 
and let $H^\perp \subset T_x^*$ be the orthogonal complement to $H$.  Given an $F$-set  ${\mathcal E}$ in $X$, denote
by ${\mathcal E}_{\xi_1, \ldots, \xi_k}(x)$ the image of ${\mathcal E}(x)$ under the projection map $T_x^* \to T_x^*/H^\perp$. 
Then $\xi_1\wedge\ldots\wedge \xi_k$ can be considered as a volume form on $T_x^*/H^\perp$, the dual space to $H$.
Put
$$D_k({\mathcal E}) (\xi_1\wedge \ldots \wedge \xi_k) =
\Bigl \vert  \int_{{\mathcal E}_{\xi_1, \ldots, \xi_k}(x)} \xi_1 \wedge \ldots \wedge \xi_k    \, \Bigr \vert.$$
Then, by definition, $D_k({\mathcal E})$ is a $k$-density on $X$.
For a linear combination  of $F$-sets $\sum \lambda _i{\mathcal E}_i$ the expression
$D_k(\lambda _1 {\mathcal E}_1+ \ldots + \lambda _k{\mathcal E}_k)$
is a homogeneous polynomial of degree $k$ in $\lambda _i$. Its coefficient at $\lambda _1\cdot \ldots \cdot \lambda_k$,
divided by $k!$, is denoted by $D_k({\mathcal E}_1, \ldots, {\mathcal E}_k)$.
We call $D_k({\mathcal E})$ the $k$-density of an $F$-set ${\mathcal E}$ and
$D_k({\mathcal E}_1, \ldots, {\mathcal E}_k)$ the mixed $k$-density of ${\mathcal E}_1, \ldots, {\mathcal E}_k$.

If $X$ is equipped with a Riemannian metric then $T_x^*$ is identified with $T_x$, $T_x^*/H^\perp $
with $H$, and ${\mathcal E}_{\xi_1, \ldots, \xi _k}(x)$ with the orthogonal projection
of ${\mathcal E}(x)$ onto $H$. The value $D_k({\mathcal E})(\xi_1
\wedge \ldots \wedge \xi_k)$ is the volume of this projection
multiplied by the length of $\xi_1\wedge \ldots \wedge \xi_k$.

In what follows, 1-densities play a special role. It is easy to see that they can be computed in terms of support functions
as follows.
The function on the tangent bundle, defined by
$$h_{\mathcal E}(x,\xi) = {\rm max}_{\eta \in {\mathcal E}(x)}\, \eta(\xi),$$
is called the support function of ${\mathcal E}$. The corresponding $1$-density is given by
$D_1({\mathcal E})(\xi) = h_{\mathcal E}(x,\xi) + h_{\mathcal E}(x,-\xi)$.

In Secton \ref {ring} we introduce the notion of a normal density on an affine space
 and define the product of normal densities. We also define
the ring ${\mathfrak n}(X)$ of normal densities on $X$ with 
pointwise product, see Theorem \ref {density-manifold}.

Recall that a zonoid is a compact convex body that can be approximated,
in Hausdorff metric, by a Minkowski sum of segments \cite{Sch}. In this paper, we use
the following notion. A {\it smooth} convex body in ${\mathbb R}^n$ is a compact convex set whose support 
function is of class $C^\infty$ on the unit sphere. A smooth convex body is always
$n$-dimensional.

\begin{theorem}\label{second} Let ${\mathcal E}_1, \ldots, {\mathcal E}_k$ be centrally symmetric $F$-sets in $X$. 
Assume that for every $x \in X$ each ${\mathcal E}_i(x)\subset T_x^*X $  is the Minkowski sum of 
a smooth convex body and a zonoid (one of the two summands can be absent).
Then the densities $D_k({\mathcal E}_i)$ are in ${\mathfrak n}(X)$ and
$$D_1({\mathcal E}_1)\cdot \ldots \cdot D_1({\mathcal E}_k) = k!\, D_k({\mathcal E}_1, \ldots, {\mathcal E}_k).$$

\end{theorem}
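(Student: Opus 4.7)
The plan is to reduce Theorem~\ref{second} to its pointwise affine counterpart (\ref{cdot}). Since the product in $\mathfrak{n}(X)$ is pointwise (Theorem~\ref{density-manifold}), the desired equality needs to be verified fibrewise, i.e.\ on each $T_x^*X$. Fixing an auxiliary Riemannian metric on $X$ to identify $T_x^*$ with Euclidean space, the statement becomes
$$d_1(A_1)\cdot\ldots\cdot d_1(A_k) = k!\,d_k(A_1,\ldots,A_k),$$
for centrally symmetric convex bodies $A_i\subset\mathbb R^n$, each a Minkowski sum of a smooth convex body and a zonoid. As a preliminary step I would check that the 1-densities $D_1({\mathcal E}_i)$ lie in $\mathfrak{n}(X)$ for bodies of this class, so that the product makes sense. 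Using $D_1({\mathcal E})(\xi)=h_{\mathcal E}(x,\xi)+h_{\mathcal E}(x,-\xi)$, this is immediate for a single segment (where $D_1$ is essentially $|\langle\xi,\cdot\rangle|$), extends to finite sums of segments by additivity and then to arbitrary zonoids by Hausdorff-continuity of $h_{\mathcal E}$, and holds for smooth bodies by the $C^\infty$ regularity of $h_{\mathcal E}(x,\cdot)$ on the unit sphere built into the definition of normality in Section~\ref{ring}.

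The core of the proof is the affine identity. Both sides are symmetric and $k$-multilinear in $(A_1,\ldots,A_k)$: the right-hand side by Minkowski multilinearity of mixed volumes, the left-hand side by multilinearity of the Minkowski sum at the level of 1-densities and of the product in $\mathfrak{n}(\mathbb R^n)$. By polarisation it therefore suffices to prove the diagonal identity $d_1(A)^k=k!\,d_k(A,\ldots,A)$ for a single body $A$ that is either a zonoid or a smooth convex body. For a finite sum of segments $A=\sum_j[-\xi_j,\xi_j]$ one has $d_1(A)(\eta)=2\sum_j|\langle\xi_j,\eta\rangle|$; expanding the $k$-th power by the multinomial formula and comparing with the classical Cauchy formula for the $k$-volume of the projection $\pi_H A$ that defines $d_k(A,\ldots,A)$ gives the identity. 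Continuity of both sides in the Hausdorff metric extends it to all zonoids.

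The main obstacle is the smooth case, since a smooth centrally symmetric body need not be a zonoid. The approach I would take is the generalised-zonoid representation: when $h_A$ is $C^\infty$, it can be written on the sphere as the cosine transform of a signed measure, so $A$ is a ``virtual'' Minkowski combination of segments. By the $k$-multilinearity established above, the zonoid identity then extends formally to this signed combination and yields the identity for $A$. The delicate point is to show that the product in $\mathfrak{n}(\mathbb R^n)$ and the mixed-density functional $d_k$ are both continuous with respect to the topology underlying this signed representation. This is precisely where the identification of $\mathfrak{n}(\mathbb R^n)$ with a subspace of smooth translation-invariant valuations and the Alesker product \cite{A} mentioned in the introduction become essential: under this identification $d_1(A)$ corresponds to a mixed-volume valuation and the product with $d_1(B)$ corresponds to a known valuation-theoretic mixed-volume expression, so the required continuity and compatibility follow from standard properties of the Alesker algebra.

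Combining the zonoid and smooth cases via the bilinearity of Minkowski sums $A_i = A_i^{\rm sm}+A_i^{\rm zon}$ completes the proof of the affine identity, and hence of Theorem~\ref{second}. I expect the purely combinatorial computation for segments to be straightforward, and the main technical work to concentrate in the passage from zonoids to smooth bodies via the signed-measure representation and the valuation-theoretic framework.
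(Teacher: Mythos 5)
Your reduction to the pointwise affine identity and the use of polarisation are the same as the paper's, but the route you sketch for the affine identity itself runs in the opposite direction from the paper and leaves a genuine gap at the hardest point.

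First, a subsidiary issue: you claim the normality of $D_1$ for a zonoid follows from the segment case ``by Hausdorff-continuity.'' Normality is not a closed condition --- the paper explicitly notes that $\mathfrak n_k$ is a \emph{non-closed} subspace of $C(\mathrm{Gr}(k,V))$ --- so a Hausdorff limit of normal densities need not be normal. The paper's Lemma \ref{zonoid1} instead uses the characterisation of zonoid support functions as cosine transforms of non-negative measures on the sphere, combined with Proposition \ref{cos-transf*}, to exhibit the Crofton measure directly. Your segment-and-limit argument does not establish normality without an additional closure-type lemma you do not state.

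The central gap is in the smooth case. You propose to write $h_A=T_1\nu$ for a \emph{signed} measure $\nu$, view $A$ as a ``virtual'' combination of segments, and extend the zonotope identity ``formally'' by multilinearity, appealing to the Alesker algebra for the requisite continuity. But the paper's valuation-theoretic remarks in Section \ref{val} are explicitly flagged as ``neither proved, nor used in the rest of the paper,'' so invoking them is not available within this framework. More importantly, the only continuity the paper proves for the product of normal densities (Proposition \ref{continuous}) has a crucial \emph{non-negativity} hypothesis on the defining measures; it gives no control over limits of signed combinations, which is exactly what your approach requires. The paper avoids this entirely: it proves the diagonal smooth identity $d_1(A)^k(B)=k!\,d_k(A)(B)$ directly, by an induction on $k$ (Proposition \ref{d_1,d_k}) whose base relies on an integral identity for smooth centrally symmetric bodies (Lemma \ref{Alesker}, using self-adjointness of the cosine transform and $n\,\mathrm V_n(A)=\int h_A K^{-1}$), pushed through projections by Proposition \ref{Haar-2}. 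The general case is then handled by the \emph{opposite} approximation from yours: approximate the zonoid summand by \emph{smooth} zonoids (Lemma \ref{zonoid}), apply the smooth case to $A_i+Z_{ij}$, and pass to the limit via Proposition \ref{continuous} --- which applies precisely because the approximating zonoid measures \emph{are} non-negative. Until you supply a substitute for Lemma \ref{Alesker} (or a proved continuity statement for signed Crofton measures), the smooth case in your outline is not established.
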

\noindent
The proof will be given in \ref{densities d_k}.
\begin{corollary}\label{D_pD_q}
Let $p+q = k \le n$. Then
$$D_p({\mathcal E}_1, \ldots, {\mathcal E}_p)\cdot D_q({\mathcal E}_{p+1}, \ldots, {\mathcal E}_k) = {k!\over{p!q!}}
\cdot D_k({\mathcal E}_1, \ldots, {\mathcal E}_k).$$
\end{corollary}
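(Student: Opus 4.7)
The plan is to apply Theorem~\ref{second} three times---once to the subcollection $\{{\mathcal E}_1, \ldots, {\mathcal E}_p\}$, once to $\{{\mathcal E}_{p+1}, \ldots, {\mathcal E}_k\}$, and once to the full collection $\{{\mathcal E}_1, \ldots, {\mathcal E}_k\}$---and to combine the three identities inside the commutative ring ${\mathfrak n}(X)$ constructed in Theorem~\ref{density-manifold}. The hypotheses of Theorem~\ref{second} (central symmetry and representability, at each $x\in X$, as a Minkowski sum of a smooth convex body and a zonoid) are pointwise conditions on the individual $F$-sets ${\mathcal E}_i$, so they automatically pass to any subcollection; the constraints $p\le n$ and $q\le n$ follow from $p+q=k\le n$.

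Concretely, the first two applications give
\[
D_1({\mathcal E}_1)\cdot \ldots \cdot D_1({\mathcal E}_p) = p!\, D_p({\mathcal E}_1, \ldots, {\mathcal E}_p)
\]
and
\[
D_1({\mathcal E}_{p+1})\cdot \ldots \cdot D_1({\mathcal E}_k) = q!\, D_q({\mathcal E}_{p+1}, \ldots, {\mathcal E}_k).
\]
Multiplying these two identities in ${\mathfrak n}(X)$ and using commutativity of the product yields
\[
p!\,q!\, D_p({\mathcal E}_1, \ldots, {\mathcal E}_p)\cdot D_q({\mathcal E}_{p+1}, \ldots, {\mathcal E}_k) = D_1({\mathcal E}_1)\cdot \ldots \cdot D_1({\mathcal E}_k).
\]
The third application of Theorem~\ref{second}, to the full collection, identifies the right-hand side with $k!\, D_k({\mathcal E}_1, \ldots, {\mathcal E}_k)$. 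Dividing by $p!\,q!$ gives the asserted identity.

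There is no real obstacle: the corollary is a formal consequence of Theorem~\ref{second} together with commutativity and associativity of the product in ${\mathfrak n}(X)$. The only points requiring verification are that the pointwise smoothness-plus-zonoid hypothesis indeed inherits to subcollections (which is immediate, as it concerns each ${\mathcal E}_i(x)$ separately) and that the numerical factor $k!/(p!\,q!)$ arises correctly from the three factorials produced by the three invocations of Theorem~\ref{second}.
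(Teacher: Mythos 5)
Your proof is correct and takes essentially the same route as the paper: the paper's one-line proof (``apply Theorem \ref{second} and write mixed densities as products of $1$-densities'') unpacks precisely into your three invocations of Theorem \ref{second}, the multiplication in the commutative ring ${\mathfrak n}(X)$, and the bookkeeping of $p!$, $q!$, $k!$.
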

\begin{proof} It suffices to apply Theorem \ref{second} and to write mixed densities as products of $1$-densities.
\end{proof}

\subsection{Hodge inequalities}\label{Hodge}
As a corollary from Theorem \ref{first},
we show here that, for a homogeneous space $X$ of a compact Lie group, the average numbers
of zeros are subject to certain inequalities. We call them Hodge inequalities because they are similar to
the well-known inequalities for intersection indices in algebraic geometry. 

\begin{theorem}\label{H-inequalities}
Let $X$ be a homogeneous space of a compact Lie group. Assume that the vector spaces $V_i$ and their scalar
products $\langle.,.\rangle_i$ are invariant under the given transitive action. Then one has the following Hodge
inequalities:
$$ {\mathfrak M}_X^2(V_1,\ldots,V_{n-1},V_n) \ge {\mathfrak M}_X(V_1,\ldots,V_{n-1},V_{n-1}) \cdot
{\mathfrak M}_X(V_1,\ldots,V_n,V_n) \,. $$
\end{theorem}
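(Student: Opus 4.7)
The plan is to reduce the stated inequality, via Theorem \ref{first}, to the classical Alexandrov--Fenchel inequality applied at a single cotangent space, exploiting the homogeneity of $X$ to eliminate the integration.

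First I would fix a $G$-invariant Riemannian metric $h$ on $X$, which exists since $G$ is compact. Because each $V_i$ and its inner product $\langle\cdot,\cdot\rangle_i$ are $G$-invariant, the evaluation map $\varphi_i \colon X \to V_i^*$ is $G$-equivariant with respect to the contragredient representation, and the dual Euclidean structure on $V_i^*$ is $G$-invariant. Hence the pull-back quadratic form $g_i$ on $TX$ is $G$-invariant, and the associated $F$-ellipsoid $\mathcal{E}_i$ is equivariant: the codifferential of the action of $g\in G$ carries $\mathcal{E}_i(x) \subset T_x^*X$ to $\mathcal{E}_i(gx) \subset T_{gx}^*X$.

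Next, by Theorem \ref{first} each of the averages ${\mathfrak M}_X$ appearing in the statement equals $\tfrac{n!}{(2\pi)^n}$ times the corresponding mixed symplectic volume ${\rm V}_n^F$, which via (\ref{m-density}) can be written as
$${\rm V}_n^F(\mathcal{E}_{i_1},\ldots,\mathcal{E}_{i_n}) \;=\; \int_X {\rm V}_{\mathcal{E}_{i_1},\ldots,\mathcal{E}_{i_n}}(x)\, dx.$$
Both the Riemannian density $dx$ and the pointwise mixed volume (computed using the invariant dual metric $h^*_x$) are $G$-invariant functions of $x$, so by transitivity the integrand is a constant $c(\mathcal{E}_{i_1},\ldots,\mathcal{E}_{i_n})$ equal to the ordinary mixed volume of the ellipsoids $\mathcal{E}_{i_j}(x_0) \subset T^*_{x_0}X$ at any fixed base point $x_0$. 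Therefore ${\rm V}_n^F(\mathcal{E}_{i_1},\ldots,\mathcal{E}_{i_n}) = {\rm vol}_h(X)\cdot c(\mathcal{E}_{i_1},\ldots,\mathcal{E}_{i_n})$.

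After cancelling the positive common factor $\bigl(\tfrac{n!}{(2\pi)^n}{\rm vol}_h(X)\bigr)^2$, the asserted Hodge inequality reduces to the classical Alexandrov--Fenchel inequality for the centrally symmetric compact convex bodies $\mathcal{E}_i(x_0)$ in the Euclidean space $(T^*_{x_0}X, h^*_{x_0})$. The main---and essentially only---obstacle is the possibility that some $\mathcal{E}_i(x_0)$ is degenerate, which occurs precisely when $g_{i,x_0}$ has nontrivial kernel. This causes no real difficulty, since Alexandrov--Fenchel is valid for arbitrary compact convex sets in $\mathbb{R}^n$; alternatively one may perturb $\mathcal{E}_i$ by an $\varepsilon$-ball and let $\varepsilon \to 0$, using continuity of mixed volumes in the Hausdorff metric.
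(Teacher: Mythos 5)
Your proposal is correct and follows essentially the same route as the paper: reduce via Theorem \ref{first} and (\ref{m-density}) to a pointwise mixed-volume inequality by invariance, then invoke Alexandrov--Fenchel at a single fiber. The only difference is your explicit remark about possibly degenerate ellipsoids, a minor refinement the paper leaves implicit since Alexandrov--Fenchel holds for arbitrary compact convex sets.
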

\begin{proof}
By Theorem \ref{first} it is enough to prove the inequality
\begin{equation}\label{mv.inequalities}
 {\rm V}_n^F({\mathcal E}_1, \ldots, {\mathcal E}_n)^2 \ge {\rm V}_n^F({\mathcal E}_1, \ldots, {\mathcal E}_{n-1}, {\mathcal E}_{n-1}) \cdot
 {\rm V}_n^F({\mathcal E}_1, \ldots, {\mathcal E}_n, {\mathcal E}_n) .
\end{equation}
In our situation,
the quadratic forms $g_i$ and the ellipsoids ${\mathcal E}_i$ are invariant.
Choose an invariant Riemannian metric on $X$.
Then the mixed density $D_n({\mathcal E}_1, \ldots, {\mathcal E}_n)$ is also invariant
and the mixed volume ${\rm V}_{{\mathcal E}_1, \ldots, {\mathcal E}_n}(x)$ does not depend on $x$.
Furthermore,
$${\rm V}_n^F({\mathcal E}_1, \ldots, {\mathcal E}_n) = {\rm V}_{{\mathcal E}_1, \ldots, {\mathcal E}_n}(x)\cdot {\rm vol}(X)$$ 
for any $x \in X$ by (\ref {m-density}). Fix a point $x \in X$ and let $A_i = {\mathcal E}_i(x)$.
Then (\ref {mv.inequalities}) turns into
$${\rm V}_n^2(A_1, \ldots, A_n)\ge {\rm V}_n(A_1, \ldots, A_{n-1}, A_{n-1})\cdot {\rm V}_n(A_1, \ldots, A_n, A_n),$$
where ${\rm V}_n$
is the mixed volume of compact convex sets. The latter inequalities follow from Alexandrov-Fenchel inequalities, see \cite{Al}.  
\end{proof}
\begin{corollary}\label{H-inequalities*}
In the setting of Theorem \ref{H-inequalities} one has
$${\mathfrak M}_X^n(V_1,\ldots,V_n) \ge {\mathfrak M}_X(V_1)\cdot\ldots\cdot {\mathfrak M}_X(V_n).$$ 
\end{corollary}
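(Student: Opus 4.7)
The plan is to reduce Corollary \ref{H-inequalities*} to the classical iterated Alexandrov--Fenchel inequality, using Theorem \ref{first} and the invariance argument already deployed for Theorem \ref{H-inequalities}. First, I read ${\mathfrak M}_X(V_i)$ as shorthand for ${\mathfrak M}_X(V_i,\ldots,V_i)$ with $V_i$ repeated $n$ times, since ${\mathfrak M}_X$ is defined on $n$-tuples. By Theorem \ref{first} each of the $n+1$ numbers that appears is $n!/(2\pi)^n$ times an $F$-mixed volume, so after clearing this common factor to the $n$-th power on both sides it suffices to prove
\[
{\rm V}_n^F({\mathcal E}_1,\ldots,{\mathcal E}_n)^n \ge \prod_{i=1}^n {\rm V}_n^F({\mathcal E}_i,\ldots,{\mathcal E}_i),
\]
where ${\mathcal E}_i$ is the $F$-ellipsoid associated to $g_i$.

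Next, I would repeat the invariance reduction from the proof of Theorem \ref{H-inequalities}: fix any $x \in X$, set $A_i = {\mathcal E}_i(x)$, and pick an invariant Riemannian metric. Because the $g_i$ and the metric are $G$-invariant, the mixed density ${\rm V}_{{\mathcal E}_{i_1},\ldots,{\mathcal E}_{i_n}}$ is constant on $X$, so formula (\ref{m-density}) gives ${\rm V}_n^F({\mathcal E}_{i_1},\ldots,{\mathcal E}_{i_n}) = {\rm vol}(X)\cdot {\rm V}_n(A_{i_1},\ldots,A_{i_n})$ for every choice of indices. Cancelling ${\rm vol}(X)^n$ from the $n$-th power of this inequality, and using ${\rm V}_n(A_i,\ldots,A_i) = {\rm vol}(A_i)$, reduces the claim to the purely convex-geometric assertion
\[
{\rm V}_n(A_1,\ldots,A_n)^n \ge \prod_{i=1}^n {\rm vol}(A_i).
\]

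Finally, this last inequality is the standard iterated form of Alexandrov--Fenchel for compact convex bodies in ${\mathbb R}^n$; it follows from the pairwise Hodge inequality already invoked in Theorem \ref{H-inequalities} by a straightforward induction on the number of distinct slots (see \cite{Al}). I expect no new obstacle here: the nontrivial analytic input, the A--F inequality itself, has already been used, and the passage from the pairwise to the iterated form is routine convex-geometric bookkeeping. The main thing to be careful about is the notational reading of ${\mathfrak M}_X(V_i)$ as the diagonal $n$-tuple, which is what makes the reduction to ${\rm V}_n(A_i,\ldots,A_i) = {\rm vol}(A_i)$ work.
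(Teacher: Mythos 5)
Your proposal is correct and follows essentially the same route as the paper: reduce via Theorem \ref{first} and the invariance argument to a single point $x\in X$, where the claim becomes the classical iterated Alexandrov--Fenchel inequality ${\rm V}_n(A_1,\ldots,A_n)^n \ge \prod_i {\rm vol}(A_i)$, which the paper simply cites with ``the proof for mixed volumes from \cite{Al} applies.'' You have merely unpacked that one line into its implicit steps, including the (correct) reading of ${\mathfrak M}_X(V_i)$ as the diagonal $n$-tuple.
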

\begin{proof} The proof for mixed volumes from \cite{Al} applies.
\end{proof}
Recall that a Riemannian homogeneous space $X = K/L$ is called
isotropy irreducible if the representation of $L$ in the tangent space at the origin is irreducible, see 
\cite{Ga}.
Remark that 
all symmetric spaces of simple compact Lie groups,
e.g., the sphere with the special orthogonal group,
are isotropy  irreducible.
\begin{corollary} \label{isotrop}
If $X$ is isotropy irreducible then we have equalities in Theorem \ref {H-inequalities} and Corollary \ref{H-inequalities*}.
\end{corollary}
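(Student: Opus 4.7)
The plan is to reduce the corollary to the equality cases of the Alexandrov--Fenchel and geometric-mean inequalities, and to show that isotropy irreducibility forces all the convex bodies involved to be mutually proportional.

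I fix an origin $x_0 \in X = K/L$, an invariant Riemannian metric on $X$, and set $A_i = {\mathcal E}_i(x_0)$. As in the proof of Theorem \ref{H-inequalities}, $K$-invariance of the data combined with (\ref{m-density}) rewrites every mixed $F$-volume occurring in (\ref{mv.inequalities}) and in Corollary \ref{H-inequalities*} as a classical mixed volume ${\rm V}_n(A_{i_1},\ldots,A_{i_n})$ times the constant ${\rm vol}(X)$. It therefore suffices to verify that these classical mixed volumes give equality in the corresponding Alexandrov--Fenchel and geometric-mean inequalities.

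By construction each $A_i$ is a centrally symmetric $L$-invariant ellipsoid, associated to the $L$-invariant positive semidefinite form $g_{i,x_0}$. Since $L$ acts irreducibly on $T_{x_0}$, the kernel of $g_{i,x_0}$ is either $\{0\}$ or all of $T_{x_0}$. In the latter case ${\mathcal E}_i$ is identically the zero section, every mixed volume involving it vanishes, and the inequalities read $0 = 0$; so I may assume every $g_{i,x_0}$ is positive definite. I then invoke Schur's lemma for a real irreducible representation of a compact group: the commuting division algebra over $\mathbb R$ is $\mathbb R$, $\mathbb C$, or $\mathbb H$, and in each case the only elements self-adjoint with respect to an invariant Euclidean inner product are the real scalars. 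Hence the space of $L$-invariant symmetric bilinear forms on $T_{x_0}$ is one-dimensional, and consequently $A_i = \lambda_i A_0$ for some fixed $L$-invariant ellipsoid $A_0 \subset T_{x_0}^*$ and positive scalars $\lambda_1,\ldots,\lambda_n$.

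For such homothetic bodies one has ${\rm V}_n(A_{i_1},\ldots,A_{i_n}) = \lambda_{i_1}\cdots\lambda_{i_n}\cdot{\rm vol}(A_0)$. Substituting this into (\ref{mv.inequalities}) makes both sides equal to $\lambda_1^2\cdots\lambda_{n-2}^2\lambda_{n-1}^2\lambda_n^2\cdot{\rm vol}(A_0)^2$, and substituting into the inequality of Corollary \ref{H-inequalities*} makes both sides equal to $\lambda_1^n\cdots\lambda_n^n\cdot{\rm vol}(A_0)^n$; in either case equality holds. The only delicate step is the Schur-lemma argument: to treat the complex and quaternionic types one must note that any invariant orthogonal complex structure or quaternionic unit on $T_{x_0}$ is skew-adjoint with respect to an invariant inner product, leaving only the real multiples of the identity as self-adjoint invariant operators.
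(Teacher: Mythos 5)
Your proof is correct and takes essentially the same route as the paper, which simply observes that the ellipsoids $A_i$ are balls (hence pairwise homothetic) with respect to the invariant metric and that the mixed volume then factorizes as the volume of the unit ball times the product of radii. You merely spell out the Schur-lemma argument that lies behind this observation and handle the degenerate case $g_{i,x_0}=0$, both of which the paper leaves implicit.
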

\begin{proof} The ellipsoids $A_i$ are balls and their mixed volume is the volume of the unit ball multiplied by the product of 
radii.
\end{proof}

\subsection{Zeros of Laplacian eigenfunctions}\label{Laplace}
In 2003, V.I.Arnold proposed to apply topological invariants to the study of the zero set
of $k\le n$ eigenfunctions of the Laplace operator, see
\cite{Ar1}, Problem 2003--10, p.174. 
He suggested that suitable invariants can be estimated, as in the classical Courant's theorem \cite{CH}, in terms of
the numbers of the corresponding eigenvalues. 

Let $\Delta $ be the Laplace operator on a compact Riemannian manifold $X$ and 
$$H(\lambda) = \{f\in C^\infty(X,{\mathbb R})\ \vert \ \Delta(f) + \lambda f = 0\}$$ 
the eigenspace of $\Delta $
with eigenvalue $\lambda $, considered with $L^2$ metric. Put
$${\mathfrak M}(\lambda _1, \ldots, \lambda _n) = {\mathfrak M}_X(H(\lambda _1), \ldots, H(\lambda _n)),\  \
{\mathfrak M}(\lambda ) = {\mathfrak M}(\lambda,\ldots,\lambda).\ 
$$
If $X$ is a homogeneous space of a compact Lie group and the metric is invariant then
\begin{equation}\label{upper}
{\mathfrak M}(\lambda) \le {2\over \sigma _n n^{n/2}}{\lambda }^{n/2}{\rm vol}\, (X),
\end{equation}
where $\sigma _n$ is the volume of the $n$-dimensional sphere of radius 1, see \cite{AK2}.
In this case we have only one $F$-ellipsoid whose shape with respect to the Riemannian metric does not vary with $x \in X$.
Its semi-axes $\beta _i$ satisfy $\sum \beta _i^2 = \lambda $, see \cite{AK2}. Using this fact, one can
easily deduce (\ref {upper}) from Theorem \ref {first}. Furthermore, 
for isotropy irreducible homogeneous spaces Theorem \ref{first} shows that (\ref{upper}) turns into equality obtained in \cite{AK1},
\cite{Gi}.

The right hand side in (\ref{upper}) coincides, up to a coefficient depending only on $n$,
with the leading term of the asymptotics for the number of $\lambda $ in the celebrated Weyl's law, see \cite{Iv}.
Therefore ({\ref{upper}) can be considered as a step in the direction of Arnold's problem.

We also have the following inequalities of another type.
\begin {theorem} \label{lower} If $X$ is a homogeneous space of a compact Lie group with an invariant Riemannian metric then
$${\mathfrak M}(\lambda_1, \ldots, \lambda _n) ^2 \ge {\mathfrak M}(\lambda _1, \ldots, \lambda_{n-1}, \lambda_{n-1})\cdot{\mathfrak M}(\lambda _1, \ldots, \lambda _n, \lambda _n)$$
and
$${\mathfrak M}(\lambda _1, \ldots, \lambda _n) \ge ({\mathfrak M}(\lambda _1) \cdot \ldots \cdot {\mathfrak M}(\lambda _n))^{1\over n}.$$
\end{theorem}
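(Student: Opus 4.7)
The plan is to recognize Theorem \ref{lower} as a direct application of Theorem \ref{H-inequalities} and Corollary \ref{H-inequalities*} with $V_i = H(\lambda_i)$. The substance of the argument is to verify that the eigenspaces fit the hypotheses of those invariance results.

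Let $K$ be the compact Lie group acting transitively on $X$. Since the Riemannian metric is $K$-invariant, the Laplace--Beltrami operator $\Delta$ commutes with the $K$-action, so each eigenspace $H(\lambda)$ is a finite-dimensional $K$-invariant subspace of $C^\infty(X,{\mathbb R})$. The $L^2$ inner product comes from the invariant Riemannian volume density, hence is $K$-invariant, and so its restriction to each $H(\lambda_i)$ provides an invariant Euclidean structure. Assuming each eigenspace is nonzero (otherwise the inequalities are trivial), pick any $f \in H(\lambda_i)$ and any point $x_0 \in X$ where $f(x_0) \ne 0$. Transitivity of $K$ lets us move $x_0$ to an arbitrary $x$ via some $k \in K$; then $k \cdot f \in H(\lambda_i)$ is nonzero at $x$, so the nonvanishing condition (\ref{value}) holds for each $V_i = H(\lambda_i)$.

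With all hypotheses of Theorem \ref{H-inequalities} verified, its conclusion specialized to these $V_i$ is precisely the first inequality of Theorem \ref{lower}, and Corollary \ref{H-inequalities*} yields the second. The only potential obstacle is a careful use of the invariance assumption — specifically, noting that invariance of the Riemannian metric yields simultaneously the $K$-invariance of both $H(\lambda_i)$ and the $L^2$ inner product on it; once that is observed, the proof is immediate.
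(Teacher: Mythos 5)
Your proposal is correct and takes essentially the same route as the paper, which simply cites Theorem \ref{H-inequalities} and Corollary \ref{H-inequalities*}; you additionally spell out why the hypotheses of those statements (invariance of $H(\lambda_i)$ and of the $L^2$ metric, and condition (\ref{value})) are satisfied, which the paper leaves implicit.
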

\begin{proof} The first inequality follows from Theorem \ref{H-inequalities}, the second one from Corollary \ref{H-inequalities*}. 
\end{proof}
\noindent
As an application of our results,
we obtain another proof of the following theorem due to V.M.Gichev, see \cite {Gi}, Thm.\,2, where
one has to take $X=M,\, l = r$ and $t_i = 0$ in (31).
\begin{theorem}\label{Gi}
If $X$ is an isotropy  irreducible homogeneous space of a compact Lie group 
then
$${\mathfrak M}(\lambda _1, \ldots, \lambda _n) = 
{2\over \sigma _n n^{n/2}} \sqrt{\lambda _1 \cdot \ldots \cdot \lambda _n} \, {\rm vol}\, (X).$$
\end{theorem}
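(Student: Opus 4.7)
The plan is to invoke Theorem \ref{first} with $V_i = H(\lambda_i)$ and make the right-hand side completely explicit using isotropy irreducibility. Fix a base point $x_0 = eL \in X = K/L$ and an invariant Riemannian metric. Since $H(\lambda_i)$ and its $L^2$ scalar product are $K$-invariant, the pulled-back quadratic form $g_{i,x_0}$ on $T_{x_0}X$ is $L$-invariant; isotropy irreducibility together with Schur's lemma forces $g_{i,x_0}$ to be a scalar multiple of the invariant metric, so the $F$-ellipsoid ${\mathcal E}_i(x_0) \subset T^*_{x_0}X$ is a round ball $B(r_i)$. The radius is pinned down by the trace relation $\sum_j \beta_j^2 = \lambda_i$ recalled in Subsection \ref{Laplace}: since all $n$ semi-axes coincide, $r_i = \sqrt{\lambda_i/n}$.

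Next, Euclidean balls are additive under Minkowski sum, so $\mu_1 B(r_1) + \ldots + \mu_n B(r_n) = B\bigl(\sum_j \mu_j r_j\bigr)$, whose volume equals $\omega_n \bigl(\sum_j \mu_j r_j\bigr)^n$ with $\omega_n = {\rm vol}(B^n)$. Reading off the coefficient of $\mu_1 \cdots \mu_n$ gives ${\rm V}_n(B(r_1), \ldots, B(r_n)) = \omega_n \, r_1 \cdots r_n$. Invariance of the $F$-ellipsoids makes the pointwise mixed volume constant on $X$, so formula (\ref{m-density}) yields
$${\rm V}_n^F({\mathcal E}_1, \ldots, {\mathcal E}_n) \;=\; \omega_n \, n^{-n/2}\sqrt{\lambda_1 \cdots \lambda_n}\cdot {\rm vol}(X).$$

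Inserting this into Theorem \ref{first} reduces Theorem \ref{Gi} to the scalar identity $n!\, \omega_n/(2\pi)^n = 2/\sigma_n$. Writing $\omega_n = \pi^{n/2}/\Gamma(n/2+1)$ and $\sigma_n = 2\pi^{(n+1)/2}/\Gamma((n+1)/2)$ and using the Legendre duplication formula $\Gamma((n+1)/2)\Gamma(n/2+1) = 2^{-n}\sqrt{\pi}\, n!$ completes the verification. I expect no substantive obstacle: the paper has already extracted the key geometric input (${\mathcal E}_i(x_0)$ is a ball whose squared semi-axes sum to $\lambda_i$) in Subsections \ref{Hodge} and \ref{Laplace}; the only remaining work is the reconciliation of the $(2\pi)^{-n}$ normalization in Theorem \ref{first} with the Crofton-type factor $2/\sigma_n$ appearing in the conclusion.
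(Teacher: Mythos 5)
Your proof is correct, and it takes a more direct route than the paper. The paper instead combines Corollary \ref{isotrop}, which gives ${\mathfrak M}(\lambda_1,\ldots,\lambda_n)^n = {\mathfrak M}(\lambda_1)\cdots{\mathfrak M}(\lambda_n)$, with the observation that (\ref{upper}) is an equality in the isotropy irreducible case, and reads off the formula by taking $n$-th roots. You instead compute the mixed symplectic volume of the $F$-ellipsoids once and for all: Schur's lemma forces each ${\mathcal E}_i(x_0)$ to be a round ball, the trace relation $\sum_j\beta_j^2=\lambda_i$ from \cite{AK2} fixes the radius to $\sqrt{\lambda_i/n}$, balls are Minkowski-additive so ${\rm V}_n(B(r_1),\ldots,B(r_n))=\omega_n r_1\cdots r_n$, and then (\ref{m-density}) plus Theorem \ref{first} give the answer modulo the constant identity $n!\,\omega_n/(2\pi)^n = 2/\sigma_n$, which is indeed exactly the relation ${\rm v}_n\sigma_n = 2(2\pi)^n/n!$ already used in the Crofton example in Section~\ref{Crofton}. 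Both arguments rest on the same geometric inputs (isotropy irreducibility, the semi-axis relation, Theorem~\ref{first}); yours is a single self-contained computation, while the paper's reuses two already-stated consequences and so is shorter on the page but less transparent about where the constant $2/(\sigma_n n^{n/2})$ comes from.
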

\begin{proof} By Corollary \ref{isotrop}   
$${\mathfrak M}(\lambda _1, \ldots, \lambda _n)^n 
=  {\mathfrak M}(\lambda _1)\cdot \ldots \cdot 
{\mathfrak M}(\lambda _n). $$
Also, as we pointed out above, one has
the equality in (\ref{upper}). This completes the proof. 
\end{proof}

\section{Ring of normal densities}\label{ring}
\subsection{Normal densities and normal measures}\label{normal}
Let $V$ be a finite-dimensional real vector space. In considerations involving metric pro\-per\-ties,
we tacitly assume that $V$ has a Euclidean structure
and any vector subspace $U \subset V$ carries the induced metric.  
We will consider translation invariant $k$-densities on $V$. Any of them can be viewed as an even
positively homogeneous function of degree 1 on the cone of decomposable $k$-vectors of $V$.
A density of highest degree coinsides, up to a scalar factor, with the Lebesgue measure on $V$.
Therefore a $k$-density $\delta $ on a vector subspace $U\subset V $ of dimension $ k$
is the Lebesgue measure on $U$ multiplied by some constant $c$.
Since $\delta $ is translation invariant, we can push it to any shift $v_0 + U$.
Given a compact set $B \subset v_0 + U$, we will write $\delta (B)$ for the Lebesgue measure of $B - v_0$
multiplied by $c$.  

We denote by ${\rm Gr}_a(k, V)$ the Grassmanian of affine subspaces of codimension $k$ in $V$
and identify the affine space ${\rm Gr}_a(0,V)$ with the given vector space $V$. The Grassmanian of
vector subspaces of dimension $k$ is denoted by ${\rm Gr}(k,V)$.

\begin{definition}\label{normal measure}
A translation invariant Borel measure on ${\rm Gr}_a(k,V)$, finite on compact sets,
is called a normal measure.
\end{definition}

\noindent {\it Remark.} Normal measures
are Crofton measures as defined in \cite{F}. The pull-back operation for normal measures
introduced below coincides with the corresponding operation for Crofton measures considered by
D.Faifman and T.Wannerer, see Appendix B in \cite{F}.

\medskip
\noindent
For $D\subset V$ we put
\begin{equation}\label{J}
{\mathcal J}_{k,D} = \{H \in {\rm Gr}_a (k, V) \ \vert \ H\cap  D
\ne \emptyset\} 
\end{equation}
Let $\mu _k $ be a normal measure on ${\rm Gr}_a(k,V)$ and
let $\Pi_\xi \subset V$ be a $k$-dimensional 
parallelotope generated by $\xi_1, \ldots, \xi_k\in V$. Define a function on decomposable
$k$-vectors by
$$ \chi _k (\mu _k ) (\xi _1\wedge \ldots \wedge \xi_k) = \mu _k ({\mathcal J}_{k,\Pi_\xi}).$$ 

\begin{proposition} $\chi _k(\mu_k)$ is a translation invariant $k$-density on the affine space $V$.
\end{proposition}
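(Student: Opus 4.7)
The plan is to derive an explicit integral representation for $\chi_k(\mu_k)$ that makes it manifest that the definition depends only on the decomposable $k$-vector $\omega = \xi_1\wedge\ldots\wedge\xi_k$, and from which continuity, homogeneity of degree one, and translation invariance all follow at once. The main tool is the disintegration of $\mu_k$ over the compact Grassmannian ${\rm Gr}(n-k,V)$ induced by the projection $p$ that sends an affine subspace of codimension $k$ to its parallel vector subspace. The fiber $p^{-1}(W)$ is the coset space $V/W$, a $k$-dimensional affine space, and translation invariance of $\mu_k$ combined with uniqueness of Haar measure forces the fiber measure over each $W$ to be a non-negative multiple of Lebesgue measure $\lambda_W$ on $V/W$. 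Consequently, there is a finite Borel measure $\tilde\mu_k$ on ${\rm Gr}(n-k,V)$ with
$$\mu_k(\mathcal A) = \int_{{\rm Gr}(n-k,V)} \lambda_W\bigl(p^{-1}(W)\cap \mathcal A\bigr)\, d\tilde\mu_k(W)$$
for every Borel set $\mathcal A \subset {\rm Gr}_a(k,V)$.

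Applying this to $\mathcal A = \mathcal J_{k,\Pi_\xi}$ yields the key formula. An affine subspace $v+W$ meets $\Pi_\xi$ iff $\pi_W(v)\in\pi_W(\Pi_\xi)$, where $\pi_W\colon V\to V/W$ is the quotient map, so the inner integrand reduces to the Lebesgue measure of the (possibly degenerate) parallelotope $\pi_W(\Pi_\xi)\subset V/W$. Identifying the top exterior power $\Lambda^k(V/W)$ with ${\mathbb R}$ via the Euclidean volume form, this equals $|\pi_W(\xi_1)\wedge\ldots\wedge\pi_W(\xi_k)|$, which by linearity of the induced map $\pi_W^{\wedge k}$ on exterior powers is $|\pi_W^{\wedge k}\omega|$. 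Hence
$$\chi_k(\mu_k)(\omega) = \int_{{\rm Gr}(n-k,V)} |\pi_W^{\wedge k}\omega|\, d\tilde\mu_k(W).$$
The right-hand side depends only on $\omega$, which in particular verifies that the defining formula does yield a well-defined function on the cone of decomposable $k$-vectors.

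From this representation the three remaining density properties are immediate. Homogeneity $\chi_k(\mu_k)(t\omega) = |t|\chi_k(\mu_k)(\omega)$ follows from linearity of $\pi_W^{\wedge k}$. Continuity on the cone of decomposable $k$-vectors follows from dominated convergence, using finiteness of $\tilde\mu_k$ together with the uniform pointwise bound $|\pi_W^{\wedge k}\omega|\le\|\omega\|$ coming from $\|\pi_W\|\le 1$. Translation invariance as a density on the affine space $V$ is inherited directly from that of $\mu_k$, since shifting the base point of $\Pi_\xi$ translates $\mathcal J_{k,\Pi_\xi}$ inside ${\rm Gr}_a(k,V)$ and hence preserves its $\mu_k$-measure.

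The main technical step is setting up the disintegration and verifying finiteness of $\tilde\mu_k$. For the former I would pick a continuous section of $p$, namely $W\mapsto W$ viewed as the codimension-$k$ affine subspace through the origin, to trivialize ${\rm Gr}_a(k,V)$ as a $V/W$-bundle over ${\rm Gr}(n-k,V)$ and then apply uniqueness of Haar measure fiber by fiber. Finiteness of $\tilde\mu_k$ follows from the hypothesis that $\mu_k$ is finite on compact sets by testing against $\mathcal J_{k,B}$ for a fixed closed Euclidean ball $B$, since $W\mapsto\lambda_W(\pi_W(B))$ is a strictly positive continuous function on the compact Grassmannian ${\rm Gr}(n-k,V)$.
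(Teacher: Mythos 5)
Your proposal is correct, and it takes a genuinely different route from the paper's. The paper argues locally in each fixed $k$-dimensional subspace $U\subset V$: it observes that $D\mapsto\mu_k(\mathcal{J}_{k,D})$, restricted to $k$-dimensional $D\subset U$, is a countably additive translation-invariant measure on $U$, hence a multiple of Lebesgue measure on $U$; from there $\mu_k(\mathcal{J}_{k,L\cdot D})=|\det L|\,\mu_k(\mathcal{J}_{k,D})$ for $L\colon U\to U$, which is precisely the density property. You instead work globally, disintegrating $\mu_k$ over the compact base ${\rm Gr}(n-k,V)$ along the ``direction'' map and invoking uniqueness of Haar measure fiber by fiber to obtain the explicit representation $\chi_k(\mu_k)(\omega)=\int_{{\rm Gr}(n-k,V)}|\pi_W^{\wedge k}\omega|\,d\tilde\mu_k(W)$, from which well-definedness on decomposable $k$-vectors, homogeneity, continuity (via dominated convergence plus the contraction bound $\|\pi_W^{\wedge k}\|\le 1$), and translation invariance are all immediate. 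Your approach buys two things: first, it handles the degeneracy issue cleanly, since codimension-$k$ planes non-transversal to the span of $\omega$ contribute zero automatically (as $|\pi_W^{\wedge k}\omega|=0$), whereas the paper's countable-additivity claim implicitly relies on the fact that non-transversal planes form a $\mu_k$-null set, a fact it does not establish at this point in the text. Second, the integral formula you derive is essentially the cosine-transform representation that the paper only introduces later (Proposition \ref{cos-transf*}), so you are making explicit at the outset a structural fact the paper reuses repeatedly. The cost is a heavier measure-theoretic tool (fiber disintegration, plus the small amount of care needed to upgrade ``a.e.\ Lebesgue fibers'' to the displayed formula), compared to the paper's short, elementary argument.
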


\begin{proof} For a $k$-dimensional vector subspace $U \subset V$
the function $\mu_k({\mathcal J}_{k, D})$ on $k$-dimensional domains $D \subset U$
gives rise to a countably additive and translation invariant measure.
This measure coincides with the Lebesgue measure of $U$ up to a factor
depending continuously on $U$. Thus, for any linear operator $L :U \to U$ one has
$\mu _k({\mathcal J}_{k,L\cdot D}) = \vert{\rm det }(L)\vert \, \mu _k({\mathcal J}_{k,D})$. 
In particular, for a linear operator in the subspace generated by $\xi_i$
and for $D = \Pi_\xi$
we get
$\chi_k(\mu _k)(L\cdot \xi_1 \wedge \ldots \wedge L\cdot\xi_k) =\vert {\rm det }(L)\vert \, \chi_k( \mu _k)
(\xi_1\wedge\ldots\wedge\xi_k)$.
\end{proof}
\noindent
\begin{definition}\label{normal density}
A linear combination of densities of the form $\chi_k(\mu_k)$ is called a {\it normal $k$-density} on $V$.
The space of normal $k$-densities is denoted by ${\mathfrak n}_k$.
\end{definition}
\noindent
Given a linear map $F:U \to V$ of real vector spaces and a normal measure $\mu _k$ on ${\rm Gr}_a(k,V)$
we want to define the pull-back $F^*\mu_k$ on ${\rm Gr}_a(k,U)$.
Let $K = {\rm Ker}\,F$ and let
$${\rm Gr}_a(k,U;K) = \{H \in {\rm Gr}_a(k,U) \ \vert \ H\supset u+K \ \ {\rm for\ some}\ u\in U\}.$$ 
The condition defining ${\rm Gr}_a(k,U;K)$ means that
the vector subspace associated to $H$ contains $K$. Clearly,
${\rm Gr}_a(k,U;K)$ is closed in ${\rm Gr}_a(k,U)$ and translation invariant.
For $T\in {\rm Gr}_a(k,U;K)$
and ${\mathcal T}\subset {\rm Gr}_a(k,U;K)$ put
$$F_*(T) = \{G\in {\rm Gr}_a(k,V) \vert  F(T) = G\cap F(U)\},\ \
F_*({\mathcal T}) = \cup _{T \in {\mathcal T}}F_*(T).$$ 
\begin{proposition}\label{*mu*}
The function ${\mathcal T} \mapsto \mu_k(F_*(\mathcal T)) $ is a normal measure on ${\rm Gr}_a(k,U)$
supported on ${\rm Gr}_a(k,U;K)$.
\end{proposition}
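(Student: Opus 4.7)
The plan is to verify four properties of the set function $\mathcal{T} \mapsto \mu_k(F_*(\mathcal{T}))$: that it is concentrated on $\mathrm{Gr}_a(k,U;K)$, that it is countably additive, that it is translation invariant, and that it is finite on compact sets. The first three items are essentially formal consequences of the definitions and the translation invariance of $\mu_k$; the real work is in the fourth.

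First I would dispose of the support statement. If $T \in \mathrm{Gr}_a(k,U)$ has direction subspace $D$ not containing $K$, then $\dim F(T) = \dim D - \dim(D \cap K) > (\dim U - k) - \dim K = \dim F(U) - k$. Any $G \in F_*(T)$ must contain $F(T)$ while having codimension $k$ in $V$, and the condition $G \cap F(U) = F(T)$ then fails on dimensional grounds, so $F_*(T) = \emptyset$ in this case. Next, on $\mathrm{Gr}_a(k,U;K)$ the assignment $T \mapsto F(T)$ is a continuous bijection onto $\mathrm{Gr}_a(k, F(U))$ with inverse $S \mapsto F^{-1}(S)$; writing $\pi : \mathrm{Gr}_a(k,V) \to \mathrm{Gr}_a(k,F(U))$ for the partial intersection map $G \mapsto G \cap F(U)$, one has $F_*(\mathcal{T}) = \pi^{-1}(F(\mathcal{T}))$. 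Distinct $T$'s then yield disjoint $F_*(T)$'s, so $F_*$ preserves disjoint unions, and countable additivity of $\mu_k \circ F_*$ is inherited from that of $\mu_k$. For translation invariance I would use $F(u_0 + T) = F(u_0) + F(T)$, which says that $F_*(u_0 + \mathcal{T})$ is the $F(u_0)$-translate of $F_*(\mathcal{T})$ inside $\mathrm{Gr}_a(k,V)$; translation invariance of $\mu_k$ then finishes the matter.

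The hard part will be finiteness on compact sets, because the individual fibers $\pi^{-1}(S)$ of the intersection map are themselves non-compact: they parameterize all codimension-$k$ extensions of $S$ from $F(U)$ to $V$. Given a compact $\mathcal{K} \subset \mathrm{Gr}_a(k,U)$, the intersection $\mathcal{T} = \mathcal{K} \cap \mathrm{Gr}_a(k,U;K)$ is compact, and so is its image $F(\mathcal{T})$ in $\mathrm{Gr}_a(k, F(U))$. I plan to bypass the non-compactness of fibers by picking, continuously in $S \in F(\mathcal{T})$, the foot of the perpendicular from $0$ onto $S$; this produces a compact set $B \subset F(U) \subset V$ that every member of $F(\mathcal{T})$ meets, and hence every $G \in F_*(\mathcal{T})$ meets. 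Thus $F_*(\mathcal{T}) \subseteq \mathcal{J}_{k,B}$, and the right-hand side has finite $\mu_k$-measure by the normal-measure property applied to the compact set $B \subset V$.
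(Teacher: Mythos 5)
Your proposal reaches the right conclusion, and the essential bookkeeping (countable additivity via $F_* = \pi^{-1}\circ(T\mapsto F(T))$ being a $\sigma$-algebra homomorphism, and translation invariance via $F(u_0+T)=F(u_0)+F(T)$) matches what the paper does in spirit. But you take a genuinely different route to the key finiteness estimate. The paper first factors $F$ through $U/K$ to reduce to the case of an embedding $U\hookrightarrow V$, and then argues by sequential compactness in $\mathrm{Gr}_a(k,V)$ that both $\mathrm{cl}\,F_*(\mathcal T)$ and the boundary $\mathrm{cl}\,F_*(\mathcal T)\setminus F_*(\mathcal T)$ are compact; this simultaneously gives Borel measurability (as a difference of two compacts) and finiteness (via compactness of the closure). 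You instead avoid limits entirely: you identify $F_*$ as a continuous partial preimage, which handles measurability, and then produce by hand a compact $B\subset F(U)$ (feet of perpendiculars from the origin, over the compact set $F(\mathcal T)\subset\mathrm{Gr}_a(k,F(U))$) with $F_*(\mathcal T)\subset\mathcal J_{k,B}$, which is bounded and hence of finite $\mu_k$-measure. Your approach is more explicit and arguably cleaner; the paper's is shorter but tacit about measurability.

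One genuine gap, though a harmless one: your dimension count for the support claim is wrong. If $T\in\mathrm{Gr}_a(k,U)$ has direction $D\not\supset K$, you conclude that $G\cap F(U)=F(T)$ ``fails on dimensional grounds'' because $\dim F(T)>\dim F(U)-k$. That only forces a failure when $F$ is surjective; when $\dim F(U)<\dim V$ the equation can still be solvable. Concretely, take $U=V=\mathbb R^2$, $F(u_1,u_2)=(u_1,0)$, $K=\{0\}\times\mathbb R$, $k=1$, and $T=\{u_2=0\}$: then $F(T)=F(U)$ and $G=F(U)$ satisfies $G\cap F(U)=F(T)$, so $F_*(T)$ is nonempty. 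What saves the situation is that these exceptional $G$ all contain the fixed subspace $F(U)$, and $\mu_k(\{G\ :\ G\supset W\})=0$ is exactly the observation the paper makes in the proof of Proposition \ref{n-pull-back}; that would give $\mu_k(F_*(T))=0$ even on your extended domain. But strictly speaking the paper never needs this, because $F_*$ is \emph{defined} only for $T\in\mathrm{Gr}_a(k,U;K)$, so the pullback measure vanishes off $\mathrm{Gr}_a(k,U;K)$ automatically and your whole support paragraph is unnecessary. Delete it (or replace the dimension argument with the measure-zero argument) and the rest of the proof stands.
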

\begin{proof} 
Write $F$ as the composition $$U \ {\buildrel F^\prime \over \to }\ U/K \hookrightarrow V.$$
Then $F^\prime_*(T) = 
\{F^\prime (T)\}$, and so
we get the diffeomorphism 
$${\rm Gr}_a(k,U;K)\ \longrightarrow \ {\rm Gr}_a(k,U/K),\quad  T\mapsto F^\prime_*(T).$$
Thus our statement is reduced to the case of embedding.
We may assume that $U$ is a vector subspace in $V$, $F$ is
the identity map, and $T\in {\rm Gr}_a(k,U)$.
Then $F_*(T) =\{G \in {\rm Gr}_a(k,V)\ \vert \ G\cap U = T\}.$

The closure ${\rm cl}\{F_*(T)\}$  of $F_*(T)$ in ${\rm Gr}_a(k,V)$ consists of all affine subspaces $G$ of codimension $k$,
such that $G\cap U \supset T$. This closure is obviously compact.
Now, if $G_j \in {\rm Gr}_a(k,V),\ T_j \in {\rm Gr}_a(k, U)$ and $G_j\cap U \supset T_j$,
then the convergence of $\{T_j\}$ implies that $\{G_j\}$ has a convergent subsequence. Moreover, if 
$G_j \cap U \ne T_j$ for all $j$, 
then a limit point $G$ of $\{G_j\}$ satisfies ${\rm codim}_U\, G\cap U >k$.
This shows that ${\rm cl}\{F_*(\mathcal T)\}$ and ${\rm cl}\{F_*(\mathcal T)\} -
F_*(\mathcal T)$ are compact if ${\mathcal T}$
is compact. On the other hand, the correspondence ${\mathcal T} \mapsto  F_*({\mathcal T})$,
where ${\mathcal T}$ is any subset of ${\rm Gr}_a(k, U)$, is an 
injective homomorphism of $\sigma$-algebras.
Since the image of a compact set in ${\rm Gr}_a(k, U)$ is the difference 
of two compact sets
in ${\rm Gr}_a(k,V)$,  it follows that the function 
${\mathcal T} \mapsto \mu_k(F_*({\mathcal T}))$ is a correctly defined Borel measure. The translation
invariance is obvious. 
 \end{proof}

\begin{definition}\label{measure pull-back}
The measure defined in Proposition \ref{*mu*} is called the pull-back of $\mu_k$ and is denoted by $F^*\mu_k$.
For ${\mathcal T}\subset {\rm Gr}_a(k,U;K)$ one has
$(F^*\mu_k)({\mathcal T}) = \mu_k(F_*({\mathcal T}))$.

\end{definition}

\noindent Let $F:Y\to Z$ be a differentiable map. Recall that the pull-back of a $k$-density $\nu $ on $Z$
is a $k$-density $F^*\nu $ on $Y$ defined by $F^*\nu (\xi_1\wedge
\ldots \wedge \xi_k) = \nu (dF_p(\xi_1)\wedge \ldots \wedge dF_p(\xi_k))$ for  any $p\in Y $
and $\xi_1, \ldots ,
\xi_k \in T_pY$.

\begin{proposition}\label{n-pull-back}For a linear map $F:U 
\to V$ and a normal measure $\mu _k$ on ${\rm Gr}_a(k,V)$ one has
$\chi_k(F^*\mu_k) = F^*(\chi_k(\mu_k))$.
\end{proposition}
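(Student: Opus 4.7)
The plan is to factor $F$ through its rank and verify the commutation $\chi_k \circ F^* = F^* \circ \chi_k$ separately for projections and embeddings. Writing $F = \iota \circ F'$ with $F' : U \to U/K$ the quotient map and $\iota : U/K \hookrightarrow V$ the induced embedding, one has $F_* = \iota_* \circ F'_*$ on $\mathrm{Gr}_a(k, U; K)$, so both the measure and the density pull-backs are contravariantly functorial. It therefore suffices to settle the projection case $F = F'$ and the embedding case $F = \iota$ separately.

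The projection case is straightforward. For $H \in \mathrm{Gr}_a(k, U; K)$ the linear direction of $H$ contains $K$, which forces the equivalence
$$H \cap \Pi_\xi \neq \emptyset \iff F'(H) \cap F'(\Pi_\xi) \neq \emptyset;$$
the nontrivial implication lifts an intersection point by an element of $K$, which already lies in the direction of $H$. Under the diffeomorphism $F'_* : \mathrm{Gr}_a(k, U; K) \to \mathrm{Gr}_a(k, U/K)$ from the proof of Proposition \ref{*mu*}, this identifies $\mathcal{J}^U_{k,\Pi_\xi} \cap \mathrm{Gr}_a(k,U;K)$ with $\mathcal{J}^{U/K}_{k,F'(\Pi_\xi)}$, and the equality follows on unwinding the definitions of $\chi_k$ and of $F'^*$.

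For the embedding $\iota : W \hookrightarrow V$ (write $W := U/K$), the desired identity reduces to $\mu_k(\iota_*(\mathcal{J}^W_{k,\Pi_\eta})) = \mu_k(\mathcal{J}^V_{k,\iota(\Pi_\eta)})$. A direct computation of $\iota_*$ gives
$$\iota_*(\mathcal{J}^W_{k,\Pi_\eta}) = \{G \in \mathrm{Gr}_a(k,V) : G + \iota(W) = V \text{ and } G \cap \iota(\Pi_\eta) \neq \emptyset\},$$
so the two sides differ only by the set $N = \{G : G + \iota(W) \neq V,\ G \cap \iota(\Pi_\eta) \neq \emptyset\}$ of non-transverse affine subspaces that still meet $\iota(\Pi_\eta)$; it remains to prove $\mu_k(N) = 0$. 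For this I would disintegrate $\mu_k$ over the direction map $p : \mathrm{Gr}_a(k,V) \to \mathrm{Gr}(n-k,V)$: by translation invariance, the restriction of $\mu_k$ to each fiber $p^{-1}(D) \cong V/D$ is a multiple of Lebesgue measure, giving $\mu_k = \int \ell_D\, d\nu(D)$ for a Borel measure $\nu$ on $\mathrm{Gr}(n-k,V)$. When $D + \iota(W) \neq V$, the image $\pi_D(\iota(W))$ is a proper subspace of the $k$-dimensional space $V/D$, so $\pi_D(\iota(\Pi_\eta)) \subset \pi_D(\iota(W))$ has Lebesgue measure zero in $V/D$. Since the condition $G \cap \iota(\Pi_\eta) \neq \emptyset$ translates to $\pi_D(G) \in \pi_D(\iota(\Pi_\eta))$, the fiber measure of $N$ vanishes for every non-transverse $D$, and integrating against $\nu$ yields $\mu_k(N) = 0$.

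The main obstacle is justifying this disintegration rigorously: one needs the standard structure theorem that every translation-invariant Borel measure on an affine Grassmannian is a $\nu$-integral of Lebesgue measures on direction fibers. This rests on the fact that the translation group $V$ acts freely and transitively on each fiber $V/D$, forcing the fiber measure to be Lebesgue up to a $\nu$-measurable scalar. Once this decomposition is in hand, the vanishing of $\mu_k(N)$ is a fiber-by-fiber Fubini computation, and the rest of the proof is elementary bookkeeping.
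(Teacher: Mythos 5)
Your route is genuinely different from the paper's. You factor $F$ as $\iota\circ F'$ (projection followed by embedding, exactly as in the proof of Proposition \ref{*mu*}), handle the two cases separately, and for the embedding reduce the claim to a measure-zero statement which you prove by disintegrating $\mu_k$ over the direction Grassmannian ${\rm Gr}(n-k,V)$ and a fiberwise Lebesgue computation. The paper instead works with a general linear $F$ directly: both sides unwind to $\mu_k(F_*({\mathcal J}_{k,\Pi}))$ and $\mu_k({\mathcal J}_{k,F\cdot\Pi})$, and the difference --- the non-transverse affine planes meeting $F\cdot\Pi$, together with the degenerate case $\dim F\cdot\Pi<k$ --- is killed by the same ``disjoint translates'' device already used in Lemma \ref{long product}: translating a bad subspace by a suitable summable sequence of vectors produces pairwise disjoint copies of the bad set inside a compact region, whence countable additivity forces the measure to vanish. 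That trick is elementary and self-contained; your disintegration requires a genuine structure theorem for translation-invariant Borel measures on affine Grassmannians, which, as you yourself flag, would have to be argued.

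There is also a concrete gap in the projection step. You assert that after identifying ${\mathcal J}^U_{k,\Pi_\xi}\cap{\rm Gr}_a(k,U;K)$ with ${\mathcal J}^{U/K}_{k,F'(\Pi_\xi)}$ via the diffeomorphism $F'_*$, ``the equality follows on unwinding the definitions.'' That is true only when $F'\xi_1,\dots,F'\xi_k$ are linearly independent. If $\xi_1,\dots,\xi_k$ are independent in $U$ but some nontrivial combination lies in $K$ --- which happens whenever $K\ne 0$ --- then $F'\xi_1\wedge\dots\wedge F'\xi_k=0$, so $F'^*(\chi_k(\mu'))(\xi_1\wedge\dots\wedge\xi_k)=0$ by homogeneity of the density, whereas the set identification only gives $\chi_k(F'^*\mu')(\xi_1\wedge\dots\wedge\xi_k)=\mu'({\mathcal J}^{U/K}_{k,F'(\Pi_\xi)})$ with $F'(\Pi_\xi)$ now a lower-dimensional set. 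Showing this last quantity vanishes is yet another measure-zero fact, of precisely the kind you defer to the embedding step; it is not a matter of unwinding definitions. The paper addresses it explicitly (``$\mu_k({\mathcal J}_{k,F\cdot\Pi})=0$ if $\dim F\cdot\Pi<k$''). Your disintegration would in fact handle it, but it must be invoked in the projection step as well, not just for the embedding.
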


\begin{proof} Let $\xi_1, \dots, \xi _k \in U$ and $\Pi = \Pi_\xi$. Then
$$\chi_k(F^*\mu_k)(\xi_1\wedge \ldots \wedge \xi_k) = F^*\mu_k({\mathcal J}_{k,\Pi})
= \mu_k(F_*({\mathcal J}_{k,\Pi}))
$$
 by Definition \ref{measure pull-back}. On the other hand, 
$$F^*(\chi _k(\mu_k)) (\xi_1\wedge \ldots \wedge \xi_k))= \chi_k(\mu_k)(F\xi_1\wedge\ldots\wedge F\xi_k) = \mu_k({\mathcal J}_{k,F\cdot\Pi})$$
by the definition of pull-back of a density.
It follows from the definition of $F_*$ that $F_*({\mathcal J}_{k, \Pi}) \subset {\mathcal J}_{k,F\cdot \Pi}$.
We have to prove that the value of $\mu_k$ on these two sets is the same.
If ${\rm dim}\,F\cdot \Pi = k$ then the difference
${\mathcal J}_{k, F\cdot \Pi} - F_*({\mathcal J}_{k,\Pi})$ is formed  by subspaces which intersect $F\cdot \Pi$
non-transversally. A small shift of such a subspace  is another subspace, whose intersection with $F\cdot \Pi$
is also non-transversal and disjoint from the initial one. Since $\mu_k$ is translation invariant, it follows from countable additivity 
of $\mu _k$ that 
$\mu_k({\mathcal J}_{k, F\cdot \Pi} - F_*({\mathcal J}_{k,\Pi}) )= 0$.
The same argument shows that 
$$\mu_k (\{G\in {\rm Gr}_a(k,V) \, \vert \, G \supset W\}) = 0$$ for
an affine subspace $W \subset V$ of arbitrary dimension and, finally, that 
 $\mu _k ({\mathcal J}_{k,F.\Pi}) = 0$ if 
${\rm dim}\,F\cdot \Pi < k$.
\end{proof}
Let 
${\mathfrak m}_k$ be the vector space, whose elements are differences of normal measures on ${\rm Gr}_a(k,V)$,
where $k \le n = {\rm dim}\,V$. We will define a product of normal measures
$\mu _p$ on ${\rm Gr}_a(p,V)$ and $\mu _q$ on ${\rm Gr}_a(q,V)$ as a certain normal measure
on ${\rm Gr}_a(p+q, V)$. This 
extends to the product ${\mathfrak m}_p \times {\mathfrak m}_q \to {\mathfrak m}_{p+q}$, where we put $\mu_p\mu _q = 0$
if $p+q >n $,
giving the structure of a ring to the graded space
$${\mathfrak m} = {\mathfrak m}_0\oplus {\mathfrak m}_1 \oplus \ldots \oplus {\mathfrak m}_n.$$

Let $p+q \le n,\ G \in {\rm Gr}_a(p,V), H \in {\rm Gr}_a(q, V)$.
The pair $(G,H)$ is called degenerate if ${\rm codim}\,(G\cap H) \ne p+q $.
The set of degenerate pairs is denoted by ${\mathcal D}_{p,q}$.
For a non-degenerate pair $(G,H)$ write $P_{p,q}(G,H) = G \cap H$.
Thus we have the map
$$P_{p,q}: ({\rm Gr}_a(p,V) \times {\rm Gr}_a(q,V))\setminus {\mathcal D}_{p,q} \ \ \to {\rm Gr}_a(p+q, V).$$
Remark that translations of $V$, acting simultaneously on both factors, leave ${\mathcal D}_{p,q}$ stable
and commute with $P_{p,q}$.
For an affine subspace $H \subset V$ let $d(H)$ be the distance from
the origin to $H$ with respect to some Euclidean metric.
A subset $D \subset {\rm Gr}_a(p,V)$ is said to be bounded if $d(H) < R$ for some $R > 0$ and for all $H\in D$.
Clearly, the notion of boundedness does not depend on the choices of origin and metric. In fact,
$D$ is bounded if and only if $D$ is relatively compact.

\begin{lemma}\label{bounded} If $D \subset {\rm Gr}_a(p+q, V)$ is bounded
then the images of $D$ under the projection mappings of $P_{p,q}^{-1}(D)$ onto ${\rm Gr}_a(p,V)$ and ${\rm Gr}_a(q,V)$
are also bounded.
\end{lemma}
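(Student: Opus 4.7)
The plan is to use the simple observation that the distance $d(H)$ of an affine subspace from the origin is realized by a closest point, and any point lying in an intersection $G \cap H$ automatically gives an upper bound on both $d(G)$ and $d(H)$. The boundedness of $D$ will then transfer directly to the factors.

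More precisely, I would proceed as follows. First, recall that for any non-empty closed affine subspace $H \subset V$ the function $x \mapsto |x|$ attains its infimum $d(H)$ at a unique point $x_H \in H$ (the orthogonal foot of the perpendicular from the origin onto $H$). Next, fix $R > 0$ such that $d(L) < R$ for every $L \in D$; such an $R$ exists since $D$ is bounded by hypothesis.

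Now take an arbitrary pair $(G, H) \in P_{p,q}^{-1}(D)$. By definition of $P_{p,q}$, the pair is non-degenerate and $L := G \cap H \in D$, so in particular $L$ is a non-empty affine subspace. Let $x \in L$ be the closest point to the origin, so $|x| = d(L) < R$. Since $x \in L \subset G$, the definition of $d(G)$ as an infimum gives $d(G) \leq |x| < R$, and symmetrically $d(H) \leq |x| < R$. Thus the images of $P_{p,q}^{-1}(D)$ under the two projections are contained in the bounded subsets $\{G : d(G) < R\}$ and $\{H : d(H) < R\}$ of the respective Grassmannians.

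I do not foresee a real obstacle here: the lemma is essentially the trivial inequality $d(G), d(H) \leq d(G \cap H)$, valid because the intersection is contained in each factor. The only thing to be careful about is that one uses a genuine point of $G \cap H$ (guaranteed by non-degeneracy of $(G,H)$), which is what allows the passage from a bound on $d(G \cap H)$ to bounds on $d(G)$ and $d(H)$.
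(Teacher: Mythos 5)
Your proof is correct and is essentially the same as the paper's: one observes that a point of $G\cap H$ realizing $d(G\cap H)<R$ lies in both $G$ and $H$, so $d(G),d(H)<R$. You merely spell out the step that the paper leaves implicit.
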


\begin{proof}
Let $L \in D$ and $d(L) <R$. Assume that $L = G\cap H$,
where $G \in {\rm Gr}_a(p, V),\, H \in {\rm Gr}_a (q,V)$. 
Then $d(G)< R$ and $d(H) < R$.
\end{proof}
\noindent
For a bounded domain $D \subset {\rm Gr}_a(p+q, V)$ put
$$(\mu_p\mu_q) (D) = (\mu _p \times \mu_q)(P_{p,q}^{-1}(D))$$
and note that this is finite by Lemma \ref{bounded}.
If $\{D_i\}$ is a decreasing sequence of bounded domains
then the sequence $\{P_{p,q}^{-1}(D_i)\}$ is also decreasing. Moreover, if
$\cap_iU_i = \emptyset$ then $\cap _i P_{p,q}^{-1}(D_i) = \emptyset $.
Since $\mu _p, \mu_q$ are countably additive, $\mu_p\mu_q$ is also
countably additive. Thus $\mu_p\mu_q$ extends to a Borel measure, which is translation invariant
by construction. The resulting normal measure on ${\rm Gr}_a(p+q, V)$ is again denoted by
$\mu_p\mu_q$. 
\begin{lemma}\label{long product} Suppose $p_1+\ldots+p_k\le n$ and let $\mu_i$
be a normal measure on ${\rm Gr}_a(p_i, V), \ i=1, \ldots, n$. Denote by ${\mathcal D}_{p_1, \ldots, p_k}$ 
the subset of ${\rm Gr}_a(p_1, V) \times \ldots \times {\rm Gr}_a(p_k,V)$ formed by all $k$-tuples $(G_1,\ldots,G_k)$,
such that ${\rm codim}\, (G_1\cap\ldots\cap G_k) < p_1+\ldots+p_k$. Then $$(\mu_1\times\ldots\times\mu_k)
({\mathcal D}_{p_1,\ldots,p_k}) = 0.$$
\end{lemma}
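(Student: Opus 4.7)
The plan is to reduce the lemma, by induction on $k$, to the following key fact: for any fixed affine subspace $W\subset V$ of codimension $q$ and any normal measure $\mu$ on ${\rm Gr}_a(p,V)$ with $p+q\le n$, the set $\{G\in{\rm Gr}_a(p,V): G\cap W\ne\emptyset\text{ and }{\rm codim}\,(G\cap W)<p+q\}$ has $\mu$-measure zero.

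To prove this key fact I would disintegrate $\mu$ over the linear Grassmannian ${\rm Gr}(p,V)$. Every translation-invariant Borel measure on ${\rm Gr}_a(p,V)$ that is finite on compacta admits a representation $\mu=\int_{{\rm Gr}(p,V)}\lambda^L\,d\rho(L)$, in which $\rho$ is a finite measure on ${\rm Gr}(p,V)$ and, on each fiber $\{G:\text{vector part of }G=L\}\cong L^\perp$, the conditional measure $\lambda^L$ is a scalar multiple of Lebesgue measure. Fix $L$ and write $L_W$ for the vector part of $W$. If $L+L_W=V$, then every $G$ with vector part $L$ meets $W$ in an affine subspace of codimension exactly $p+q$, so such $G$ contribute nothing. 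If $L+L_W\subsetneq V$, then the condition $G\cap W\ne\emptyset$ amounts to requiring the translation parameter in $L^\perp$ to lie in an affine subspace of codimension ${\rm codim}\,(L+L_W)\ge 1$; this locus has Lebesgue measure zero in $L^\perp$, hence $\lambda^L$-measure zero. Integrating over $\rho$ finishes the key fact.

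With the key fact in hand, the lemma follows by induction on $k$ and Fubini. The case $k=1$ is vacuous since every $G_1\in{\rm Gr}_a(p_1,V)$ has codimension exactly $p_1$. For the inductive step, it suffices to bound $(\mu_1\times\cdots\times\mu_k)({\mathcal D}_{p_1,\ldots,p_k}\cap(B_1\times\cdots\times B_k))$ for arbitrary bounded Borel sets $B_i$, on which each $\mu_i$ is finite. Split this set according to whether $(G_1,\ldots,G_{k-1})\in{\mathcal D}_{p_1,\ldots,p_{k-1}}$: the part where it is, has measure zero by the inductive hypothesis together with Fubini (using $\mu_k(B_k)<\infty$); on the complement either $G_1\cap\cdots\cap G_{k-1}=\emptyset$ (then the full intersection is still empty and the tuple lies outside ${\mathcal D}_{p_1,\ldots,p_k}$) or it equals an affine subspace $W$ of codimension $p_1+\cdots+p_{k-1}$. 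In this last case the key fact, applied to $W$ and to $\mu_k$, shows that the $G_k$-slice is $\mu_k$-null, and a final Fubini closes the argument.

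The main obstacle is the disintegration step: to carry it out cleanly one has to invoke that translation-invariant $\sigma$-finite Borel measures on a vector space are multiples of Haar/Lebesgue and verify that the decomposition of $\mu$ along the fibers of ${\rm Gr}_a(p,V)\to{\rm Gr}(p,V)$ is jointly measurable, so that Fubini applies fiberwise. Once this is in place, the remainder is a straightforward dimension count in each fiber together with two applications of Fubini.
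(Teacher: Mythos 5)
Your induction on $k$ via Fubini is exactly the skeleton of the paper's argument, and the ``key fact'' you isolate --- that for a fixed affine $W$ of codimension $q$, the set of $H$ meeting $W$ non-transversally is $\mu$-null --- is precisely what the paper reduces to (there it is the set $\mathcal D_G$). The two proofs diverge only in how this key fact is established, and there the difference is real.

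The paper avoids disintegration altogether. It shows $\mu_k(\mathcal D_G)=0$ by a purely elementary translation argument with a secondary induction on $q=\mathrm{codim}\,G$: translate $G$ by a summable sequence $\epsilon_i$; for $q=1$ the sets $\mathcal D_{\epsilon_i+G}$ are pairwise disjoint, and for $q>1$ their pairwise overlaps sit inside $\mathcal D_{\mathbb R\epsilon+G}$ (a lower-codimension degenerate set, hence null by the inner induction). Restricting to a bounded window, translation invariance gives $\sum_i\mu_k(\mathcal D_G\cap U)\le\mu_k(U')<\infty$, forcing $\mu_k(\mathcal D_G\cap U)=0$. This uses nothing beyond translation invariance, countable additivity, and finiteness on compacta.

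Your route instead invokes a structure theorem: every normal measure decomposes as $\mu=\int_{\mathrm{Gr}(p,V)}\lambda^L\,d\rho(L)$ with $\lambda^L$ a multiple of Lebesgue on each fiber $L^\perp$, after which the dimension count in each fiber (the set of valid translates sits in a proper affine subspace of $L^\perp$ whenever $L+L_W\subsetneq V$) does the work in one stroke. The fiberwise count itself is correct, and conceptually this is cleaner --- no secondary induction on $q$. But the disintegration is genuinely a theorem, not a formality: one must produce the (finite) base measure $\rho$ despite the pushforward of $\mu$ to $\mathrm{Gr}(p,V)$ being infinite on every set of positive mass, and verify joint measurability so that your three Fubini applications are licit. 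You flag this yourself as the main obstacle, and rightly so; it is exactly the machinery the paper's translation trick is designed to avoid. With a citation to, or a short proof of, the structure theorem for translation-invariant Radon measures on affine Grassmannians, your argument is complete and provides a legitimate alternative proof.
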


\begin{proof} For $k = 1$ the assertion is trivial. Let $k\ge 2 $,
$q = p_1+\ldots+p_{k-1}$.
For $G \in {\rm Gr}_a(q,V)$ put
$${\mathcal D}_G = \{H\in{\rm Gr}_a(p_k, V) \ \vert \ {\rm codim}\, (H\cap G) < p_k + q\}.$$
Assume by induction that $(\mu_1\times\ldots\times\mu_{k-1})({\mathcal D}_{p_1, \ldots, p_{k-1}}) = 0$.
Then, by Fubini's theorem,  it suffices to prove that $\mu_k({\mathcal  D}_G )= 0$.
Now, $\mu_k$ is translation invariant, so we have
$\mu _k ({\mathcal D}_{\epsilon + G}) = \mu_k({\mathcal D}_G)$ for any $\epsilon \in V$. 
Assume first that $q=1$. Then ${\mathcal D}_G$ consists of all those $H \in {\rm Gr}_a(p_k,V)$
which are contained in $G$.
For a generic sequence ${\epsilon _i}$ the non-transversality condition implies ${\mathcal D}_{\epsilon _i +G}
\cap {\mathcal D}_{\epsilon _j +G} = \emptyset $. Choose $\epsilon _i$ so that the series $\sum \epsilon _i$
is normally convergent. Then $\cup\,{\mathcal D}_{\epsilon _i +G}$ is relatively compact.
Since $\mu _k$ is countably additive, we conclude
that $\mu _k ({\mathcal D}_G) = 0$.
Assume now that $q>1$. Choose $\epsilon $ so that $\epsilon +G \ne G$ and put $\epsilon _i = t_i\epsilon$, 
where $t_i\ne t_j$ and $\sum \vert t_i\vert < \infty $. Then ${\mathcal D}_{\epsilon _i + G}\cap 
{\mathcal D}_{\epsilon _j + G} \subset {\mathcal D}_{{\mathbb R}\epsilon + G}$, hence
$\mu_k({\mathcal D}_{\epsilon_i+G }\cap{\mathcal D}_{\epsilon _j +G})=0$  by induction on $q$. 
Also, $\cup_i{\mathcal D}_{\epsilon _i +G}$ is relatively compact.
Applying countable additivity of the measure to the increasing sequence of finite unions
$\cup_{i\le j}{\mathcal D}_{\epsilon_i+G}$,
we obtain our assertion. 
\end{proof}

\begin{corollary} Let $D \subset {\rm Gr}_a(p_1+\ldots+p_k,V)$ be a bounded domain and let
$${\mathcal T}_D =\{(G_1,\ldots,G_k)\in \prod_i{\rm Gr}_a(p_i,V)\ \vert \ D\cap G_1\cap \ldots \cap G_k \ne \emptyset\}.$$
The product of normal measures is associative and
$$(\mu_1\cdot \ldots \cdot \mu_k)(D) = (\mu_1 \times \ldots \times \mu_k)({\mathcal T}_D).$$
\end{corollary}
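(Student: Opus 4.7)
The plan is to prove both assertions --- associativity and the explicit integral formula --- simultaneously by induction on $k$, using Fubini's theorem to exchange orders of integration and Lemma \ref{long product} to discard the various intermediate degenerate strata.

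For the base case $k=2$, the claim is essentially the definition of the binary product $\mu_1\mu_2$: the set $\mathcal{T}_D$ differs from $P_{p_1,p_2}^{-1}(D)$ only inside the degenerate locus $\mathcal{D}_{p_1,p_2}$, which has $(\mu_1\times\mu_2)$-measure zero by Lemma \ref{long product}. So the formula and the definition coincide.

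For the inductive step $k\ge 3$, I would pick an arbitrary bracketing of $\mu_1\cdot\ldots\cdot\mu_k$. Its outermost binary operation has the form $\nu_I\cdot\nu_J$ for some partition $\{1,\ldots,k\}=I\sqcup J$ with $0<|I|,|J|<k$, where $\nu_I$ and $\nu_J$ are iterated products over the two subfamilies. By the inductive hypothesis, both $\nu_I$ and $\nu_J$ are unambiguously defined and already satisfy the explicit formula. Setting $p_I=\sum_{i\in I}p_i$ and $p_J=\sum_{j\in J}p_j$, the definition of the binary product combined with Fubini gives
\[
(\nu_I\cdot\nu_J)(D)=\int_{{\rm Gr}_a(p_I,V)}\nu_J(D_L)\,d\nu_I(L),\qquad D_L:=\{M\in{\rm Gr}_a(p_J,V):L\cap M\in D\}.
\]
A small verification is required here: $D_L$ must be bounded so that $\nu_J(D_L)$ is finite. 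This follows because $d(M)\le d(L\cap M)$ whenever $L\cap M\ne\emptyset$, so $D_L\subset\{d<R\}$ whenever $D\subset\{d<R\}$.

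Next, I would apply the inductive formula to $\nu_J(D_L)$, substitute into the integral, and then apply it once more to the outer $\nu_I$-integration, interchanging orders of integration by Fubini and invoking Lemma \ref{long product} at every stage to pass freely between ``the intersection of a subfamily lies in the target'' and the corresponding incidence set. This leads to
\[
(\nu_I\cdot\nu_J)(D)=(\mu_1\times\ldots\times\mu_k)(\mathcal{T}_D).
\]
Since the right-hand side depends only on the unordered collection $\{\mu_1,\ldots,\mu_k\}$, it is independent of the chosen partition and bracketing; thus associativity and the explicit formula follow simultaneously.

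The main obstacle I anticipate is the bookkeeping of intermediate degenerate sets --- sub-tuples $(G_i)_{i\in I'}$ whose intersection has smaller-than-expected codimension --- which arise at each step of the double induction but are invisible in the statement. Lemma \ref{long product} is precisely the tool that shows these contribute nothing to any of the product measures involved, so what could otherwise be a delicate measure-theoretic issue reduces to careful but essentially routine bookkeeping.
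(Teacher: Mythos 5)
Your proposal is correct, and it spells out exactly what the paper leaves implicit: the corollary is stated without proof, being treated as a consequence of Lemma \ref{long product} together with the definition of the binary product, and your induction on $k$ with Fubini to reorganize the iterated integrals and Lemma \ref{long product} to discard the intermediate degenerate strata is precisely that implicit argument, carefully unpacked. The boundedness check for the slices $D_L$ is the same observation as in Lemma \ref{bounded}.
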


\begin{lemma}\label{ideal} The subspace $I = {\rm Ker}(\chi_0) \oplus \ldots \oplus {\rm Ker }(\chi_n)$ 
is a homogeneous ideal of the ring $\mathfrak m$.
\end{lemma}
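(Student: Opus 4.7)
The direct-sum form of $I$ makes it a homogeneous subspace automatically, so only the ideal property needs proof. By bilinearity and the grading of ${\mathfrak m}$, the claim reduces to showing that whenever $\mu \in \ker(\chi_p) \subset {\mathfrak m}_p$ and $\nu \in {\mathfrak m}_q$ with $p+q \le n$, one has $\chi_{p+q}(\mu\nu) = 0$; the case $p+q > n$ is vacuous since $\mu\nu = 0$ by convention. My plan is to evaluate $\chi_{p+q}(\mu\nu)$ on an arbitrary parallelotope $\Pi_\xi$, rewrite via Fubini, and recognise the integrand as the density $\chi_p(\mu)$ applied to a $p$-dimensional slice of $\Pi_\xi$.

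I would first record the commutativity $\mu\nu = \nu\mu$, which is immediate from $P_{p,q}(G,H) = G \cap H = P_{q,p}(H,G)$ together with the symmetry of the product measure under swapping factors. Unpacking the definitions,
\[
\chi_{p+q}(\mu\nu)(\xi_1\wedge\ldots\wedge\xi_{p+q}) = (\mu\nu)({\mathcal J}_{p+q,\Pi_\xi}) = (\nu \times \mu)(P_{q,p}^{-1}({\mathcal J}_{p+q,\Pi_\xi})),
\]
where $P_{q,p}^{-1}({\mathcal J}_{p+q,\Pi_\xi})$ is the set of pairs $(H,G)$ with $G \cap H \cap \Pi_\xi \ne \emptyset$. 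Its $H$-section is precisely ${\mathcal J}_{p,\,\Pi_\xi\cap H}$, so Fubini gives
\[
\chi_{p+q}(\mu\nu)(\xi_1\wedge\ldots\wedge\xi_{p+q}) = \int_{{\rm Gr}_a(q,V)} \mu({\mathcal J}_{p,\,\Pi_\xi\cap H})\, d\nu(H).
\]

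By Lemma \ref{long product}, for $\nu$-almost every $H$ the slice $\Pi_\xi \cap H$ is a genuinely $p$-dimensional compact convex subset of some $p$-dimensional affine subspace $W_H \subset V$. For such $H$, the set-function $D \mapsto \mu({\mathcal J}_{p,D})$ on Borel subsets of $W_H$ is translation-invariant and countably additive, hence equals $c(W_H) \cdot {\rm vol}_{W_H}(\cdot)$, where $c(W_H)$ is the very constant determining $\chi_p(\mu)$ on $W_H$; this extends from parallelotopes to arbitrary compact convex sets by the standard approximation argument used when establishing that $\chi_p(\mu)$ is a density. The hypothesis $\chi_p(\mu) = 0$ forces every $c(W_H)$ to vanish, while for the complementary null set of degenerate $H$ with $\dim(\Pi_\xi \cap H) < p$ the reasoning at the end of the proof of Proposition \ref{n-pull-back} gives $\mu({\mathcal J}_{p,\,\Pi_\xi\cap H}) = 0$ as well. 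Hence the integrand vanishes $\nu$-a.e., $\chi_{p+q}(\mu\nu) \equiv 0$, and $\mu\nu \in I$. The only delicate point is this extension of the density identity from parallelotopes to arbitrary compact convex slices, which I expect to be routine rather than a substantive obstacle.
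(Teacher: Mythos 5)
Your argument is correct and essentially unwinds the paper's one-line proof ("Fubini's theorem implies $(\mu_p\mu_q)({\mathcal T}_{p+q,\Pi})=0$"): the Fubini decomposition, the recognition of the $H$-section as ${\mathcal J}_{p,\Pi_\xi\cap H}$, and the observation that $\chi_p(\mu)=0$ kills the constant of proportionality on each $p$-dimensional slice. One small imprecision: the degenerate locus you must dispose of is not the set of $H$ with $\dim(\Pi_\xi\cap H)<p$ (that subcase is already absorbed into the ``$c(W_H)\cdot\mathrm{vol}=0$'' argument, since lower-dimensional slices have Lebesgue measure zero in their $p$-plane) but rather the set of $H$ whose intersection with the ambient $(p+q)$-plane of $\Pi_\xi$ is non-transversal, i.e.\ has dimension $>p$. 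Lemma~\ref{long product} as stated concerns products of measures on affine Grassmanians, not the intersection with a fixed affine subspace, so it does not apply verbatim; what you really need is the shift-and-disjointness argument that appears \emph{inside} the proof of Lemma~\ref{long product} (the claim $\mu_k(\mathcal{D}_G)=0$ for a fixed $G$), which indeed gives the required $\nu$-null set. With that pointer corrected, the proof is complete and agrees with the paper's.
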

\begin{proof} If $\mu_p \in {\mathfrak m}_p, \chi _p (\mu _p ) = 0$ and  $\mu_q\in{\mathfrak m}_q$
then Fubini's theorem implies $(\mu_p\mu_q)({\mathcal T}_{p+q, \Pi}) = 0$ for any $(p+q)$-dimensional
parallelotope $\Pi$. Therefore ${\chi _{p+q}}(\mu_p \mu_q) = 0$.
\end{proof} \noindent
Let ${\mathfrak n} = {\mathfrak n}_0\oplus {\mathfrak n}_1\oplus \ldots \oplus {\mathfrak n}_n$ denote the graded space of normal densities. Consider the linear map $\chi = \oplus {\chi_i} : {\mathfrak m} \to {\mathfrak n}$ of graded spaces.

\begin{corollary} \label{multiplication}There is a unique structure of a graded ring on ${\mathfrak n}$, such that $\chi $ is a ring homomorphism.
In the notations of Proposition \ref{n-pull-back} the pull-back operations on measures and densities
are ring homomorphisms.
\end{corollary}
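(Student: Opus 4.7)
The corollary has two parts: the existence and uniqueness of a graded ring structure on $\mathfrak{n}$ making $\chi$ multiplicative, and the assertion that the pull-backs on measures and on densities are ring homomorphisms. The first part is essentially immediate from the preceding lemma. Since $I={\rm Ker}\,\chi$ is a homogeneous ideal of $\mathfrak{m}$ by Lemma \ref{ideal}, the quotient $\mathfrak{m}/I$ carries a graded ring structure, and $\chi$ factors as the composition of the projection $\mathfrak{m}\to\mathfrak{m}/I$ with a graded linear isomorphism $\mathfrak{m}/I\to\mathfrak{n}$ (surjectivity of $\chi$ onto $\mathfrak{n}$ is built into Definition \ref{normal density}). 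Transporting the product via this isomorphism gives a graded ring structure on $\mathfrak{n}$ for which $\chi$ is tautologically multiplicative. Uniqueness is forced: any such product must satisfy $\chi(\mu_p)\cdot\chi(\mu_q)=\chi(\mu_p\mu_q)$, so surjectivity of $\chi$ together with bilinearity determines the product on all of $\mathfrak{n}$.

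For the pull-back assertion, the pivotal input is Proposition \ref{n-pull-back}, which says $\chi\circ F^*=F^*\circ\chi$. Granted this, once I know that pull-back of measures is multiplicative, the computation
$$F^*\bigl(\chi(\mu_p)\cdot\chi(\mu_q)\bigr)=F^*\chi(\mu_p\mu_q)=\chi\bigl(F^*(\mu_p\mu_q)\bigr)=\chi\bigl(F^*\mu_p\cdot F^*\mu_q\bigr)=F^*\chi(\mu_p)\cdot F^*\chi(\mu_q),$$
together with bilinear extension, gives the result for densities. So the real content is to verify $F^*(\mu_p\mu_q)=F^*\mu_p\cdot F^*\mu_q$ at the level of measures.

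For this I would factor $F=\iota\circ F'$ with $F':U\to U/K$ surjective ($K={\rm Ker}\,F$) and $\iota:U/K\hookrightarrow V$ an embedding. The surjective case is tautological via the diffeomorphism ${\rm Gr}_a(p+q,U;K)\cong{\rm Gr}_a(p+q,U/K)$ used in the proof of Proposition \ref{*mu*}, under which the intersection map $P_{p,q}$ on $U$ is transported to the one on $U/K$. In the embedding case, for a bounded Borel set $D\subset{\rm Gr}_a(p+q,U)$, both sides unravel to $(\mu_p\times\mu_q)$-measures of incidence sets of pairs $(G,H)\in{\rm Gr}_a(p,V)\times{\rm Gr}_a(q,V)$: on one side, pairs with $G\cap H\cap U\in D$; on the other, pairs such that $(G\cap U,H\cap U)$ has the prescribed codimensions in $U$ and intersects in an element of $D$. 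These two sets coincide outside the locus where some affine intersection in $V$ (either $G\cap U$, $H\cap U$, or $G\cap H$) fails to have the expected codimension. That locus is $(\mu_p\times\mu_q)$-null by Lemma \ref{long product} applied to suitable collections of subspaces in $V$ (the subspace $U$ being held fixed and entering as one factor), so the two measures agree on $D$, hence as normal measures.

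The main obstacle I expect is precisely this last set-theoretic matching: setting up the two incidence correspondences on a common footing and invoking Lemma \ref{long product} via a Fubini argument to discard the degenerate configurations. Everything else is formal bookkeeping with the quotient $\mathfrak{m}/I$ and Proposition \ref{n-pull-back}.
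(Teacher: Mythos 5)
Your structure is exactly right: the first assertion is an immediate consequence of Lemma \ref{ideal} and the surjectivity of $\chi$ built into Definition \ref{normal density}; the density-level pull-back statement follows from the measure-level one via Proposition \ref{n-pull-back}; and the measure-level statement reduces, after factoring $F$ through the quotient by its kernel as in the proof of Proposition \ref{*mu*}, to the surjection and embedding cases. The paper does not write out a proof here, so there is nothing to compare against, but this is what a careful reader would supply.

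One step would need adjusting on execution. In the embedding case the two incidence sets you describe coincide \emph{exactly}, not merely up to a null set: whenever $G \cap H \cap U \in D \subset {\rm Gr}_a(p+q, U)$, its codimension in $U$ is $p+q$, and since one always has ${\rm codim}_U(G\cap U)\le p$, ${\rm codim}_U(H\cap U)\le q$, ${\rm codim}_V(G\cap H)\le p+q$, and ${\rm codim}_U(G\cap H\cap U)$ is bounded both by ${\rm codim}_U(G\cap U)+{\rm codim}_U(H\cap U)$ and by ${\rm codim}_V(G\cap H)$, every one of these inequalities is forced to be an equality. So no measure-zero argument is required, and that is fortunate: Lemma \ref{long product} as stated governs degenerate intersections only among subspaces drawn from translation-invariant (normal) measures, so the fixed subspace $U$ cannot simply be slotted in ``as one factor.'' Dropping the appeal to Lemma \ref{long product} both repairs this and shortens the argument; the rest of your proof stands.
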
 \noindent
Given a manifold $X$, the graded ring of normal densities $\mathfrak n_x$ is defined for every tangent space $T_x$.
A density $\delta $ on $X$ is called normal if $\delta _x \in {\mathfrak n}_x$ for every $x \in X$.
The set of normal densities on $X$ is denoted
by ${\mathfrak n}(X)$. With respect to pointwise multiplication, ${\mathfrak n}(X)$ is a commutative graded algebra
over $C(X)$.  
\begin{theorem}\label{density-manifold} For any differentiable map $F: X \to Y$ 
the pull-back operation $\delta \mapsto F^*\delta$,
$(F^*\delta)_x = (dF_x)^*\delta _{F(x)}$, 
defines a homomorphism ${\mathfrak n}(Y) \to {\mathfrak n}(X)$. 
The assignment $X \to {\mathfrak n}(X)$ is a contravariant functor from the category
of differentiable manifolds to the category of commutative graded rings. For any $\delta \in 
{\mathfrak n}(Y)$ and $f\in C(Y)$ one has $F^*(f\delta) = (f\circ F)\cdot F^*\delta $.

\end{theorem}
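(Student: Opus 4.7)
The strategy is to reduce every claim to the corresponding pointwise statement on tangent spaces, where it has already been established. At each $x \in X$, the differential $dF_x: T_xX \to T_{F(x)}Y$ is a linear map of real vector spaces, so by Corollary \ref{multiplication} applied with $L = dF_x$, the pull-back $(dF_x)^*: {\mathfrak n}(T_{F(x)}Y) \to {\mathfrak n}(T_xX)$ is a homomorphism of graded rings. Defining $F^*\delta$ pointwise by $(F^*\delta)_x = (dF_x)^*\delta_{F(x)}$, we see at once that $(F^*\delta)_x \in {\mathfrak n}_x$ for every $x$.

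The first substantive step is to verify that $F^*\delta$ is actually a density on $X$, i.e.\ that $x \mapsto (F^*\delta)_x$ is continuous. The pointwise formula
$$(L^*\delta)(\xi_1 \wedge \ldots \wedge \xi_k) = \delta(L\xi_1 \wedge \ldots \wedge L\xi_k)$$
is purely algebraic in $L$ and requires no inversion; consequently, it is jointly continuous in $L$, $\delta$, and the $\xi_i$. Combined with continuity of the map $x \mapsto dF_x$ (which holds since $F$ is differentiable) and continuity of the assignment $y \mapsto \delta_y$, this yields continuity of $F^*\delta$. Thus $F^*\delta \in {\mathfrak n}(X)$.

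The remaining assertions are formal. Multiplication on ${\mathfrak n}(X)$ is defined pointwise and $(dF_x)^*$ is a ring homomorphism at each $x$, so $F^*$ is a ring homomorphism:
$$\bigl(F^*(\delta \cdot \delta')\bigr)_x = (dF_x)^*\bigl(\delta_{F(x)} \cdot \delta'_{F(x)}\bigr) = (dF_x)^*\delta_{F(x)} \cdot (dF_x)^*\delta'_{F(x)} = (F^*\delta \cdot F^*\delta')_x.$$
Linearity of $(dF_x)^*$ in $\delta$ together with the pointwise action of $C(Y)$ gives the module formula $F^*(f\delta) = (f\circ F)\cdot F^*\delta$. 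For functoriality, the chain rule $d(G \circ F)_x = dG_{F(x)} \circ dF_x$ combined with the pointwise contravariance of the linear pull-back yields $(G \circ F)^* = F^* \circ G^*$, while $\mathrm{id}^* = \mathrm{id}$ is immediate. The one potentially delicate point—continuity of $F^*\delta$ at points where the rank of $dF_x$ drops—poses no real obstacle precisely because the pull-back is algebraic: the normal density $(dF_x)^*\delta_{F(x)}$ may degenerate, but this degeneration is itself continuous.
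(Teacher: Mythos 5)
Your proposal is correct and takes essentially the same route as the paper: reduce everything to the pointwise statement on tangent spaces via Corollary \ref{multiplication} (which, together with Proposition \ref{n-pull-back}, gives that the linear pull-back $(dF_x)^*$ preserves normality and is a ring homomorphism), and then make the formal verifications of ring-homomorphism, $C(Y)$-linearity, and functoriality. The only thing you elaborate beyond the paper's two-line proof is the continuity of $x \mapsto (F^*\delta)_x$, which the paper leaves implicit; your argument for that point is sound.
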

\begin{proof} 
The pull-back of a normal density is normal by Proposition \ref{n-pull-back}. 
The fact that above map ${\mathfrak n}(Y) \to {\mathfrak n}(X)$ is a ring homomorphism follows from Corollary \ref{multiplication}.
\end{proof}

\subsection{Normal densities and the cosine transform.}\label{cos}
For a Euclidean structure on $V$ let ${\rm vol}_k$ denote
the corresponding Riemannian $k$-density, i.e., the translation invariant $k$-density, whose value
on the $k$-vector $\xi _1 \wedge \ldots \wedge \xi_k$ equals the $k$-dimensional volume of the
parallelotope generated by $\xi_1, \ldots, \xi_k$. Starting with a continuous real function $\phi $
on ${\rm Gr}(k,V)$, we will define a translation invariant $k$-density $\delta _{k,\phi}$ on $V$
and a normal measure ${\mu}_{k,\phi} \in {\mathfrak m}_k$. The function $\phi $ will be called the gauge function
of $\delta _{k,\phi}$ and $\mu _{k,\phi}$.

The density $\delta _{k,\phi}$ is defined by
\begin{equation}\label{k-density}
\delta_{k,\phi}(\xi_1 \wedge \ldots \wedge \xi_k) = \phi(H)\cdot {\rm vol}_k(\xi _1 \wedge \ldots \wedge \xi _k),
\end{equation}
where $H$ is the subspace generated by linearly independent vectors 
$\xi_1, \ldots \xi_k$. Note that any translation invariant $k$-density can be written in this form.
A translation invariant $k$-density is said to be of class $C^\infty$ if the associated gauge function
$\phi : {\rm Gr}(k,V) \to {\mathbb R}$ is of class $C^\infty$. 

In the special case of normal densities, we attach the gauge function to a density
and identify ${\mathfrak n}_k$ with a (non-closed) vector 
subspace of $C({\rm Gr}(k,V))$ considered with
the topology of uniform convergence.
We will sometimes indicate the ambient space and write ${\mathcal J}_{p,C,V}$ 
in place of ${\mathcal J}_{p,C}$ (see (\ref {J})).

\begin{lemma}\label{estimate} Assume that the normal measure $\mu _p \in {\mathfrak m}_p$ is non-negative.
If $C \subset V$ is a cube in some coordinate system then
$$\mu _p({\mathcal J}_{p,C} )\le a(C)\cdot m,$$
where $m$ is the maximum of the gauge function of $\chi _p(\mu_p)$ and the constant $a(C)$ does not depend on $\mu _p$.
\end{lemma}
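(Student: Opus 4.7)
The plan is to disintegrate $\mu_p$ over the projection $\pi: {\rm Gr}_a(p,V) \to {\rm Gr}(p,V)$ sending an affine codimension-$p$ subspace $H$ to $U := H_0^\perp$, where $H_0$ is its linear part. After this is done, both $\mu_p({\mathcal J}_{p,C})$ and the gauge function $\phi$ of $\chi_p(\mu_p)$ will be expressed as integrals against a common non-negative base measure on ${\rm Gr}(p,V)$, and Cauchy's zonotope volume formula applied to $\pi_U(C)$ will convert $\mu_p({\mathcal J}_{p,C})$ into a finite sum of values of $\phi$.

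First I would set up the disintegration. Each fibre $\pi^{-1}(U)$ is canonically identified with $U$ via $u \mapsto u + U^\perp$, and translations of $V$ on this fibre act by translations of $U$. Translation invariance of $\mu_p$ forces its restriction to each fibre to be a non-negative multiple of ${\rm vol}_U$, so a standard disintegration yields a non-negative Borel measure $\tilde\mu_p$ on ${\rm Gr}(p,V)$ with
$$ \mu_p({\mathcal J}_{p,B}) = \int_{{\rm Gr}(p,V)} {\rm vol}_U(\pi_U(B)) \, d\tilde\mu_p(U) $$
for every Borel $B \subset V$, where $\pi_U: V \to U$ is the orthogonal projection. Taking $B = \Pi_\xi$ a unit $p$-cube in a subspace $W \in {\rm Gr}(p,V)$ gives the cosine-transform representation
$$ \phi(W) = \int_{{\rm Gr}(p,V)} |\cos(W,U)| \, d\tilde\mu_p(U), $$
where $|\cos(W,U)|$ denotes the absolute value of the determinant of $\pi_U|_W$ computed in orthonormal bases.

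Second, I apply this to the cube $C$. Let $e_1, \ldots, e_n \in V$ be the edges of $C$, so that $C$ is a translate of $\sum_i [0,e_i]$. The projection $\pi_U(C)$ is a zonotope in the $p$-dimensional space $U$ generated by $\pi_U(e_1), \ldots, \pi_U(e_n)$, and the classical zonotope volume formula gives
$$ {\rm vol}_U(\pi_U(C)) = \sum_{|I|=p} |\det(\pi_U(e_{i_1}), \ldots, \pi_U(e_{i_p}))| = \sum_{|I|=p} {\rm vol}_{W_I}(\Pi_I) \cdot |\cos(W_I, U)|, $$
where $W_I = {\rm span}(e_{i_1}, \ldots, e_{i_p})$, the determinant is taken in $U$, and $\Pi_I$ is the parallelotope spanned by $e_{i_1}, \ldots, e_{i_p}$. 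Integrating against $\tilde\mu_p$ and exchanging the finite sum with the integral yields
$$ \mu_p({\mathcal J}_{p,C}) = \sum_{|I|=p} {\rm vol}_{W_I}(\Pi_I) \cdot \phi(W_I) \le m \cdot \sum_{|I|=p} {\rm vol}_{W_I}(\Pi_I), $$
so one may take $a(C) = \sum_{|I|=p} {\rm vol}_{W_I}(\Pi_I)$, a positive constant depending only on the edges of $C$.

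The only real obstacle is the first step: producing the disintegration measure $\tilde\mu_p$ cleanly and deriving the cosine-transform identity for $\phi$. Everything after that is the Cauchy zonotope identity followed by the pointwise bound $\phi \le m$ and an interchange of a finite sum with an integral.
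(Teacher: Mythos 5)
Your argument is correct and takes a genuinely different, and in fact more informative, route than the paper's. You disintegrate $\mu_p$ over the projection $\mathrm{Gr}_a(p,V)\to\mathrm{Gr}(p,V)$ to get a base measure $\tilde\mu_p$, convert both $\mu_p(\mathcal{J}_{p,C})$ and the gauge function $\phi$ into integrals against $\tilde\mu_p$ (the latter being $\phi(W)=\int\cos(W,U)\,d\tilde\mu_p$), and then invoke the Cauchy--McMullen zonotope volume formula for $\pi_U(C)$. This actually yields an exact identity $\mu_p(\mathcal{J}_{p,C})=\sum_{|I|=p}\mathrm{vol}_{W_I}(\Pi_I)\,\phi(W_I)$, from which the bound with $a(C)=\sum_{|I|=p}\mathrm{vol}_{W_I}(\Pi_I)$ is immediate. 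The paper instead argues by induction on $\dim V$: it writes $\mathcal{J}_{p,C}$ as a union of the $\mathcal{J}_{p,C_i}$ over the facets $C_i$ of $C$, pulls $\mu_p$ back to the coordinate hyperplanes via Proposition~\ref{n-pull-back} and Definition~\ref{measure pull-back}, notes that the pulled-back gauge functions are still bounded by $m$, and applies the inductive hypothesis; the base case $p=n$ is Lebesgue measure. Both are valid, but they buy different things: your version is sharper and conceptually cleaner, while the paper's is elementary and self-contained modulo the pull-back machinery it has already built.

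The one place your argument goes beyond what the paper actually establishes is the disintegration step, and you flag this yourself. Proposition~\ref{cos-transf*} in the paper provides the map from measures $\nu$ on $\mathrm{Gr}(p,V)$ to normal measures $\mu_{p,\nu}$, but never asserts that every non-negative normal measure arises this way. That converse is a genuine (if well-known) structure theorem for translation-invariant Radon measures on affine Grassmannians: one must show the conditional measures on the non-compact fibers $U\cong\mathbb{R}^p$ can be chosen to be Lebesgue measure consistently, for instance via a normalized cutoff $\rho(H)=c\,\rho_0(d(H))$ with $\int_U\rho(u+U^\perp)\,du=1$ for all $U$, or by appeal to Weil's quotient-integral formula for the proper action of the translation group. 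This is standard in integral geometry (and is implicit in the Crofton-measure point of view credited to Faifman in the paper's Remark after Definition~\ref{normal measure}), so it is not a fatal gap, but it is a real ingredient that would need to be cited or proved; once it is in place, the rest of your argument is airtight.
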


\begin{proof} If $p = n ={\rm dim}\, V$ then $\mu _p$ is a Lebesgue measure and the estimate holds true with $a(C)$
being the volume of $C$. In particular, the assertion of the lemma is obvious if $n = 1$. Suppose $n > 1$. Let $W_1, 
\ldots, W_n \subset V$ be the coordinate subspaces of codimension 1 
 and let $C_1,\ldots,C_{2n}$ be the $(n-1)$-dimensional faces of $C$. 
We can assume that the numbering is chosen so that $C_i \subset W_i$
for all $i = 1,\ldots, n$.
Clearly,
$${\mathcal J}_{p,C,V}= \bigcup _{i=1}^{2n}\, {\mathcal J}_{p,C_i,V}.$$
On the other hand, denote by $\mu _p^{(i)}$ the pull-back of $\mu _p$
under the embedding $W_i \to V$. By Definition \ref{measure pull-back} we have
$$\mu_p({\mathcal J}_{p,C_i, V}) = \mu _p^{(i)} ({\mathcal J}_{p,C_i,W_i}),\ i =1, \ldots, n.$$
By Proposition \ref{n-pull-back} the gauge functions of $\chi_p(\mu _p^{(i)})$ do not exceed $m$.
Thus
$$\mu_p({\mathcal J}_{p,C,V}) \le \sum _{i=1}^{2n}\mu _p({\mathcal J}_{p,C_i,V}) = 2\sum_{i=1}^n \mu_p^{(i)}
({\mathcal J}_{p,C_i,W_i})\le 2n \cdot {\rm max}\, a(C_i)\cdot m$$
by induction.
\end{proof}
The following property of the product
${\mathfrak n}_p \times 
{\mathfrak n}_q \to {\mathfrak n}_{p+q}$ will be useful.
 Let $\delta  = \chi_p(\mu_p)$ and $\delta ^\prime = \chi_q(\mu _q)$  be a normal $p$-density
and a normal $q$-density, respectively.

\begin{proposition}\label{continuous} Suppose  $\{\delta _i\}
\subset {\mathfrak n}_p$ and $\{\delta _i^\prime\} \subset {\mathfrak n}_q$
are two sequences of normal densities converging to $\delta $ and
$\delta ^\prime$, respectively. Assume that
$\delta _i^\prime = \chi_q(\mu _{q,i})$, where all normal measures $\mu _{q,i}$ are non-negative.
Then $\delta _i\delta _i^\prime $ tends to $\delta \delta ^\prime$.

\end{proposition}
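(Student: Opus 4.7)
The plan is to expand the gauge function of each product via Fubini's theorem into a concrete integral, and then use sup-norm convergence of the gauge functions together with Lemma \ref{estimate} to control the resulting differences. Fix representations $\delta_i=\chi_p(\mu_{p,i})$, $\delta=\chi_p(\mu_p)$ and (by hypothesis) $\delta_i'=\chi_q(\mu_{q,i})$ with $\mu_{q,i}\ge 0$, $\delta'=\chi_q(\mu_q)$; let $\phi_i,\phi$ on ${\rm Gr}(p,V)$ and $\phi_i',\phi'$ on ${\rm Gr}(q,V)$ be the respective gauge functions. Convergence in ${\mathfrak n}$ amounts to uniform convergence of gauge functions, so in particular $\|\phi_i'\|_\infty$ is uniformly bounded in $i$.

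Fix $H_0\in{\rm Gr}(p+q,V)$ with unit parallelotope $\Pi\subset H_0$. After discarding the $(\mu_{p,i}\times\mu_{q,i})$-null set of degenerate pairs via Lemma \ref{long product}, apply Fubini to the definition of the product:
\[
(\delta_i\delta_i')(H_0)=\int_{{\rm Gr}_a(q,V)}\mu_{p,i}\bigl(\mathcal{J}_{p,\,G_2\cap\Pi,\,V}\bigr)\,d\mu_{q,i}(G_2).
\]
For a.e.\ $G_2$, the cross-section $G_2\cap\Pi$ is a $p$-dimensional polytope lying in an affine $p$-plane whose vector direction $L_{G_2}$ is the intersection of $H_0$ with the vector subspace parallel to $G_2$; Definition \ref{measure pull-back} combined with Proposition \ref{n-pull-back} and translation invariance yields
\[
\mu_{p,i}\bigl(\mathcal{J}_{p,\,G_2\cap\Pi,\,V}\bigr)=\phi_i(L_{G_2})\cdot {\rm vol}_p(G_2\cap\Pi).
\]
Integrating in the opposite order produces the analogous formula for $(\delta\delta_i')(H_0)$ with integrand $\phi_i'(L_{G_1})\cdot{\rm vol}_q(G_1\cap\Pi)$ against $d\mu_p(G_1)$, and similarly for $\delta_i\delta'$ and $\delta\delta'$.

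The decomposition $\delta_i\delta_i'-\delta\delta'=(\delta_i-\delta)\delta_i'+\delta(\delta_i'-\delta')$ now reads
\[
(\delta_i\delta_i'-\delta\delta')(H_0)=I_1(H_0)+I_2(H_0),
\]
with $I_1$ the integral of $(\phi_i-\phi)(L_{G_2})\cdot{\rm vol}_p(G_2\cap\Pi)$ against $d\mu_{q,i}(G_2)$ and $I_2$ the integral of $(\phi_i'-\phi')(L_{G_1})\cdot{\rm vol}_q(G_1\cap\Pi)$ against $d\mu_p(G_1)$. Non-negativity of $\mu_{q,i}$ gives
\[
|I_1(H_0)|\le\|\phi_i-\phi\|_\infty\cdot{\rm vol}_p(\Pi)\cdot\mu_{q,i}(\mathcal{J}_{q,\Pi,V}),
\]
and Lemma \ref{estimate} bounds $\mu_{q,i}(\mathcal{J}_{q,\Pi,V})\le a(\Pi)\,\|\phi_i'\|_\infty$, uniformly in $i$. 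For $I_2$, the fixed signed measure $\mu_p$ has finite total variation on the relatively compact set $\mathcal{J}_{p,\Pi,V}$, so $|I_2(H_0)|\le\|\phi_i'-\phi'\|_\infty\cdot{\rm vol}_q(\Pi)\cdot|\mu_p|(\mathcal{J}_{p,\Pi,V})$. Compactness of ${\rm Gr}(p+q,V)$ makes the geometric constants uniform in $H_0$, and both terms tend to zero in sup norm.

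The main obstacle is the Fubini identity together with the pointwise evaluation of $\mu_{p,i}(\mathcal{J}_{p,\,G_2\cap\Pi,\,V})$: one has to verify that the degenerate $G_2$ (with $\dim(G_2\cap H_0)>p$) carry no $\mu_{q,i}$-mass and that the pull-back of $\mu_{p,i}$ to a transversal affine $p$-plane is Lebesgue measure scaled by the value of $\phi_i$ at the associated vector subspace. Both facts follow from the structural results already established (Lemma \ref{long product}, Definition \ref{measure pull-back}, Proposition \ref{n-pull-back}), but threading them through the product definition requires some care. The non-negativity hypothesis enters only in the estimate of $I_1$; without it, uniform control on $\mu_{q,i}(\mathcal{J}_{q,\Pi,V})$ would not follow from the uniform bound on $\phi_i'$ alone.
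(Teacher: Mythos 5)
Your proof is correct and takes essentially the same route as the paper: it starts from the Fubini expansion of the product of normal densities, uses the decomposition $\delta_i\delta_i'-\delta\delta'=(\delta_i-\delta)\delta_i'+\delta(\delta_i'-\delta')$, controls the second summand by the fixed (finite on bounded sets) measure $\mu_p$, and invokes Lemma~\ref{estimate} together with non-negativity of $\mu_{q,i}$ to bound the measures $\mu_{q,i}(\mathcal{J}_{q,\Pi,V})$ uniformly in $i$. You merely spell out the pointwise gauge-function evaluation of the cross-sections and the uniformity in $H_0$, which the paper leaves implicit.
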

\begin{proof}
 Let $A$ be a compact convex $(p+q)$-dimensional set in $V$, $G$
an arbitrary point in ${\rm Gr}_a(q,V)$ and $A_G = A\cap G$. Then
$$(\delta \delta ^\prime)(A) = \int _{G\in {\rm Gr}_a(q,V)} \delta (A_G) d\mu _q(G)$$
by the definition of the product of normal densities and by Fubini's theorem. Suppose now that $\{\delta _i\}
\subset {\mathfrak n}_p$ and $\{\delta _i^\prime\} \subset {\mathfrak n}_q$
are two sequences of normal densities converging to $\delta $ and
$\delta ^\prime$, respectively. Then $\delta _i(A_G) $ tends to
$\delta (A_G)$ uniformly in $G$ with $A_G$ non-empty, hence
$\delta _i \delta^\prime \to \delta \delta ^\prime$ and, similarly, $\delta \delta _i^\prime \to \delta \delta ^\prime $. Finally,
write $\delta _i 
\delta _i^\prime =  (\delta _i - \delta) \delta _i ^\prime +\delta \delta_i^\prime$. We have to show that the first summand
tends to 0.
Since the sequence of gauge functions of $\chi _q(\mu _{q,i})$ is convergent, Lemma \ref{estimate}
shows that the measures $\mu _{q,i}(\{G\in {\rm Gr}_a(q,V) \vert A_G\ne \emptyset\})$ are bounded
by the same constant. This completes the proof.
\end{proof}

An affine subspace $G \in {\rm Gr}_a(k,V)$ has a unique presentation as the sum $G = h+H^\bot $, where $H \in {\rm Gr}(k,V)$,
$H^\bot $ is the orthogonal complement to $H$ and $h\in H$. The metric on $V$ induces a metric on $H$
and the associated Lebesgue measure on $H$ is denoted by $dh$. The measure $\mu_{k,\phi}$
is given by integration against compactly supported functions. Namely, if $\psi $ is such  function on ${\rm Gr}_a(k,V)$ then
\begin{equation}\label{affine-to-vector}
\int_{{\rm Gr}_a(k,V)}\psi \cdot d\mu _{k,\phi} = \int_{{\rm Gr}(k,V)}\phi(H)\,
 \Bigl (\, \int _H \psi(h+H^\bot)\cdot dh \Bigr )
\cdot dH ,
\end{equation}
where $dH$ is the Haar measure on the Grassmanian.

Recall the definition of the cosine transform. Given $E \in {\rm Gr}(k,V) ,F\in {\rm Gr}(l,V),$ where $k\le l$ ,
let $A \subset E$ be any subset of non-zero volume. The
cosine of the angle between $E$ and $F$ is the ratio of the $k$-dimensional volume of 
the orthogonal projecion of $A $ onto $F$ to
the $k$-dimensional volume of $A$. The ratio is denoted here by ${\rm cos}(E,F)$ (some authors
write $\vert {\rm cos}(E,F)\vert$ in a more classical way). The cosine transform $T_k: C({\rm Gr}(k,V))
\to C({\rm Gr}(k,V))$ is the integral operator
$$T_k(f)(G) = \int_{{\rm Gr}(k,V)}f(H)\,{\rm cos}(H,G)\, dH.$$

\begin{proposition}\label{cos-transf} Let $\Phi = T_k(\phi)$. Then $\delta _{k,\Phi} = 
\chi_k(\mu _{k,\phi})$, i.e., the functions from the image of $T_k$ are gauge
functions of normal densities and the diagram
$$\begin{matrix}
&C({\rm Gr}(k,V)) &\buildrel T_k \over \longrightarrow & {\rm Im}\,T_k & \subset \ C({\rm Gr}(k,V))\cr
&\downarrow &&\downarrow  \cr
&{\mathfrak m}_k &\buildrel {\chi_k}\over \longrightarrow  &  \ {\mathfrak n}_k  &  \cr$$
\end{matrix}$$
commutes, 
where the mappings denoted by vertical arrows attach the measure and, respectively,  the density to a gauge function.
\end{proposition}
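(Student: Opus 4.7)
The plan is to unwind both sides of the claimed equality on an arbitrary decomposable $k$-vector $\xi_1 \wedge \ldots \wedge \xi_k$ with linearly independent $\xi_i$ spanning $E \in {\rm Gr}(k,V)$, and recognize the resulting integral over ${\rm Gr}(k,V)$ as the cosine transform $T_k(\phi)(E)$. Concretely, by Definition \ref{normal density} the left-hand side is
$$\chi_k(\mu_{k,\phi})(\xi_1 \wedge \ldots \wedge \xi_k) = \mu_{k,\phi}(\mathcal{J}_{k,\Pi_\xi}).$$
Applying formula (\ref{affine-to-vector}) to (an approximation of) the indicator function of $\mathcal{J}_{k,\Pi_\xi}$ — which is legitimate because $\Pi_\xi$ is compact, so $\mathcal{J}_{k,\Pi_\xi}$ is itself compact in ${\rm Gr}_a(k,V)$ — I obtain
$$\mu_{k,\phi}(\mathcal{J}_{k,\Pi_\xi}) = \int_{{\rm Gr}(k,V)} \phi(H) \left( \int_H \mathbf{1}_{\mathcal{J}_{k,\Pi_\xi}}(h+H^\perp)\, dh \right) dH.$$

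The key geometric observation is then that $h + H^\perp$ meets $\Pi_\xi$ if and only if the point $h \in H$ lies in the orthogonal projection $\pi_H(\Pi_\xi)$; thus the inner integral equals ${\rm vol}_k(\pi_H(\Pi_\xi))$. By the very definition of ${\rm cos}(E,H)$ recalled just before the proposition,
$${\rm vol}_k(\pi_H(\Pi_\xi)) = {\rm cos}(E,H)\cdot {\rm vol}_k(\Pi_\xi).$$
Pulling ${\rm vol}_k(\Pi_\xi) = {\rm vol}_k(\xi_1 \wedge \ldots \wedge \xi_k)$ out of the outer integral leaves exactly
$$\chi_k(\mu_{k,\phi})(\xi_1 \wedge \ldots \wedge \xi_k) = {\rm vol}_k(\xi_1 \wedge \ldots \wedge \xi_k)\cdot \int_{{\rm Gr}(k,V)} \phi(H)\,{\rm cos}(E,H)\, dH = \Phi(E)\cdot {\rm vol}_k(\xi_1 \wedge \ldots \wedge \xi_k),$$
which matches $\delta_{k,\Phi}$ by (\ref{k-density}). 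Since decomposable $k$-vectors with linearly independent factors are dense in the cone where both sides are defined, and both sides are continuous positively homogeneous functions, this identity gives the required equality of densities. Commutativity of the diagram is then just a restatement: the left vertical arrow sends $\phi$ to $\mu_{k,\phi}$, the right one sends $T_k(\phi)$ to $\delta_{k,T_k(\phi)}$, and we have just shown $\chi_k(\mu_{k,\phi}) = \delta_{k,T_k(\phi)}$.

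The main obstacle I anticipate is not analytic but book-keeping: one must be careful that (\ref{affine-to-vector}) as stated is for compactly supported $\psi$, so strictly one should approximate $\mathbf{1}_{\mathcal{J}_{k,\Pi_\xi}}$ by a decreasing sequence of compactly supported continuous functions and pass to the limit using the finiteness of $\mu_{k,\phi}$ on compact sets (Lemma \ref{estimate}). A second minor point is handling the measure-zero set of $H \in {\rm Gr}(k,V)$ for which $E$ and $H^\perp$ fail to be in general position — but such $H$ form a set where ${\rm cos}(E,H) = 0$ as well, so they contribute nothing to either integral.
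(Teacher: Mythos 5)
Your proof is correct and follows essentially the same route as the paper: evaluate $\chi_k(\mu_{k,\phi})$ on a parallelotope via (\ref{affine-to-vector}), identify the inner integral as the volume of the orthogonal projection $\pi_H(\Pi_\xi)$, rewrite that volume as ${\rm cos}(E,H)\,{\rm vol}_k(\Pi_\xi)$, and recognize the outer integral as $T_k(\phi)(E)$. The only additions are your (correct but minor) remarks about approximating the indicator of the compact set $\mathcal{J}_{k,\Pi_\xi}$ and about the measure-zero locus of non-generic $H$, which the paper simply elides.
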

\begin{proof} Let $\psi $ denote the characteristic function of ${\mathcal T}_{k,\Pi_\xi}$
and let $G_\xi $ be the vector subspace generated by $\xi_1,\ldots ,\xi _k$.
Then
$$\chi_k(\mu_{k,\phi})(\Pi_\xi) =\int _{G\in {\rm Gr}(k,V)} \phi (G) \Bigl(\int_G \psi (g+G^\bot) dg\Bigr) dG=
$$

$$= \int_{{\rm Gr}(k,V)} \phi (G) {\rm vol}_k(\Pi_\xi)\, {\rm cos}(G, G_\xi) dG = \Phi (G_\xi){\rm vol}_k(\Pi_\xi)
= \delta _{k,\phi}(\Pi _\xi). $$
\end{proof}
One can mimic the definition (\ref {affine-to-vector}) of $\mu _{k,\phi}$  replacing the function $\phi $ by a Borel measure $\nu $
on ${\rm Gr}(k,V)$. Namely, define the normal measure $\mu _{k,\nu}$ by
$$\int _{{\rm Gr}_a(k,V)}
\psi\cdot d\mu _{k,\nu} = 
\int _{{\rm Gr}(k,V)}\Bigl (\int _H \psi (h+H^\bot)\cdot dh\Bigr)\cdot d\nu(H).
$$
\begin{proposition}\label{cos-transf*}
Let
$$T_k(\nu) (G) = \int _{{\rm Gr}(k,V)} {\rm cos}(H,G)\cdot d \nu (H). $$
If $\Phi = T_k(\nu )$ then
$\delta _{k,\Phi} = \chi_k(\mu_{k,\nu})$.
\end{proposition}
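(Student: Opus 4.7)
The plan is to mimic, essentially verbatim, the proof of Proposition~\ref{cos-transf}, substituting the measure $d\nu(H)$ for the absolutely continuous measure $\phi(H)\,dH$ on ${\rm Gr}(k,V)$. Since ${\mathfrak n}_k$ and ${\mathfrak m}_k$ are defined linearly from the gauge data, and the construction $\nu\mapsto \mu_{k,\nu}$ given in the excerpt is the direct analogue of $\phi\mapsto \mu_{k,\phi}$, no new structural ingredient is needed; only a routine application of Fubini's theorem together with the geometric identification of a certain Lebesgue integral as a cosine-weighted volume.

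First I would fix a $k$-tuple of linearly independent vectors $\xi_1,\ldots,\xi_k \in V$, let $G_\xi$ denote the vector subspace they span, and let $\Pi_\xi$ be the parallelotope they generate. By the definition of $\chi_k$ and of the measure $\mu_{k,\nu}$ (applied to $\psi = \chi_{{\mathcal J}_{k,\Pi_\xi}}$, the characteristic function), one has
\begin{equation*}
\chi_k(\mu_{k,\nu})(\xi_1\wedge\ldots\wedge \xi_k) = \mu_{k,\nu}({\mathcal J}_{k,\Pi_\xi}) = \int_{{\rm Gr}(k,V)} \Bigl(\int_H \psi(h+H^\perp)\,dh\Bigr)\, d\nu(H).
\end{equation*}
Next I would evaluate the inner integral: $\psi(h+H^\perp)=1$ precisely when the affine subspace $h+H^\perp$ meets $\Pi_\xi$, equivalently when $h$ lies in the orthogonal projection $\pi_H(\Pi_\xi)\subset H$. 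Hence $\int_H \psi(h+H^\perp)\,dh={\rm vol}_k(\pi_H(\Pi_\xi))$, and by the definition of the cosine between two $k$-subspaces this equals ${\rm vol}_k(\Pi_\xi)\cdot \cos(G_\xi, H)$.

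Substituting this back, pulling out the factor ${\rm vol}_k(\Pi_\xi)$, and using the definition of $T_k(\nu)$ yields
\begin{equation*}
\chi_k(\mu_{k,\nu})(\xi_1\wedge\ldots\wedge \xi_k) = {\rm vol}_k(\Pi_\xi)\cdot \int_{{\rm Gr}(k,V)} \cos(G_\xi, H)\,d\nu(H) = \Phi(G_\xi)\cdot{\rm vol}_k(\Pi_\xi),
\end{equation*}
which by (\ref{k-density}) is exactly $\delta_{k,\Phi}(\xi_1\wedge\ldots\wedge\xi_k)$. This gives the desired equality $\delta_{k,\Phi}=\chi_k(\mu_{k,\nu})$.

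The only points that require any care are the legitimacy of the application of Fubini's theorem and the interpretation of $\int_H \psi(h+H^\perp)\,dh$ as a $k$-volume. The former is standard because the integrand is a bounded Borel function and the measure $d\nu$ is finite on compact sets (combined with the compactness of $\Pi_\xi$ which confines the relevant $H$ via Lemma~\ref{bounded}); the latter is the same elementary projection argument that appeared in the proof of Proposition~\ref{cos-transf}. I do not anticipate any genuine obstacle beyond verifying these points, since the argument is a direct transcription of the already-established absolutely continuous case.
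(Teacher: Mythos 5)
Your proposal is correct and follows essentially the same approach as the paper, which simply says that it suffices to replace $\phi(G)\,dG$ by $d\nu(G)$ in the proof of Proposition~\ref{cos-transf}. The computation you spell out --- evaluating $\chi_k(\mu_{k,\nu})$ on $\Pi_\xi$ via the characteristic function of ${\mathcal J}_{k,\Pi_\xi}$, identifying the inner integral as ${\rm vol}_k(\pi_H\Pi_\xi) = {\rm vol}_k(\Pi_\xi)\cos(G_\xi,H)$, and absorbing the outer integral into $T_k(\nu)(G_\xi)$ --- is exactly the argument the paper intends.
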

\begin{proof} It suffices to replace $\phi (G)dG$ by $d\nu(G)$ in the proof of Proposition \ref{cos-transf}.
\end{proof}
\begin{proposition}\label{1-normal} Every $C^\infty$ translation invariant 1-density is normal.
\end{proposition}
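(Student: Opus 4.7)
The plan is to realize the given $C^\infty$ density as the $\chi_1$-image of a (signed) normal measure on ${\rm Gr}_a(1,V)$, using Proposition \ref{cos-transf*} as the main bridge. A translation invariant $1$-density on $V$ is automatically even and is determined by its gauge function $\phi$, which by hypothesis lies in $C^\infty({\rm Gr}(1,V))$. By Proposition \ref{cos-transf*} and linearity in the measure argument (differences are allowed in $\mathfrak{m}_1$), it will suffice to exhibit a signed Borel measure $\nu$ on ${\rm Gr}(1,V)\simeq\mathbb{P}(V)$ with $T_1(\nu)=\phi$: splitting $\nu=\nu_+-\nu_-$ into positive parts and applying the proposition to each will then display $\delta_{1,\phi}$ as an element of $\mathfrak{n}_1$.

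The main step is therefore to invert the cosine transform on smooth functions. I will use harmonic analysis on the projective space ${\rm Gr}(1,V)$. Since $T_1$ commutes with the natural action of $O(V)$, Schur's lemma forces it to act as a scalar $\lambda_{2k}$ on each isotypic summand $\mathcal{H}_{2k}$ of degree-$2k$ spherical harmonics, and these exhaust the even functions on the unit sphere. The Funk--Hecke formula computes the eigenvalues $\lambda_{2k}$ explicitly as non-zero real numbers that decay only polynomially in $k$. Given the expansion $\phi=\sum_k \phi_{2k}$, the formal series $\Phi:=\sum_k \lambda_{2k}^{-1}\phi_{2k}$ will then define another smooth function, because the smoothness of $\phi$ forces the harmonic coefficients $\phi_{2k}$ to decrease faster than every polynomial, which more than compensates for the polynomial blow-up of $\lambda_{2k}^{-1}$. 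By construction $T_1(\Phi)=\phi$.

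Once $\Phi$ is in hand, I take $\nu:=\Phi\cdot dH$ with $dH$ the Haar measure on ${\rm Gr}(1,V)$, and write $\nu=(\Phi+M)\,dH - M\,dH$ for any constant $M>\sup|\Phi|$ to realize $\nu$ as a difference of two finite positive Borel measures. Proposition \ref{cos-transf*} then yields $\chi_1(\mu_{1,(\Phi+M)\,dH}-\mu_{1,M\,dH})=\delta_{1,T_1(\Phi)}=\delta_{1,\phi}$, which is exactly the required normality. The only non-formal ingredient is the Funk--Hecke input, namely the non-vanishing together with the polynomial-decay estimate of the eigenvalues $\lambda_{2k}$; this is the main obstacle, and I would cite it from the standard integral-geometry literature (e.g., Groemer's monograph on spherical harmonics in convex geometry or Koldobsky's book on the Fourier approach to convex bodies) rather than reprove it here.
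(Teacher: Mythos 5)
Your proof is correct and follows the same overall scheme as the paper's: realize the given density's gauge function $\phi$ as $T_1(\Phi)$ for a suitable $\Phi$, and then invoke the correspondence between cosine-transform preimages and normal measures (Proposition~\ref{cos-transf} or its measure-valued extension Proposition~\ref{cos-transf*}) to conclude normality. The difference lies in how surjectivity of $T_1$ on $C^\infty$ is established. The paper simply cites the Alesker--Bernstein result \cite{AB}, which characterizes the range of the cosine transform on general Grassmannians; you instead sketch the classical Funk--Hecke argument special to ${\rm Gr}(1,V)\cong \mathbb{P}(V)$: $T_1$ acts diagonally on the even spherical harmonic decomposition with nonzero eigenvalues $\lambda_{2k}$ of only polynomial decay, while smoothness of $\phi$ forces rapid decay of its harmonic components, so $\Phi=\sum\lambda_{2k}^{-1}\phi_{2k}$ converges in $C^\infty$. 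That is a perfectly valid and more self-contained route in the rank-one case (which is all this proposition needs), whereas the paper's citation is broader than necessary but shorter. Two minor remarks: you could have used Proposition~\ref{cos-transf} directly rather than \ref{cos-transf*}, since your $\nu=\Phi\,dH$ is absolutely continuous with smooth density; and the decomposition of $\Phi$ into a difference of non-negative functions is indeed needed to land in ${\mathfrak m}_1$ as the paper defines it (normal measures are genuine Borel measures, and ${\mathfrak m}_1$ is their difference space), which you handle correctly with the constant shift by $M$.
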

\begin{proof} By Proposition \ref{cos-transf} a density of the form $\delta _{1,T_1(\phi)}$ is normal. 
On the other hand, the restriction of the cosine transform to $C^\infty $ functions
 is an automorphism $$T_1: C^\infty ({\rm Gr}(1,V)) \to C^\infty({\rm Gr}(1,V)).$$ 
In particuar, 
any $C^\infty $ function on ${\rm Gr}(1,V)$
is in the image of the cosine transform, see \cite{AB}.
\end{proof}
We will need the notion of push-forward for densities. Let $F:Y \to Z$ be a fibration with $l$-dimensional
fiber $Y_z,\ z\in Z,$ and let $\mu $ be a $k$-density on $Y$. 
Given $e_1\wedge\ldots\wedge e_{k-l} \in \bigwedge ^{k-l} T_zZ$, choose tangent vectors $h_1, \ldots, h_{k-l} \in T_yY$,
such that $dF_y(h_i) = e_i$. Then the $l$-density on the fiber $F_z$, given by
$$i_{h_1\wedge \ldots \wedge h_{k-l}}\mu :  v_1\wedge \ldots \wedge v_l \mapsto \mu (h_1\wedge \ldots \wedge h_{k-l}\wedge v_1 \wedge \ldots \wedge v_l),$$
is independent of the choice of $h_i$. The
push-forward of $\mu $ is defined as the $(k-l)$-density
$$F_*
\mu (e_1\wedge \ldots \wedge e_{k-l}) = 
\int _{F_z}\, i_{h_1
\wedge\ldots\wedge h_{k-l}} \nu,$$
provided the integral is finite.

Let $U \subset V$ be a vector subspace of dimension $k$. 
Define the mapping $$\nu : {\rm Gr}(1,V) \setminus {\rm Gr}(1, U^\bot) \to {\rm Gr}(1,U)$$ by
$\nu (H) = (H+U^\bot)\cap U$. The $\nu $ is a fibration with the fiber
$$\nu^{-1}(L) = {\rm Gr}(1,L+U^\bot) \setminus {\rm Gr}(1,U^\bot).$$
Let $dH$ be the normalized density of highest degree on ${\rm Gr}(1,V)$, i.e., the density
of the Haar measure on the projective space. The push-forward of a continuous real function
$g $ on ${\rm Gr}(1,V) $ is defined by the equality
$$\nu_*(g\cdot dH) = (\nu_*g)\cdot dL,$$
where $dL$ is the normalized density of highest degree on ${\rm Gr}(1,U)$. Note that if $\phi : {\rm Gr}(1,V)
\to {\mathbb R}$
is constant on the fibers of
$\nu $ then $\nu_*(\phi g) = \phi \cdot \nu_*g$.

For future use we prove the following proposition.
\begin{proposition} \label{T_1-preimage}
Let $f\in C^\infty ({\rm Gr}(1,V))),\ f_U$ the restriction of $f $ to ${\rm Gr}(1,U)$,
and $g(H) = T_1^{-1}f(H)\cdot{\rm cos }(H,U).$ Then
$$T_{1,U}^{-1}f_U = \nu _*g,$$
where $T_{1,U}$ is the cosine transform on ${\rm Gr}(1,U)$.
\end{proposition}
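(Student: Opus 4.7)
The plan is to verify the equivalent statement $T_{1,U}(\nu_* g) = f_U$ by exploiting the transitivity of orthogonal projections through $U$. Set $\phi = T_1^{-1}f$, so that $T_1\phi = f$ on all of $\mathrm{Gr}(1,V)$ and $g(H) = \phi(H)\cos(H,U)$.

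The geometric core of the argument is the identity
\[
\cos(H,L) \;=\; \cos(H,U)\cdot \cos(\nu(H),L)
\]
for every $H \in \mathrm{Gr}(1,V)\setminus \mathrm{Gr}(1,U^\bot)$ and every $L\in \mathrm{Gr}(1,U)$. This holds because the orthogonal projection onto $L$ factors as projection onto $U$ followed by projection onto $L\subset U$; if $v$ is a unit vector spanning $H$, then $\pi_L(v) = \pi_L(\pi_U(v))$, and since $\pi_U(v)$ lies on the line $\nu(H)=(H+U^\bot)\cap U$, the lengths multiply. The excluded set $\mathrm{Gr}(1,U^\bot)$ has measure zero with respect to $dH$, so it can be safely discarded in any integration.

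Substituting this identity into the defining formula $f(L) = \int_{\mathrm{Gr}(1,V)} \phi(H)\cos(H,L)\,dH$, which holds for every $L\in\mathrm{Gr}(1,V)$ and in particular for $L\in \mathrm{Gr}(1,U)$, I would obtain
\[
f_U(L) \;=\; \int_{\mathrm{Gr}(1,V)} \phi(H)\cos(H,U)\cos(\nu(H),L)\,dH \;=\; \int_{\mathrm{Gr}(1,V)} g(H)\,\cos(\nu(H),L)\,dH.
\]
Since $\cos(\nu(H),L)$ depends only on $\nu(H)$, it is constant along the fibers of $\nu$. Applying Fubini along the fibration $\nu$ and invoking the definition of push-forward through the relation $\nu_*(g\cdot dH)=(\nu_* g)\cdot dL$ together with the factorization property for functions constant on fibers, the integral becomes
\[
\int_{\mathrm{Gr}(1,U)} (\nu_* g)(L')\,\cos(L',L)\,dL' \;=\; T_{1,U}(\nu_* g)(L),
\]
which yields $T_{1,U}(\nu_* g)=f_U$, and hence the claim $\nu_* g = T_{1,U}^{-1}f_U$, using invertibility of $T_{1,U}$ on smooth functions (as in the proof of Proposition~\ref{1-normal}, citing \cite{AB}).

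The only real subtlety is the bookkeeping around the measure-zero set $\mathrm{Gr}(1,U^\bot)$ where $\nu$ is undefined and the factorization of the cosine breaks down; since the Haar-type density $dH$ assigns no mass to this set and $g$ and $\phi$ are continuous, this causes no trouble. Everything else reduces to the elementary factorization of projections and Fubini's theorem applied to the fibration $\nu$.
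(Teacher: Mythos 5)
Your proposal is correct and follows essentially the same route as the paper's proof: both rest on the factorization $\cos(H,L)=\cos(H,U)\cos(\nu(H),L)$, substitute it into $f_U(L)=\int T_1^{-1}f(H)\cos(H,L)\,dH$, and recognize the resulting integral as $T_{1,U}(\nu_*g)(L)$ via the push-forward under $\nu$. The additional justification you give for the cosine identity and the measure-zero exclusion of ${\rm Gr}(1,U^\bot)$ is just elaboration of steps the paper takes for granted.
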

\begin{proof} For $L \in {\rm Gr}(1,U)$ one has ${\rm cos}(H,L) = {\rm cos}(H,U)\,{\rm cos}(\nu(H),L)$,
hence
$$f_U(L) = \int_{{\rm Gr}(1,V)}\{T_1^{-1}f(H)\}{\rm cos}(H,L) \,dH = \int_{{\rm Gr}(1,V)}g(H){\rm cos}(\nu(H),L)dH.$$
From the defintion of $\nu_*g$, it follows that
$$f_U(L) = \int_{{\rm Gr}(1,U)}(\nu_*g)(K){\rm cos}(K,L)dK = T_{1,U}(\nu _*g)(L).$$
\end{proof}

\subsection{Normal densities and valuations of convex bodies}\label{val}
This small subsection contains a number of remarks that are neither proved, nor used in the rest of the paper.
Let $v$ be a function on the set of compact convex bodies in ${\mathbb R}^n$.
Then $v $ is called a valuation if 
$v (A\cap B) + v (A\cup B) = v(A) + v (B)$
for any two compact convex bodies $A,B$, such that $A\cup B$ is also convex.
For the theory of valuations the reader is referred to \cite{A} and references therein.
We assume that $v $ is continuous in Hausdorff metric, translation invariant and even, i.e., $v (-A) = v(A)$. 
The valuation $v $ is called $k$-homogeneous if $v (tA) = \vert t \vert^k v(A)$.
A normal $k$-density $\delta $, as a function on $k$-dimensional parallelotopes, extends to a $k$-homogeneous
valuation $v_\delta $. Indeed, 
following Definition \ref{normal density}, we may assume that $\delta = \chi _k(\mu _k)$, where
$\mu _k$ is a normal measure. Then we put $v_\delta (A) = \mu _k({\mathcal J}_{k,A})$,
where ${\mathcal J}_{k,A}$ is defined by (\ref{J}). In other words, normal densities are contained
in the image of Klain map, see \cite{Kl}.

{\it Remark 1}. If a normal $k$-density $\delta $ is smooth (of class $C^\infty$) then the valuation $v_\delta$ is also smooth
in the sense of \cite{A}.
The assignment $\delta \mapsto v_\delta$ defines a one-to-one correspondence
between the sets of smooth normal $k$-densities and smooth $k$-homogeneous valuations.

{\it Remark 2}. The product of smooth normal densities
agrees with Alesker product of smooth valuations. Therefore the equality
$$d_p(A_1, \ldots, A_p)\cdot d_q(A_{p+1}, \ldots, A_{p+q}) = {(p+q)!\over p!q!}d_{p+q}(A_1, \ldots, A_{p+q}),$$
following from Theorem \ref{A_1,...,A_k} in Section \ref{proofs},   
can be regarded as a computation of the product
of valuations related to smooth centrally symmetric bodies $A_i$. 

{\it Remark 3.} The product of measures on affine Grassmanians and its connection with Alesker product
of valuations is considered in \cite{Bg}.
 
{\it Remark 4}. The pull-back operation on valuations of convex bodies is defined in \cite{A1}. This operation
agrees with our pull-back operation for normal measures.
 
\section{Proofs of main resuts}\label{proofs}
\subsection{Some facts from convex geometry}\label{convex}
Let $V={\mathbb R}^n$ be a Euclidean space, $S^{n-1} \subset V$ the unit sphere. 
For a vector subspace $M \subset V$ we denote by $M^\perp $ its orthogonal complement and by $\pi _M$
the projection map $V \to M$.
For $x \in S^{n-1}$ we write $x^\perp$ instead of $({\mathbb R}\cdot x)^\perp$.

Let
 $A \subset V$  be a compact convex set of dimension $k$, 
${\rm V}_k(A)$  its $k$-dimensional volume and $h_A$ the support function. Later on, it will be also convenient
to define the width function of $A$ on the projective space ${\rm Gr}(1,V)$ by
$$s_A(H) = h_A(x) + h_A(-x) = {\rm V}_1(\pi _HA),$$
where $H \in {\rm Gr}(1,V)$ and $x$ is a unit vector in $H$.
The cosine transform
on the unit sphere $$f \mapsto (Tf)(x) = \int _{S^{n-1}}f(s)\vert (x,s) \vert ds$$
will be considered as a linear operator on even functions which are identified
with functions on ${\rm Gr}(1,V)$. 
Then $T$ is invertible on the space
of even $C^\infty $ functions, see \cite{AB}. Furthermore, if $A$ is centrally symmetric with center 0  
and $h_A$ is smooth, then $h_A$ is contained in the image of $T$.
The proof of the following result is due to S.Alesker.
\begin{lemma}\label{Alesker} For a smooth centrally symmetric body $A$ with center 0 one has
$$\int _{S^{n-1}}T^{-1}h_A(x)\, {\rm V}_{n-1}(\pi _{x^\perp}A)dx = {n\over 2}\,{\rm V}_n(A).$$
\end {lemma}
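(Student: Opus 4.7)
The plan is to combine two classical identities from convex geometry with the self-adjointness of the cosine transform. Throughout, I will write $dS_A$ for the surface area measure of $A$ on $S^{n-1}$ (the push-forward of the $(n-1)$-dimensional Hausdorff measure on $\partial A$ under the Gauss map, which is well-defined since $A$ is smooth).

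The first ingredient is Cauchy's projection formula, which expresses the brightness function of $A$ as a cosine-transform-type integral against the surface area measure:
$$V_{n-1}(\pi_{x^\perp}A) \;=\; \tfrac{1}{2}\int_{S^{n-1}} |(x,s)|\,dS_A(s).$$
The second ingredient is the standard Minkowski-type identity for volume:
$$V_n(A) \;=\; \tfrac{1}{n}\int_{S^{n-1}} h_A(s)\,dS_A(s).$$

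With these in hand, I would substitute Cauchy's formula into the left-hand side of the lemma and apply Fubini to interchange the two integrals over $S^{n-1}$:
$$\int_{S^{n-1}} T^{-1}h_A(x)\,V_{n-1}(\pi_{x^\perp}A)\,dx \;=\; \tfrac{1}{2}\int_{S^{n-1}}\left(\int_{S^{n-1}} T^{-1}h_A(x)\,|(x,s)|\,dx\right)dS_A(s).$$
The inner integral is by definition $T(T^{-1}h_A)(s)$, which equals $h_A(s)$. Since $A$ is smooth and centrally symmetric, $h_A$ is a smooth even function on $S^{n-1}$ and hence lies in the image of $T$ (as recalled just before the lemma, citing \cite{AB}), so $T^{-1}h_A$ is well-defined and the cancellation $T\circ T^{-1}=\operatorname{id}$ is legitimate. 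This reduces the expression to $\tfrac{1}{2}\int_{S^{n-1}} h_A(s)\,dS_A(s)$, and a single application of the volume formula above yields $\tfrac{n}{2}V_n(A)$.

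The argument is essentially a one-line Fubini computation once the two classical formulas are invoked; there is no serious obstacle. The only subtle point is the appeal to $T^{-1}$: it must be confirmed that the operator inverse is taken in the correct function class (smooth even functions on the sphere, where $T$ is bijective by \cite{AB}), so that writing $h_A = T(T^{-1}h_A)$ is rigorously justified. This is why smoothness of $A$ is built into the hypothesis.
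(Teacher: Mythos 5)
Your proof is correct and uses the same three ingredients as the paper's argument --- Cauchy's projection formula, Minkowski's volume formula, and the symmetry of the kernel $\vert (x,s)\vert$ (which is what self-adjointness of $T$ amounts to) --- but your formulation in terms of the surface area measure $dS_A$ is a bit cleaner. The published proof writes $dS_A = K^{-1}\,dx$, which requires the Gaussian curvature $K$ of $\partial A$ to be nonvanishing, and therefore opens with an approximation step: replace $A$ by a nearby smooth centrally symmetric body with $K>0$ and invoke continuity of $T^{-1}$ in the $C^\infty$-topology. By working directly with the surface area measure, which is a well-defined finite Borel measure for every convex body, you sidestep this detour; Fubini is legitimate because $T^{-1}h_A$ is continuous (since $h_A$ is a smooth even function and $T$ is a bijection on such functions by \cite{AB}), and both Cauchy's formula and Minkowski's volume formula hold for arbitrary convex bodies. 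The computations are otherwise identical: the paper's invocation of self-adjointness,
$$\int_{S^{n-1}} h_A\,K^{-1}\,dx = \int_{S^{n-1}} T^{-1}h_A\cdot TK^{-1}\,dx,$$
is your Fubini interchange in disguise, and its identity $TK^{-1}(x) = 2\,{\rm V}_{n-1}(\pi_{x^\perp}A)$ is precisely your form of Cauchy's formula once one substitutes $dS_A = K^{-1}\,dx$.
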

\begin{proof}
We may assume that the Gaussian curvature $K$ of $\partial A$ does not vanish. Indeed, one can approximate $A$ by a convex cenrally symmetric smooth body having this property and
then use the continuity of $T^{-1}$ in $C^\infty $-topology.
Recall that ${\rm V}_n(A) = {1\over n} \int_{S^{n-1}}h_A(x)K(x)^{-1}dx$.
Since $T$ is a self-adjoint operator in $L^2(S^{n-1}, dx)$, we have
$$n{\rm V}_n(A) = \int _{S^{n-1}}h_A(x)\, K^{-1}(x)dx = \int _{S^{n-1}}T^{-1}h_A(x)\, TK^{-1}(x)dx=$$
$$= 2\int_{S^{n-1}}T^{-1}h_A(x)\, {\rm V}_{n-1}(\pi _{x^\perp}A)dx,$$ 
where we used the identity $TK^{-1}(x) = 2{\rm V}_{n-1}(\pi _{x^\perp}A)$.
\end{proof}
\noindent
We now want to restate the assertion of Lemma \ref{Alesker}
in terms of the Haar measure $dH$ on the projective space ${\rm Gr}(1,V)$. Note that for $A$
symmetric $s_A(H) = 2h_A(x)$, where $x$ is a unit vector in $H$.

\begin{corollary}\label {Haar-1}
Let $T_1$ be the cosine transform on $Gr(1,V)$. Under the above assumptions
$$\int_{{\rm Gr}(1,V)}T_1^{-1}s_A(H)\, {\rm V}_{n-1}(\pi _{H^\perp}A)dH = n{\rm V}_n(A).$$
\end{corollary}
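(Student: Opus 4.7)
The plan is to reduce the corollary to Lemma \ref{Alesker} by carefully translating between the cosine transform $T$ on $S^{n-1}$ and the cosine transform $T_1$ on ${\rm Gr}(1,V)$, using the standard $2$-to-$1$ covering $S^{n-1}\to{\rm Gr}(1,V)$.

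First I will identify even continuous functions on $S^{n-1}$ with functions on ${\rm Gr}(1,V)$: for an even $f$ on the sphere let $\tilde f(H)=f(x)$ where $x$ is either unit vector in $H$. Since $dH$ is the normalized Haar measure on ${\rm Gr}(1,V)$ while $dx$ is the usual surface measure on $S^{n-1}$, for any even continuous $f$ one has
$$\int_{S^{n-1}} f(x)\,dx \;=\; \sigma_{n-1}\int_{{\rm Gr}(1,V)} \tilde f(H)\,dH.$$
Applied to the integrand $y\mapsto f(y)|(x,y)|$ (which is even in $y$ for any $x$), this gives $(Tf)(x)=\sigma_{n-1}\,T_1(\tilde f)(\mathbb R x)$, because $\cos(H,\mathbb R x)=|(x,y_H)|$ for a unit vector $y_H\in H$. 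Thus the two operators are related by
$$\widetilde{Tf}\;=\;\sigma_{n-1}\,T_1\tilde f \qquad\text{on even functions,}$$
and consequently $T_1^{-1}\tilde f=\sigma_{n-1}\,\widetilde{T^{-1}f}$ whenever both inverses make sense.

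Next I apply this to $f=h_A$, which is even because $A$ is centrally symmetric about the origin, and smooth by hypothesis, so $T^{-1}h_A$ is well-defined by \cite{AB}. Using $s_A(H)=2h_A(x)$ for a unit vector $x\in H$ (the defining relation between width and support function of a symmetric body), one gets $\widetilde{s_A}=2\widetilde{h_A}$, hence
$$T_1^{-1}s_A(H)\;=\;2\,T_1^{-1}\widetilde{h_A}(H)\;=\;2\sigma_{n-1}\,(T^{-1}h_A)(x) \qquad\text{for any unit }x\in H.$$
Moreover ${\rm V}_{n-1}(\pi_{x^\perp}A)$ depends only on the line through $x$ (so descends to ${\rm Gr}(1,V)$ and equals ${\rm V}_{n-1}(\pi_{H^\perp}A)$), and the product $(T^{-1}h_A)(x)\cdot{\rm V}_{n-1}(\pi_{x^\perp}A)$ is therefore an even function on $S^{n-1}$.

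Finally I substitute into Lemma \ref{Alesker}: converting its left-hand integral from $S^{n-1}$ to ${\rm Gr}(1,V)$ produces a factor $\sigma_{n-1}$ from the covering, and replacing $T^{-1}h_A$ by $\tfrac{1}{2\sigma_{n-1}}T_1^{-1}s_A$ introduces a compensating factor $\tfrac{1}{2\sigma_{n-1}}$, so the two $\sigma_{n-1}$'s cancel and a single factor $\tfrac{1}{2}$ remains. Thus Lemma \ref{Alesker}'s equality $\tfrac{n}{2}{\rm V}_n(A)$ becomes
$$\tfrac{1}{2}\int_{{\rm Gr}(1,V)} T_1^{-1}s_A(H)\,{\rm V}_{n-1}(\pi_{H^\perp}A)\,dH \;=\; \tfrac{n}{2}{\rm V}_n(A),$$
which is exactly the asserted identity. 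There is no real obstacle here; the only point requiring care is keeping track of the factor of $2$ from evenness/double cover and the factor $\sigma_{n-1}$ from the non-normalized sphere measure versus the normalized Haar measure on the projective space, which cancel against each other.
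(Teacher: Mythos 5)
Your proof is correct and follows essentially the same route the paper intends: converting Lemma~\ref{Alesker} from the sphere to the projective space via the relation $s_A=2h_A$ and the factor $\sigma_{n-1}$ coming from the normalized Haar measure and the $2$-to-$1$ cover, which is precisely what the paper's terse remark before the corollary indicates. Your bookkeeping of the two $\sigma_{n-1}$'s and the factor $2$ is accurate.
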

\smallskip
\noindent
The next proposition deals with a vector subspace $D \subset V$ . For $D = V$ we retrieve 
Corollary \ref{Haar-1}. 
\begin{proposition}\label{Haar-2} Let $D\subset V$
be a $k$-dimensional vector subspace. Then under the above asumptions
$$\int_{{\rm Gr}(1,V)} T_1^{-1}s_A(H)\,{\rm cos}(H,D){\rm V}_{k-1}(\pi _{H^\perp\cap D}A)dH = k{\rm V}_k(\pi_DA).$$  
\end{proposition}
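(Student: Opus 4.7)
The plan is to reduce the integral over $\mathrm{Gr}(1,V)$ to an integral over $\mathrm{Gr}(1,D)$ via the fibration $\nu$ from Proposition \ref{T_1-preimage}, and then apply Corollary \ref{Haar-1} inside the ambient space $D$ to the projected body $\pi_D A$.

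First I would apply Corollary \ref{Haar-1}, restated in the Euclidean space $D$ with the body $\pi_D A\subset D$, yielding
\[
\int_{\mathrm{Gr}(1,D)} T_{1,D}^{-1}s_{\pi_D A}(L)\,\mathrm{V}_{k-1}(\pi_{L^{\perp}\cap D}\,\pi_D A)\,dL \;=\; k\,\mathrm{V}_k(\pi_D A).
\]
Since $L^{\perp}\cap D\subset D$, one has $\pi_{L^{\perp}\cap D}\pi_D A=\pi_{L^{\perp}\cap D}A$, so the right-hand side already matches the target of the proposition. It remains to identify the left-hand side of the proposition with the integral above.

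Second, I would apply Proposition \ref{T_1-preimage} to $f=s_A$ with $U=D$, noting that for $L\in\mathrm{Gr}(1,D)$ the width of $A$ in direction $L$ equals the width of $\pi_D A$, i.e. $s_A|_{\mathrm{Gr}(1,D)}=s_{\pi_D A}$. Setting $g(H)=T_1^{-1}s_A(H)\cdot\cos(H,D)$, the proposition gives $T_{1,D}^{-1}s_{\pi_D A}=\nu_* g$.

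Third comes the key geometric identity: for every $H\in\mathrm{Gr}(1,V)\setminus\mathrm{Gr}(1,D^{\perp})$, writing $L=\nu(H)=(H+D^{\perp})\cap D=\pi_D H$, I would verify that $H^{\perp}\cap D=L^{\perp}\cap D$. Indeed, for $v\in D$ and $h\in H$ one has $\langle v,h\rangle=\langle v,\pi_D h\rangle$, so $v\perp H$ within $D$ is equivalent to $v\perp L$ within $D$. Consequently the function $H\mapsto \mathrm{V}_{k-1}(\pi_{H^{\perp}\cap D}A)$ is the pullback $\nu^*\phi$ of $\phi(L)=\mathrm{V}_{k-1}(\pi_{L^{\perp}\cap D}A)$, i.e.\ it is constant on the fibres of $\nu$ (the locus $\mathrm{Gr}(1,D^{\perp})$ being negligible for the Haar measure $dH$).

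Finally, I would combine these facts using the defining property of push-forward recalled right before Proposition \ref{T_1-preimage}: for $\phi$ constant on fibres, $\nu_*(\phi g\cdot dH)=\phi\cdot(\nu_* g)\cdot dL$, hence
\[
\int_{\mathrm{Gr}(1,V)}g(H)\,\phi(\nu H)\,dH
=\int_{\mathrm{Gr}(1,D)}(\nu_* g)(L)\,\phi(L)\,dL
=\int_{\mathrm{Gr}(1,D)}T_{1,D}^{-1}s_{\pi_D A}(L)\,\mathrm{V}_{k-1}(\pi_{L^{\perp}\cap D}A)\,dL,
\]
which by the first step equals $k\,\mathrm{V}_k(\pi_D A)$.

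The main obstacle is not computational but conceptual: one must recognise that the integrand on $\mathrm{Gr}(1,V)$ has precisely the structure $g(H)\cdot(\nu^*\phi)(H)$, which is exactly what the push-forward formula of Proposition \ref{T_1-preimage} was designed to transform. The verification that $H^{\perp}\cap D$ depends only on $\nu(H)$ is the little piece of linear algebra that makes the reduction work; the smoothness/nondegeneracy hypotheses of Corollary \ref{Haar-1} transfer to $\pi_D A$ by continuity in $C^{\infty}$-topology (or by a standard approximation argument for the Gaussian curvature of $\partial\pi_D A$), so applying the corollary in $D$ causes no trouble.
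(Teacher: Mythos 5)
Your proposal is correct and follows essentially the same route as the paper: express $T_{1,D}^{-1}(s_A)_D$ as $\nu_*g$ via Proposition \ref{T_1-preimage}, observe that $H\mapsto V_{k-1}(\pi_{H^\perp\cap D}A)$ is pulled back from ${\rm Gr}(1,D)$ because $H^\perp\cap D$ depends only on $\nu(H)$, push forward along $\nu$, and invoke Corollary \ref{Haar-1} for $\pi_D A\subset D$. You spell out the linear-algebra identity $H^\perp\cap D = L^\perp\cap D$ and the fact that $(s_A)\vert_{{\rm Gr}(1,D)}=s_{\pi_D A}$ more explicitly than the paper does, which is a small bonus; the worry about smoothness of $\pi_D A$ at the end is actually unnecessary, since its support function is just the restriction of the $C^\infty$ function $h_A$.
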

\begin{proof}
For $H \in {\rm Gr}(1,V)\setminus{\rm Gr}(1,D^\perp)$ put 
$\nu (H) = (H+D^\perp)\cap D$. By Proposition \ref{T_1-preimage} we obtain
$T^{-1}_{1,D}(s_A)_ D = \nu_*g$,
where 
$$g(H) = T_1^{-1}s_A(H)\,{\rm cos}(H,D).$$
Let $\phi (H) = {\rm V}_{k-1}(\pi_{H^\perp\cap D}A) = {\rm V}_{k-1}(\pi _{L_H^\perp}\pi _DA)$, where $L_H^\perp$
is the ortho\-gonal complement to $L_H =\nu (H)$ in $D$. Since $\phi $ is constant along the fibers of $\nu $,
we have
$\nu _*(\phi g) = \phi \nu_*(g)$.
It follows that the integral on the left hand side equals
$$\int _{{\rm Gr}(1,D)}\bigl (T_{1,D}^{-1}(s_A)_D\bigr )(L)\, {\rm V}_{k-1}(\pi _{L^\perp}\pi _DA)\, dL.$$
Applying Corollary \ref{Haar-1} to the convex body $\pi _DA \subset D$, we get the desired equality.
\end{proof}

\subsection{Densities {$\bf d_k(A_1,\ldots,A_k)$}}\label{densities d_k}
Suppose we are given $k$ compact convex sets $B_1, \ldots, B_k$ in a $k$-dimensional vector subspace of
$V = {\mathbb R}^n$. Then their mixed volume is denoted by 
${\rm V}_k(B_1, \ldots, B_k)$. For any compact convex sets $A_1, \ldots, A_k \subset V$ we have the associated translation invariant
$k$-density $d_k(A_1, \ldots, A_k)$, defined as follows. Let $\xi_1, \ldots, \xi_k \in V$ and let $H$ be the vector subspace
generated by $\xi_1, \ldots, \xi_k$. Then
$$d_k(A_1, \ldots, A_k)(\xi_1\wedge\ldots\wedge \xi_k) =
{\rm V}_k(\pi _HA_1, \ldots, \pi_HA_k)  \cdot {\rm vol}_k(\xi_1\wedge \ldots \wedge \xi_k)$$
if ${\rm dim}\, H = k$ and
$d_k(A_1, \ldots, A_k) = 0$ if ${\rm dim}\, H < k$.
We also use the notation $d_k(A) = d_k(A, \ldots, A)$, where $A$ appears $k$ times on the right hand side.

Recall that a translation invariant $k$-density can be evaluated on compact subsets contained in a shift of a $k$-dimensional
vector subspace, see \ref{normal}. Recall also that normal densities can be multiplied, see Corollary \ref{multiplication}.

\begin{proposition}\label{d_1 normal} Let $A$ be a smooth convex body.
Then the density $d_1(A)$ is normal.
\end{proposition}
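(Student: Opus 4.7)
The plan is to reduce this immediately to Proposition \ref{1-normal}, which already asserts that every $C^\infty$ translation invariant 1-density on $V$ is normal, so the entire task is to recognize $d_1(A)$ as such a density.

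First I would rewrite $d_1(A)$ in the standard gauge-function form (\ref{k-density}). For $\xi \in V\setminus\{0\}$, let $H = \mathbb{R}\xi \in {\rm Gr}(1,V)$ and let $x$ be the unit vector along $\xi$. Then $\pi_H A$ is a segment in $H$ whose 1-volume is exactly the width $s_A(H) = h_A(x) + h_A(-x)$. Thus
\[
d_1(A)(\xi) \;=\; {\rm V}_1(\pi_H A)\cdot {\rm vol}_1(\xi) \;=\; s_A(H)\cdot {\rm vol}_1(\xi),
\]
so $d_1(A) = \delta_{1,s_A}$ in the notation of (\ref{k-density}), with gauge function $\phi = s_A$ on ${\rm Gr}(1,V)$.

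Next I would check smoothness of the gauge function. By hypothesis $A$ is a smooth convex body, i.e., $h_A \in C^\infty(S^{n-1})$; since $s_A(H) = h_A(x)+h_A(-x)$ is the even part of $h_A$ viewed as a function on ${\rm Gr}(1,V) = S^{n-1}/\{\pm 1\}$, it is a $C^\infty$ function on the projective space. Hence $d_1(A)$ is a $C^\infty$ translation invariant 1-density, and Proposition \ref{1-normal} applies to give normality.

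As an alternative, one can produce the normal measure explicitly: since the cosine transform $T_1$ is an automorphism of $C^\infty({\rm Gr}(1,V))$ (cited from \cite{AB}), there exists $\psi \in C^\infty({\rm Gr}(1,V))$ with $T_1(\psi) = s_A$, and Proposition \ref{cos-transf} then yields $d_1(A) = \delta_{1,s_A} = \chi_1(\mu_{1,\psi})$, exhibiting $d_1(A)$ as a normal density in the sense of Definition \ref{normal density}. No step here should be a serious obstacle; the only subtlety worth a sentence is the passage from the smoothness of $h_A$ on the sphere to the smoothness of $s_A$ on the projective Grassmanian, which is trivial because $s_A$ is manifestly even.
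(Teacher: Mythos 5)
Your proof is correct and follows essentially the same route as the paper: both identify $d_1(A)$ with the 1-density $\delta_{1,s_A}$ whose gauge function is the (smooth) width function $s_A$, and both ultimately rely on surjectivity of the cosine transform on $C^\infty({\rm Gr}(1,V))$ to exhibit a normal measure. Citing Proposition~\ref{1-normal} directly is a marginally shorter path, but since that proposition was itself established via $T_1^{-1}$ and Proposition~\ref{cos-transf}, it is the same argument; your ``alternative'' is word-for-word the paper's proof.
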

\begin{proof} Let $\xi \in V, \xi\ne 0$, and $H = {\mathbb R}\xi$. 
Then $$d_1(A)(\xi) = {\rm V}_1(\pi_HA)\cdot {\rm vol }_1(\xi) = s_A(H)\cdot {\rm vol}_1(\xi )
= \delta_{1,s_A}(\xi)  $$
by (\ref{k-density}). 
Since $s_A$ is a smooth function, we can put
$\phi = T_1^{-1}(s_A)$. Then
$$d_1(A) = \delta_{1,s_A}= \chi_1(\mu _{1, \phi})$$
by Proposition \ref{cos-transf}, showing that $d_1(A)$ is normal.
\end{proof}

\begin{proposition} \label{d_1,d_k} Let $A$ be as in Proposition \ref{d_1 normal} and centrally symmetric.
If $B \subset V$ is a compact convex set of dimension $k$, contained in a shift 
of a $k$-dimensional
vector subspace $D \subset V$, then
$$(d_1(A))^k(B) = k!\,d_k(A)(B).$$
\end{proposition}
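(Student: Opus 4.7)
The plan is to induct on $k$, with the base case $k=1$ being a tautology. For the inductive step, the strategy is to peel off one factor of $d_1(A)$ via Proposition \ref{d_1 normal}---which identifies $d_1(A) = \chi_1(\mu_{1,\phi})$ with $\phi = T_1^{-1}s_A$---and then reduce the resulting integral to Proposition \ref{Haar-2}, which was set up for exactly this purpose.

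Applying the Fubini-type expansion used in the proof of Proposition \ref{continuous} to the factorization $(d_1(A))^k = (d_1(A))^{k-1}\cdot d_1(A)$ gives
$$(d_1(A))^k(B) = \int_{G \in {\rm Gr}_a(1, V)} (d_1(A))^{k-1}(B \cap G)\,d\mu_{1,\phi}(G).$$
Writing $G = h + H^\perp$ with $H \in {\rm Gr}(1,V)$ and $h \in H$, one has $H \not\subset D^\perp$ outside a set of $dH$-measure zero. For such $H$ the slice $B \cap G$ lies in a shift of the $(k-1)$-dimensional subspace $D \cap H^\perp$, so the inductive hypothesis applies and yields
$$(d_1(A))^{k-1}(B \cap G) = (k-1)!\,{\rm V}_{k-1}(\pi_{D \cap H^\perp}A)\cdot {\rm vol}_{k-1}(B \cap G).$$

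Next, I would substitute the disintegration $d\mu_{1,\phi}(G) = \phi(H)\,dh\,dH$ from (\ref{affine-to-vector}) and discharge the inner $h$-integral by Cavalieri: since $\pi_H$ restricted to a shift of $D$ is zero on $D\cap H^\perp$ and has Jacobian $\cos(H, D)$ on the orthogonal 1-dimensional complement of $D\cap H^\perp$ in $D$, the slice identity
$$\int_H {\rm vol}_{k-1}(B \cap (h + H^\perp))\,dh = \cos(H, D)\cdot {\rm vol}_k(B)$$
holds. Collecting factors produces
$$(d_1(A))^k(B) = (k-1)!\,{\rm vol}_k(B)\int_{{\rm Gr}(1,V)} T_1^{-1}s_A(H)\,\cos(H, D)\,{\rm V}_{k-1}(\pi_{D \cap H^\perp}A)\,dH.$$
The remaining integral is exactly the left-hand side of Proposition \ref{Haar-2} applied to the pair $(A, D)$, and evaluates to $k\,{\rm V}_k(\pi_D A)$. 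Multiplying through gives $(d_1(A))^k(B) = k!\,{\rm V}_k(\pi_D A)\,{\rm vol}_k(B) = k!\,d_k(A)(B)$, as desired. The only non-bookkeeping step is verifying the Cavalieri slice identity with the correct $\cos(H, D)$ factor; once it is in place, Proposition \ref{Haar-2} has been tailored to absorb the integral precisely.
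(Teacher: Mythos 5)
Your proof is correct and follows essentially the same route as the paper: induction on $k$, the Fubini expansion $(d_1(A))^k(B) = \int_{G} (d_1(A))^{k-1}(B\cap G)\,d\mu_{1,\phi}(G)$, the disintegration from (\ref{affine-to-vector}), the Cavalieri slice identity $\int_H {\rm vol}_{k-1}(B\cap(h+H^\perp))\,dh = \cos(H,D)\,{\rm vol}_k(B)$, and finally Proposition \ref{Haar-2} to evaluate the remaining integral. The only step you gloss over that the paper makes explicit is the preliminary translation of $A$ to place its center of symmetry at the origin, which is needed before invoking Proposition \ref{Haar-2}.
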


\begin{proof}
We will prove our statement by induction.
For $k = 1$ there is nothing to prove, so let $k > 1$. By the definition of the product of normal densities
$\delta_p = \chi_p(\mu _p), \delta_q = \chi_q(\mu_q)$
and by Fubini's theorem
$$\delta _p\delta _q (B) =(\mu _p \times \mu_q)\{(I,J)\in {\rm Gr}_a(p,V)\times{\rm Gr}_a(q,V) 
\vert I \cap J \cap B \ne \emptyset\}=$$
$$=\int _{I \in {\rm Gr}_a(p,V)} \mu_q\{J\in {\rm Gr}_a(q,V )\,\vert \, I\cap J \cap B \ne \emptyset \} d\mu_p,$$
where $B$ is a compact convex set of dimension $p+q$. Take $p=1, \, q= k-1, \,
\delta _1= d_1(A), \, \delta _{k-1} = d_1(A)^{k-1}$ and apply the induction hypothesis.
We get
$$(d_1(A))^k(B)
= \int_{I\in {\rm Gr}_a(1,V)} d_1(A)^{k-1}(B\cap I) \, d\mu_{1,\phi}=$$
$$=(k-1)! \int_{I\in {\rm Gr}_a(1,V)}{\rm V}_{k-1}(B\cap I)\, {\rm V}_{k-1} (\pi _{D_I}A)\,d\mu_{1,\phi},$$
where $D_I$ is the intersection of $D$ with the hyperplane through the origin parallel to $I$.
Using (\ref {affine-to-vector}) rewrite this as
$$(d_1(A))^k(B) =$$
$$ =(k-1)! \int_{H\in {\rm Gr}(1,V)} \phi(H) {\rm V}_{k-1}(\pi _{H^\perp \cap D}A) \Bigl( \int_HV_{k-1}((h+H^
\perp)\cap B)dh \Bigr ) dH.$$
Now notice that 
$$\int _H{\rm V}_{k-1}((h+H^\perp)\cap B) dh = {\rm V}_k(B)\, {\rm cos}(H,D),$$
hence
$$(d_1(A))^k(B) =$$
$$= (k-1)! {\rm V}_k(B)\int_{H\in {\rm Gr}(1,V)}T_1^{-1}s_A(H){\rm V}_{k-1}
(\pi _{H^\perp\cap D}A){\rm cos}(H,D)dH.$$
A parallel shift of $A$ does not change the density $d_1(A)$.
Therefore we may assume that the center of symmetry of $A$ is 0.
Then Proposition \ref{Haar-2} applies and our assertion follows.
\end{proof}

By definition, a zonotope is a Minkowski sum of finitely many segments and a
zonoid is a convex body that can be approximated, in Hausdorff metric, by a sequence
of zonotopes. Zonoids are known to have a center of symmetry. The zonoids with center 0 are characterized  by the fact
that their support functions are obtained via cosine transform from non-negative even measures on the sphere, see \cite{Sch}.

\begin{lemma}\label{zonoid} A zonoid can be approximated by a sequence of smooth zonoids.
\end{lemma}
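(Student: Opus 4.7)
My plan is to smooth a zonoid by averaging over a small neighborhood of the identity in $SO(n)$, then check that the averaged body is itself a zonoid, is smooth in the sense of the paper, and converges to the original. First I would reduce to the full-dimensional case: the Euclidean ball $B$ is a zonoid (its support function is, up to a constant, the cosine transform of the rotation-invariant probability measure on $S^{n-1}$), and Minkowski sums of zonoids are zonoids, so $Z+\delta B$ is a full-dimensional zonoid converging to $Z$ as $\delta\to 0$. By a diagonal argument I may therefore assume $Z$ has non-empty interior.

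For the smoothing itself, I would fix a non-negative $\rho_\epsilon\in C^\infty(SO(n))$ supported in a small neighborhood of the identity, with $\int_{SO(n)}\rho_\epsilon\,dg=1$ for the normalized Haar measure, and form the Minkowski integral
\[
Z_\epsilon \;=\; \int_{SO(n)} \rho_\epsilon(g)\,(gZ)\,dg,
\qquad
h_{Z_\epsilon}(x)\;=\;\int_{SO(n)} \rho_\epsilon(g)\,h_Z(g^{-1}x)\,dg.
\]
That $Z_\epsilon$ is a zonoid follows by approximating the integral with Riemann sums $\sum_j \lambda_j(g_jZ)$: each rotate $g_jZ$ is a zonoid, finite Minkowski sums of zonoids are zonoids, and the class of zonoids is Hausdorff-closed by definition, so the limit $Z_\epsilon$ is a zonoid as well.

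The key analytic step is the smoothness of $h_{Z_\epsilon}$ on $S^{n-1}$. The restriction $h_{Z_\epsilon}|_{S^{n-1}}$ is nothing but the group convolution of the continuous function $f=h_Z|_{S^{n-1}}$ against the smooth compactly supported kernel $\rho_\epsilon$ for the transitive action of $SO(n)$ on $S^{n-1}$; after the substitution $g\mapsto g'g$ all $x$-derivatives fall on $\rho_\epsilon$, so $h_{Z_\epsilon}|_{S^{n-1}}\in C^\infty$. Once we have arranged that $Z$ has interior, $h_{Z_\epsilon}\to h_Z$ uniformly and $Z_\epsilon$ is $n$-dimensional for small $\epsilon$, so $Z_\epsilon$ is a smooth convex body in the paper's sense. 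The uniform convergence $h_{Z_\epsilon}\to h_Z$ on $S^{n-1}$, which is the standard approximate-identity property of convolution, is equivalent to Hausdorff convergence $Z_\epsilon\to Z$, completing the plan.

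The main (mild) obstacle is verifying the $C^\infty$ claim for the rotational convolution; this is routine once one invokes the translation trick on $SO(n)$, but it is the one place where care is needed, since $h_Z$ itself is only assumed continuous.
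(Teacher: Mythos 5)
Your proof is correct, but it takes a genuinely different route from the paper's. The paper works from the definition of a zonoid as a Hausdorff limit of zonotopes: since a Minkowski sum of smooth bodies is smooth and a Minkowski sum of zonoids is a zonoid, it suffices to approximate a single segment by smooth zonoids, and this is done explicitly by rotating an ellipse with the segment's endpoints as foci to get a prolate spheroid; once one knows the ball is a zonoid (support function a cosine transform of a constant) and that the class of zonoids is affine-invariant, each such spheroid is a smooth zonoid. Your argument instead mollifies $h_Z$ directly, forming the Minkowski integral $Z_\epsilon = \int_{SO(n)}\rho_\epsilon(g)(gZ)\,dg$: smoothness of $h_{Z_\epsilon}|_{S^{n-1}}$ follows from the usual left-translation trick for convolution on a homogeneous space of $SO(n)$, the zonoid property is preserved by Riemann-sum approximation together with Hausdorff-closedness of the zonoid class, and the preliminary step $Z\mapsto Z+\delta B$ handles full-dimensionality. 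Both proofs use the same two closure properties of zonoids (Minkowski sums/limits and affine invariance, together with the fact that the ball is a zonoid), but the paper's is a hands-on geometric construction reducing to segments, while yours is a one-step analytic mollification; yours is arguably more economical, while the paper's is more elementary and self-contained, avoiding any appeal to convolution theory on Lie groups.
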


\begin{proof} The support function of the Minkowski sum of finitely many convex bodies is the sum of support functions
of the summands. Thus, if all bodies are smooth then
their Minkowski sum is also smooth. Therefore it suffices to prove the lemma
for a segment. Consider any plane containing the segment $[a,b]$ and take the ellipse with focuses $a,b$ in that plane.
The segment can be approximated by ellipsoids obtained by rotating the ellipse
around the axis through $a$ and $b$. It remains to show that any ellipsoid is a zonoid. This is clear for the sphere
with center 0 because its support function is the cosine transform of a constant function. Since zonoids form
an affine-invariant class of convex bodies, all ellipsoids are zonoids.
\end{proof}

\begin{lemma}\label{zonoid1} If a compact convex body $A$ is a zonoid then the density $d_1(A)$ is normal.
\end{lemma}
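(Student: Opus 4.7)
The plan is to give a direct proof using the characterization of zonoids in terms of the cosine transform, bypassing any approximation by smooth bodies. First I would translate $A$ so that its centre of symmetry is the origin; since $d_1(A)$ depends only on the width function $s_A$, which is translation-invariant, we may assume $A$ is centrally symmetric about $0$.

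Now I invoke the characterization of zonoids recalled just before Lemma \ref{zonoid}: the support function of such a zonoid $A$ is the cosine transform of a non-negative even Borel measure on the sphere. Since the measure is even, it descends to a non-negative Borel measure $\nu$ on ${\rm Gr}(1,V)$ satisfying $s_A(H) = T_1(\nu)(H) = \int_{{\rm Gr}(1,V)} {\rm cos}(H,K)\, d\nu(K)$ (absorbing the factor of $2$ relating $s_A$ to $h_A$ into $\nu$). By the formula $d_1(A)(\xi) = s_A(H)\cdot {\rm vol}_1(\xi) = \delta_{1,s_A}(\xi)$ that opens the proof of Proposition \ref{d_1 normal}, we have $d_1(A) = \delta_{1,s_A}$. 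Applying Proposition \ref{cos-transf*} with $k=1$ and $\Phi = s_A = T_1(\nu)$ yields $d_1(A) = \delta_{1,s_A} = \chi_1(\mu_{1,\nu})$, which lies in $\mathfrak{n}_1$ by Definition \ref{normal density}.

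The main obstacle is conceptual rather than technical: one must verify that Proposition \ref{cos-transf*} genuinely covers the case of an arbitrary non-negative Borel measure $\nu$, not just one of the form $\phi(H)\,dH$ with $\phi$ continuous, because the measure representing a general zonoid may well be singular (e.g.\ a sum of point masses when $A$ is a zonotope). Inspecting the proof of Proposition \ref{cos-transf*}, which merely substitutes $d\nu$ for $\phi\,dH$ in the argument for Proposition \ref{cos-transf}, confirms that the substitution is legitimate as long as $\mu_{1,\nu}$ is defined by the appropriate Fubini-type formula.

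An alternative route would use Lemma \ref{zonoid} to approximate $A$ in Hausdorff metric by smooth zonoids $A_i$, apply Proposition \ref{d_1 normal} to get $d_1(A_i) \in \mathfrak{n}_1$, and take a limit. However, since $\mathfrak{n}_1$ is explicitly described as a \emph{non-closed} subspace of $C({\rm Gr}(1,V))$, this limiting argument cannot conclude by soft closure considerations; one would still need to produce a representing normal measure for the limit, which is precisely what the cosine-transform route supplies directly. Hence I prefer the direct approach above.
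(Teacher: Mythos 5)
Your proof is correct and follows the same route as the paper: reduce to $A$ centered at the origin, take the measure $\nu$ on ${\rm Gr}(1,V)$ with $T_1(\nu)=s_A$, and apply Proposition \ref{cos-transf*} to conclude $d_1(A)=\delta_{1,s_A}=\chi_1(\mu_{1,\nu})$. Your additional remarks about why the smooth-approximation route is insufficient (since $\mathfrak{n}_1$ is not closed) are accurate, though not needed for the argument.
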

\begin{proof} We may assume that $A$ has center 0. Let $\nu $ be the even measure
on the sphere, i.e., the measure on ${\rm Gr}(1,V)$, such that $T_1(\nu ) = s_A$. Then
$$d_1(A) = \delta_{1,s_A} = \chi_1(\mu _{1,\nu})$$
by Proposition \ref{cos-transf*}.
\end{proof}
\begin{theorem}\label{A_1,...,A_k}
Let $A_1,\ldots,A_k$ be centrally symmetric compact convex sets in $V$. Assume that each $A_i$ is either smooth, or is a zonoid, or else is the Minkowski sum of the bodies of those two types.
Then
$$d_1(A_1)\cdot \ldots \cdot d_1(A_k) = k!\,d_k(A_1, \ldots, A_k).$$
\end{theorem}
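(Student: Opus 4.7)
The plan is to argue in three stages, reducing the general hypothesis step by step to the situation covered by Proposition \ref{d_1,d_k}.

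First, I reduce to the case where each $A_i$ is either smooth or a zonoid. Writing each $A_i = B_i + Z_i$ with $B_i$ smooth and $Z_i$ a zonoid (either summand allowed to be a point), the width function, and hence the $1$-density $d_1$, is additive under Minkowski sum, so
$$\prod_{i=1}^k d_1(A_i) = \prod_{i=1}^k \bigl( d_1(B_i) + d_1(Z_i) \bigr) = \sum_{X \in \{B,Z\}^k}\, \prod_{i=1}^k d_1(X_i).$$
Since the mixed volume ${\rm V}_k$ is multilinear in its arguments under non-negative Minkowski combinations, $d_k(A_1,\ldots,A_k)$ expands into exactly the same sum, with each $\prod_i d_1(X_i)$ replaced by $k!\,d_k(X_1,\ldots,X_k)$. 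Thus the general case follows once the theorem is established when every $A_i$ is either smooth or a zonoid.

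Second, I reduce this special case to the all-smooth case by approximation. Let $I\subseteq\{1,\ldots,k\}$ index the zonoids and $J$ its complement. By Lemma \ref{zonoid}, each zonoid $A_i$, $i\in I$, is the Hausdorff limit of smooth zonoids $A_i^{(n)}$, which are in particular smooth convex bodies; for $i \in J$ set $A_i^{(n)} = A_i$. The right-hand side $k!\,d_k(A_1^{(n)}, \ldots, A_k^{(n)})$ converges pointwise to $k!\,d_k(A_1, \ldots, A_k)$ by continuity of mixed volumes in Hausdorff metric. On the left-hand side, by the argument in Lemma \ref{zonoid1} each $d_1(A_i^{(n)})$ with $i \in I$ arises from a non-negative normal measure; since products of non-negative normal measures are non-negative (from the defining formula via $P_{p,q}$-preimages), Proposition \ref{continuous} applies iteratively to give $\prod_{i\in I} d_1(A_i^{(n)}) \to \prod_{i\in I} d_1(A_i)$. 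Multiplying by the fixed factor $\prod_{i \in J} d_1(A_i)$ and applying Proposition \ref{continuous} once more, I obtain $\prod_i d_1(A_i^{(n)}) \to \prod_i d_1(A_i)$, completing the reduction to the case where every $A_i$ is smooth.

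Third, the all-smooth case follows from Proposition \ref{d_1,d_k} by polarization. If $A_1, \ldots, A_k$ are smooth centrally symmetric, then $A(\lambda) = \lambda_1 A_1 + \ldots + \lambda_k A_k$ is again smooth centrally symmetric for $\lambda_i \ge 0$ (support functions add and remain $C^\infty$), so Proposition \ref{d_1,d_k} gives $(d_1(A(\lambda)))^k = k!\, d_k(A(\lambda))$. Using $d_1(A(\lambda)) = \sum_i \lambda_i\, d_1(A_i)$, the commutativity of the ring $\mathfrak n$, and the multilinearity and symmetry of the mixed volume, one reads off the coefficient of $\lambda_1\cdots\lambda_k$ on each side: the left side contributes $k!\,d_1(A_1)\cdots d_1(A_k)$ and the right side $k!\cdot k!\,d_k(A_1, \ldots, A_k)$. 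Dividing by $k!$ yields the desired identity in the smooth case, and the theorem follows.

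The main obstacle is the correct use of Proposition \ref{continuous} in the second step. The hypothesis on $A_i$ is tailored precisely so that the possibly sign-indefinite part (coming from a smooth body via the inverse cosine transform) does not need to be approximated, while the part that is approximated consists of zonoids whose approximations yield non-negative normal measures at every stage, keeping the iterative application of Proposition \ref{continuous} legitimate.
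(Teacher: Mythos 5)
Your proposal is correct and is essentially the same argument as the paper's: both rest on the all-smooth case obtained by polarizing Proposition \ref{d_1,d_k}, on the additivity $d_1(A+B)=d_1(A)+d_1(B)$, on approximation of zonoids by smooth zonoids via Lemma \ref{zonoid}, and on Proposition \ref{continuous} to pass to the limit on the left-hand side. The only difference is expository: you make the multilinear $2^k$-term decomposition (your Step 1) an explicit reduction, whereas the paper first approximates $Z_i$ by $Z_{ij}$ so that each whole $A_i+Z_{ij}$ is already smooth and performs the same term-by-term expansion implicitly inside the limit argument.
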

\begin{proof} Note that $d_1(A+B) = d_1(A) + d_1(B)$, so all densities $d_1(A_i)$ are normal
by Proposition \ref{d_1 normal} and Lemma \ref{zonoid1}. Thus
the multiplication of $d_1(A_i)$ makes sense.
Assume first that all $A_i$ are smooth convex bodies.
The expression $(d_1(\lambda _1A_1 + \ldots + \lambda _kA_k))^k$
is a homogeneous polynomial of degree $k$ in $\lambda _i$. The polarization formula for this polynomial
reduces 
the theorem to Proposition \ref{d_1,d_k}. 

More generally, assume that each convex body is of the form $A_i +Z_i$, where $Z_i$ is a zonoid. 
 Using Lemma \ref{zonoid}, we can
approximate $Z_i$ by smooth zonoids $Z_{ij}$.
Then
$$d_1(A_1+Z_{1j})\cdot \ldots \cdot d_1(A_k+Z_{kj}) = k!\,d_k(A_1+Z_{1j}, \ldots, A_k+Z_{kj})$$
for any $j$. 
As $j \to \infty$, we have $d_1(A_i+Z_{ij}) \to d_1(A_i +Z_i)$
and $d_k(A_1+Z_{1j},\ldots,A_k+Z_{kj}) \to d_k(A_1+Z_1, \ldots, A_k+Z_k)$.
Moreover, the measures defining the densities $d_1(Z_{ij})$ are non-negative. Thus 
the assumptions of Proposition \ref{continuous} are fulfilled for all sequences $\{d_1(Z_{ij})\}$, where $ i=1,\ldots,k$. It follows 
that the product on the left hand side tends to $d_1(A_1+Z_1)\cdot \ldots \cdot d_1(A_k + Z_k)$. 
\end{proof}
\medskip
\noindent
{\it Proof of Theorem \ref{second}.} Using an arbitrary Euclidean metric in the tangent space $T_xX$,
identify $T_xX$ with its dual $T^*_xX$. Let $H$ be the subspace generated
by $\xi _1, \ldots, \xi_k \in T_xX$. The definition of $D_k({\mathcal E})$ in \ref{products} implies
$$D_k({\mathcal E})(\xi_1 \wedge \ldots \wedge \xi_k) = {\rm V}_k(\pi _H{\mathcal E}(x))\cdot
 {\rm vol}_k(\xi _1 \wedge \ldots \wedge \xi_k).$$
This shows that the density $D_k({\mathcal E})$ and the mixed density
$D_k({\mathcal E}_1, \ldots, {\mathcal E}_k)$ on $T_xX$ coincide with $d_k({\mathcal E}(x))$ and
$d_k({\mathcal E}_1(x), \ldots, {\mathcal E}_k(x))$, respectively. Therefore the required assertion
follows from Theorem  \ref{A_1,...,A_k}.\hfill{$\square $}

\subsection{Crofton formula}\label{Crofton}
 For $i=1,\ldots,n$, let $E_i$ be a finite-dimensional
real vector space with scalar product $\langle.,.\rangle_i$ and let $E_i^*$ be the dual Euclidean space. We denote by $S_i$ and $S_i^*$ the unit spheres in
$E_i$ and $E_i^*$, respectively. Put
$E=E_1\times\ldots\times E_n, \ E^*=E_1^*\times \ldots \times E_n^*,\ S=S_1\times\ldots\times S_n$, and $S^*= S_1^*\times\ldots\times S_n^*$.  We consider a point $s_i^* \in S_i^*$ as a linear function on $E_i$ and,
also, as a linear function on $E$ obtained by an obvious lifting using the projection $ E \to E_i$.
Thus, the $n$-tuple $s^* =(s_1^*,\ldots,s_n^*) \in S^*$ is a system of functions on $E$.
Let $ds_i$ and $ds_i^*$ be the Euclidean volume densities on $S_i $ and $S_i^*$, respectively.
We denote by $ds$ and $ds^*$ their normalized products, i.e., the densities on $S$ and $S^*$
equal to
$$ds = {1\over {\sigma }}\prod _i ds_i,\ ds^* = {1\over {\sigma }}\prod _i ds_i^*,$$
where ${\sigma } $ is the product of volumes of the unit spheres $S_i$ (or $S_i^*$).

Let $X \subset S$
be an embedded  submanifold of dimension $n$.
The number of isolated common zeros
of the system of functions
$s^*$ on $X$ is denoted by $N_X(s^*)$. By definition, the average number of isolated common zeros of
all such systems is the integral
$${\mathfrak M}_X = \int _{S^*}\, N_X(s^*)\,ds^*.$$ 
\noindent
We now state a theorem showing that the intergral exists and
computing its value.
For $x = (x_1, \ldots, x_n) \in S$ consider the tangent spaces
$T_i = T_{x_i}(S_i) =\{\xi_i \in E_i\ \vert  \langle \xi_i,x_i\rangle_i = 0\}\subset E_i$.
For
$\xi = (\xi_1,\ldots,\xi_n) \in T = T_1\oplus \ldots \oplus T_n$, write $g_i(\xi_i) = \langle\xi_i,\xi_i\rangle_i$ 
and ${\rm vol }_{1,i}(\xi) =\sqrt{g_i(\xi_i)}$. Then ${\rm vol}_{1,i}$ are 1-densities on $S$.
We recall that the product of 1-densities is defined in \ref{normal}
and use the notations introduced there.

\begin{theorem}\label{prod}{\rm(Crofton formula for the product of spheres).}
$${\mathfrak M}_X = {1\over {\pi^n}}\int_X {\rm vol}_{1 ,1}\cdot \ldots \cdot {\rm vol}_{1,n}.$$
\end{theorem}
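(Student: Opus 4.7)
The plan is to use a double fibration and reduce the Crofton formula for the product of spheres to the classical one-sphere formula, organized via the ring of normal densities from Section \ref{ring}. Introduce the incidence variety
$$
Z = \{(x,s^*)\in X\times S^* \mid s_1^*(x_1)=\ldots=s_n^*(x_n)=0\},
$$
with projections $p\colon Z\to X$ and $q\colon Z\to S^*$. A transversality and dimension count gives $\dim Z=\dim S^*$, and for generic $s^*\in S^*$ the fiber $q^{-1}(s^*)$ maps bijectively onto the set of isolated common zeros of $s^*$ on $X$. Consequently
$$
\mathfrak{M}_X
=\int_{S^*}N_X(s^*)\,ds^*
=\int_Z q^*(ds^*)
=\int_X p_*q^*(ds^*),
$$
and the theorem reduces to proving the identity $p_*q^*(ds^*)=\tfrac{1}{\pi^n}\,{\rm vol}_{1,1}\cdot\ldots\cdot{\rm vol}_{1,n}$ of $n$-densities on $X$.

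To prove this identity, I would factor everything over the product structure of $S^*$. Set $Z_i=\{(x,s_i^*)\in X\times S_i^* \mid s_i^*(x_i)=0\}$ with projections $p_i,q_i$; then $Z$ is the fibered product $Z_1\times_X\ldots\times_X Z_n$, and $ds^*=\sigma^{-1}\prod_i ds_i^*$. Iterated Fubini along the fibers of $p$ shows that $p_*q^*(ds^*)$ equals the product, in the ring $\mathfrak{n}(X)$, of the 1-densities $\Omega_i:=p_{i*}q_i^*(\sigma_i^{-1}ds_i^*)$. Each $\Omega_i$ is then computed directly: the fiber $p_i^{-1}(x)$ is the equator of $S_i^*$ orthogonal to $x_i$, and differentiating $s_i^*(x_i)=0$ along a tangent vector $\xi\in T_xX$ shows that the Jacobian of $q_i$ transverse to this equator is proportional to $|\xi_i|={\rm vol}_{1,i}(\xi)$; a direct spherical integration --- the classical single-sphere Crofton formula --- then yields $\Omega_i=\tfrac{1}{\pi}\,{\rm vol}_{1,i}$.

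The crux of the proof is the claim in the previous paragraph that $p_*q^*(ds^*)=\Omega_1\cdot\ldots\cdot\Omega_n$, where the product is taken in $\mathfrak{n}(X)$ rather than as a (nonsensical) pointwise product of 1-densities. Each $\Omega_i$ is a smooth 1-density, hence normal by Proposition \ref{1-normal}, so the right-hand side is well-defined. To match the Fubini computation with the ring product, I would pass to the affine-Grassmannian model of Section \ref{ring}: at each $x\in X$, the round density on the equator $\{s_i^*\in S_i^*\mid s_i^*(x_i)=0\}$ pulls back to a normal measure on ${\rm Gr}_a(1,T_xX)$ representing $\Omega_i(x)$, and the fibered-product description of $Z$ corresponds exactly to the iterated-intersection definition of the product of normal measures (Lemma \ref{long product} and its corollary). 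The main obstacle is precisely this translation between the spherical geometry of the equators of the $S_i^*$ and the affine-Grassmannian picture; once it is achieved, the theorem follows immediately by multiplying the $n$ factors $\Omega_i=\tfrac{1}{\pi}\,{\rm vol}_{1,i}$, $i=1,\ldots,n$.
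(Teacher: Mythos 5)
Your outline traces the paper's own route: the double-fibration identity $\mathfrak{M}_X=\int_X p_*q^*(ds^*)$ (the paper's Proposition \ref{domik}), the one-sphere computation $\Omega_i=\tfrac1\pi\,\mathrm{vol}_{1,i}$, and the recognition that the product must be taken in the ring $\mathfrak{n}$ of normal densities. So the architecture is correct. But the step you flag as the ``main obstacle'' --- that the iterated Fubini over the product of equators equals the ring product $\Omega_1\cdots\Omega_n$ --- is not a small translation waiting to be ``achieved''; it is where essentially all of the paper's proof lives (the explicit formula of Proposition \ref{omega-value}, the determination of the normal measure representing $\mathrm{vol}_{1,g_i}$, and the induction on $n$ that follows).

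Two things in particular need to be supplied before the proposal becomes a proof. First, the equator over $x_i$ only parametrizes directions in $T_i=T_{x_i}S_i$, whereas a normal measure lives on the affine Grassmannian; to get the measure representing $\Omega_i$ you must invert the cosine transform on the constant gauge function $\phi\equiv1$ (obtaining $\Phi=\sigma_{m_i-1}/(2\mathrm{v}_{m_i-1})$) and assemble the specific measure (\ref{mu}) that carries both a direction part over $\mathrm{Gr}(1,T_i)$ and an affine-offset part along each line --- this is precisely where the factor $1/\pi$ is accounted for. Second, the claimed ``exact correspondence'' between the fibered-product description of $Z$ and the iterated-intersection definition of $\mu_1\cdots\mu_n$ silently uses that for $w_i\in W_i\subset T_i$ the vectors $w_1,\ldots,w_n$ are automatically mutually orthogonal, so that the projected volume $\mathrm{vol}_w(\Pi_\xi)$ equals the Lebesgue measure of the set of offsets whose intersection of hyperplanes meets $\Pi_\xi$, with no extra Gram determinant; you should make this explicit and then carry out the induction peeling off one factor $T_n$ at a time, as in the paper's formula (\ref{integrals}). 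None of this would fail, but as written the crux is named rather than proved.
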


\noindent
In what follows, we use some standard notations. Namely, $\sigma _p$
denotes the volume of the $p$-dimensional unit sphere and ${\rm v}_q$
the volume of the $q$-dimensional unit ball.
\begin{example} {\rm Let $C_i \subset S_i,\ i=1,\ldots,n,$ be a circle with center 0 and let
$X = C_1\times \ldots \times C_n \subset S_1\times \ldots \times S_n$ be the  
$n$-dimensional torus. 
For $x=(x_1,\ldots,x_n) \in X$ and $\xi=(\xi_1,\ldots,\xi_n)\in T_xX$ we have ${\rm vol}_{1,i}(\xi) =\vert \xi _i \vert.$
Observe that for $n$ pairwise orthogonal segments $A_1, \ldots, A_n$ of length 1 in ${\mathbb R}^n$
one has ${\rm V}(A_1,\ldots,A_n) =1/ n!$, hence 
$d_1(A_1)\cdot \ldots \cdot d_1(A_n) $ is the Euclidean volume density by Theorem \ref{A_1,...,A_k}.
Apply this to ${\mathbb R}^n = T_xX$ and take $A_i $ tangent to $C_i$, so that
$d_1(A_i)(\xi) =\vert \xi _i \vert$. 
It follows
that the restriction of the $n$-density ${\rm vol}_{1,i}\cdot \ldots \cdot {\rm vol}_{1,n}$ to $X$
is the Riemannian volume density. Therefore
Theorem \ref{prod} implies
$${\mathfrak M}_X = {1\over{\pi^n}}\int _X {\rm vol}_{1,1}\cdot \ldots \cdot {\rm vol}_{1,n}
= 2^n.$$}
\end{example}
\begin{example}{\rm (Crofton formula for the sphere).}
{\rm Let $g$ be the Euclidean metric on $E={\mathbb R}^N$, 
$B$ the unit ball in $E$, $S =\partial B$. The $k$-density on $S$ defined by the induced metric is denoted by ${\rm vol}_{k,g}$.
Put $E_1 =\ldots =E_n = E$ and consider a manifold $X\subset S$, ${\rm dim}\,X =n $, 
embedded diagonally in the product of $n$ spheres $S\times \ldots \times S$.
The classical Crofton formula for the sphere 
$${\mathfrak M}_X ={2\over \sigma _n}\int_X{\rm vol}_{n,g}$$
follows from Theorem \ref {prod}.
Indeed, restricting all densities to $X$ we get
$ {\rm vol}_{1,i} = {\rm vol}_{1,g} = {1\over 2}d_1(B)$ for all $i$,
where $d_1(B)$ is a $1$-density in $E$. 
Hence, from Theorem \ref {A_1,...,A_k} we obtain
$${1\over {\pi ^n}}( {\rm vol}_{1,1}\cdot \ldots \cdot {\rm vol}_{1,n}) =
{d_1(B)^n \over (2\pi)^n}
={n!d_n(B) \over (2\pi)^n} = {n!{\rm v}_n\over
(2\pi)^n}
{\rm vol}_{n,g} 
= {2\over \sigma _n}{\rm vol}_{n,g},$$
where the last equality follows from the relations
$${\rm v}_n\sigma _n =
 {\pi ^{n/2}\over \Gamma ({n\over 2} +1)} \cdot (n+1)
{\pi ^{{n+1}\over2}\over
 \Gamma ({{n+1}\over 2 }+1)}= {2(2\pi)^n \over n!}.$$ }
\end{example}
\bigskip

\noindent
The proof of Theorem \ref{prod} requires some preparations.
In integral geometry, there is a standard technique producing an $n$-density $\Omega $ on $S$, such that
$${\mathfrak M}_X = \int _X \Omega$$
for any $X \subset S$. Namely, $\Omega $ is obtained from $ds^*$ by the pull-back and push-forward operations
in a certain double fibration
$$\begin{matrix}
 & &\ \Gamma & &\cr
             &    & ^{\pi_1 }\swarrow \  \   \searrow  ^{\pi_2} & &\cr
                               && \ \ S\hspace{30pt} S^*, &&\cr
\end{matrix}$$
see \cite {PF}, \cite{GS}, \cite{Ka2}, \cite{Sh}.

Let $\Gamma \subset S\times S^*$
be the submanifold defined by the equations
$x^*_i(x_i) = 0,\  i=1,\ldots,n,$  where $x=(x_1,\ldots,x_n) \in S, \,x^*=(x_1^*,\ldots,x_n^*)\in S^*$.
The projection mappings $S\times S^* \to S$ and $S\times S^* \to S^*$, restricted to $\Gamma$,
are denoted by $\pi_1$ and $\pi_2$, respectively. With these notations, one has the following
proposition.
\begin{proposition}\label {domik}
Let $\Omega =\pi_{1*}\pi_2^*(ds^*)$. Then $${\mathfrak M}_X = \int_X \Omega.$$
\end{proposition}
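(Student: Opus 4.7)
\medskip
\noindent
\emph{Proof proposal.} The plan is to run the standard double-fibration argument on the incidence variety $\Gamma \subset S \times S^*$. Set $\Gamma_X = \pi_1^{-1}(X) \subset \Gamma$. Since $\Gamma$ is cut out of $S \times S^*$ by the $n$ equations $x_i^*(x_i) = 0$, the fiber $\pi_1^{-1}(x)$ over $x = (x_1, \ldots, x_n) \in S$ is the product $\prod_i (S_i^* \cap x_i^\perp)$ of equatorial subspheres, of dimension $\dim S^* - n$. Hence $\pi_1 : \Gamma_X \to X$ is a smooth fibration of this fiber dimension, and
$$\dim \Gamma_X = \dim X + (\dim S^* - n) = \dim S^*,$$
so that $\pi_2$ restricts to a map $\Gamma_X \to S^*$ between equidimensional manifolds.

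The first step is the projection-formula identity
$$\int_X \pi_{1*}\pi_2^*(ds^*) = \int_{\Gamma_X} \pi_2^*(ds^*),$$
which is Fubini's theorem combined with the definition of push-forward of densities recalled just before Proposition \ref{T_1-preimage}: integrating the push-forward over the base amounts to integrating along each fiber and then over the base, which equals the integral over the total space $\Gamma_X$.

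The second step is the area formula for densities applied to $\pi_2|_{\Gamma_X} : \Gamma_X \to S^*$. Because $ds^*$ is a top-dimensional density on $S^*$ and densities transform by the absolute value of the Jacobian under a smooth map of equidimensional manifolds, one obtains
$$\int_{\Gamma_X} \pi_2^*(ds^*) = \int_{S^*} \#\bigl(\pi_2^{-1}(s^*) \cap \Gamma_X\bigr)\, ds^*.$$
By construction, the fiber $\pi_2^{-1}(s^*) \cap \Gamma_X$ is the set of $x \in X$ with $s_i^*(x_i) = 0$ for every $i$, i.e., the common zero set of the system $s^*$ restricted to $X$. For $ds^*$-almost every $s^*$ this set is finite and consists of isolated zeros, so its cardinality equals $N_X(s^*)$; combining the two displays then yields the stated equality.

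The main obstacle, in my view, is justifying the almost-everywhere identification of the fiber size with $N_X(s^*)$. This amounts to a Sard-type statement: the critical values of the smooth map $\pi_2 : \Gamma_X \to S^*$ form a $ds^*$-null subset of $S^*$, and at a regular value $\pi_2$ is a local diffeomorphism, so the fiber cardinality coincides with the number of isolated common zeros of the system. A secondary technical point is that $\pi_{1*}\pi_2^*(ds^*)$ is a bona fide $n$-density on $X$; this follows from the compactness of the fibers of $\pi_1$, which ensures that the integral along each fiber is finite.
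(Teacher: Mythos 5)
Your argument is correct and follows the standard double-fibration scheme that the paper's proof merely cites from \cite{PF} and \cite{GS}: the Fubini/projection-formula step turning $\int_X\pi_{1*}\pi_2^*(ds^*)$ into $\int_{\Gamma_X}\pi_2^*(ds^*)$, the area formula for the equidimensional map $\pi_2|_{\Gamma_X}$, and the Sard-type identification of fiber cardinality with $N_X(s^*)$ for $ds^*$-almost every $s^*$. The compactness of the $\pi_1$-fibers, which you note at the end, is indeed what makes $\Omega=\pi_{1*}\pi_2^*(ds^*)$ a well-defined $n$-density, so your expansion of the citation is complete and matches the intended argument.
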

\begin{proof}
See \cite{PF}, \cite{GS}.
\end{proof}
\noindent
From now on we identify $E_i$ with $E_i^*$ using given scalar products $\langle. , .\rangle_i$.
As a consequence, $S_i$ is identified with $S_i^*$, 
$E$ with $E^*$ and $S$ with $S^*$, respectively. Furthermore, 
$$\Gamma =\{(x,x^*)\in S\times S\ \vert \ \langle x_i,x_i^*\rangle_i \,= 0,\quad  i = 1,\ldots, n\}.$$
Let $\xi_i, \eta _i\in E_i$ be the components of $\xi, \eta \in E$,
respectively.

\begin{lemma}\label{tangent} If $\xi _i \in T_i$ and 
$\eta_i = -\langle\xi_i, x_i^*\rangle_i x_i $
for all $i,\ i=1,\ldots,n$, then
$(\xi, \eta) \in T_{(x,x^*)}\Gamma $.
\end{lemma}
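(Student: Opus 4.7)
The plan is to verify directly that $(\xi,\eta)$ satisfies the linearized equations cutting out $\Gamma$ inside $E\times E$. Since $\Gamma$ is the intersection of the product sphere $S\times S$ with the incidence loci $\{\langle x_i,x_i^*\rangle_i=0\}$, its tangent space at $(x,x^*)$ consists of pairs $(\xi,\eta)\in E\times E$ satisfying the three families of conditions: (i) $\langle\xi_i,x_i\rangle_i=0$ for all $i$ (tangency to $S$ at $x$); (ii) $\langle\eta_i,x_i^*\rangle_i=0$ for all $i$ (tangency to $S^*\cong S$ at $x^*$); (iii) $\langle\xi_i,x_i^*\rangle_i+\langle x_i,\eta_i\rangle_i=0$ for all $i$ (linearization of the incidence equations).

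First, condition (i) holds by the hypothesis $\xi_i\in T_i$, which is precisely the statement $\langle\xi_i,x_i\rangle_i=0$. Next, using $\eta_i=-\langle\xi_i,x_i^*\rangle_i\,x_i$ and the fact that $(x,x^*)\in\Gamma$ gives $\langle x_i,x_i^*\rangle_i=0$, I compute
\[
\langle\eta_i,x_i^*\rangle_i=-\langle\xi_i,x_i^*\rangle_i\,\langle x_i,x_i^*\rangle_i=0,
\]
which establishes (ii). Finally, using $\langle x_i,x_i\rangle_i=1$ (since $x_i\in S_i$),
\[
\langle\xi_i,x_i^*\rangle_i+\langle x_i,\eta_i\rangle_i=\langle\xi_i,x_i^*\rangle_i-\langle\xi_i,x_i^*\rangle_i\,\langle x_i,x_i\rangle_i=0,
\]
which is (iii). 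All three tangency conditions are met, so $(\xi,\eta)\in T_{(x,x^*)}\Gamma$.

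There is really no obstacle here: the content of the lemma is essentially a computation in coordinates adapted to the incidence variety, and the particular choice $\eta_i=-\langle\xi_i,x_i^*\rangle_i x_i$ is tailor‑made so that the symmetric pairing $\langle\xi_i,x_i^*\rangle_i+\langle x_i,\eta_i\rangle_i$ telescopes. The only point worth emphasizing, which is why the lemma will be useful later, is that the components $\eta_i$ depend linearly on $\xi_i$ and lie along $x_i$, so this gives an explicit splitting/section of the projection $\pi_1:\Gamma\to S$ at the level of tangent spaces, which is what one needs in order to compute the push‑forward $\pi_{1*}\pi_2^*(ds^*)$ appearing in Proposition~\ref{domik}.
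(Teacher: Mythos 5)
Your proof is correct and takes the same approach as the paper: identify the three families of linear equations cutting out $T_{(x,x^*)}\Gamma\subset E\oplus E$ and verify them directly for the given $(\xi,\eta)$ using $\langle\xi_i,x_i\rangle_i=0$, $\langle x_i,x_i^*\rangle_i=0$, and $\langle x_i,x_i\rangle_i=1$. The paper simply states ``these equations hold true'' without writing the computation; you have usefully spelled it out, and the closing observation about this giving a tangent-level section of $\pi_1$ correctly anticipates how the lemma is used in Proposition \ref{omega-value}.
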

\begin{proof} By the definition of $\Gamma $, the subspace $T_{(x,x^*)}\Gamma  \subset E\oplus E$ is given by
the equations
$$\langle\xi_i,x_i^*\rangle_i + \langle x_i,\eta_i\rangle_i\ =\  
 \langle\xi_i,x_i\rangle_i  \ = \ \langle \eta _i, x_i^*\rangle_i =0 $$
for all $i,\ i=1,\ldots,n$. If $\xi _i \in T_i$ and $\eta _i =-\langle\xi_i,x_i^*\rangle_ix_i$
for all $i$,
then these equations hold true.  
\end{proof}
\noindent
The vector space $T = T_x S= T_1\oplus\ldots\oplus T_n$ is equipped
with the scalar product
$\langle.\,,.\rangle_1 + \ldots + \langle.\, ,.\rangle_n$. For each $i$ denote by $W_i$ the unit sphere with center $0$ in $T_i$,
put $W=W_1\times \ldots \times W_n$ and take a point $w = (w_1,\ldots,w_n) \in W$. Let $\tilde w_i$
be an element of unit length in the highest component of the exterior algebra of $T_{w_i}W_i$, i.e., the
dual to a volume form of $W_i$.
\begin{lemma}\label{R-density} $i_{x_1\wedge\ldots\wedge x_n}ds^*$ is the density of Riemannian volume of $W$ divided by
${\rm vol}(S)$.
\end{lemma}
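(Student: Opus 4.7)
The plan is to exploit the product structure of $ds^*$ and carry out the contraction one factor at a time. First I would note that, under the identifications $E_i=E_i^*$, each vector $x_i$ lies in $T_{x_i^*}S_i^*$ (because $\langle x_i,x_i^*\rangle_i=0$) and has unit length; moreover the orthogonal complement of $\mathbb{R}x_i$ inside $T_{x_i^*}S_i^*$ is precisely $T_{x_i^*}W_i$. Hence $x_1\wedge\ldots\wedge x_n$ consists of exactly one vector drawn from each summand in the orthogonal decomposition $T_{x^*}S^*=\bigoplus_i T_{x_i^*}S_i^*$, and the contraction $i_{x_1\wedge\ldots\wedge x_n}ds^*$ is a well-defined density on the complementary subspace $\bigoplus_i T_{x_i^*}W_i = T_{x^*}W$.

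Next I would use the factorisation $ds^*=\frac{1}{\sigma}\prod_i ds_i^*$ as a product density on $T_{x^*}S^*$. For such a product density, contraction against a wedge of vectors drawn one per factor splits as a product of factor-wise contractions: this follows directly from the definition of a product density once one notes that densities ignore sign, so the ambient wedge can be freely regrouped by factors. This gives
$$i_{x_1\wedge\ldots\wedge x_n}ds^*=\frac{1}{\sigma}\prod_i i_{x_i}ds_i^*.$$

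The final step is to identify each factor. Since $ds_i^*$ is the Euclidean volume density on $T_{x_i^*}S_i^*$ and $x_i$ is a unit vector orthogonal to $T_{x_i^*}W_i$, adjoining $x_i$ to any parallelotope in $T_{x_i^*}W_i$ leaves its volume unchanged, so $i_{x_i}ds_i^*$ coincides with the Riemannian volume density $dw_i$ of $W_i$ at $x_i^*$. Putting the factors together, $i_{x_1\wedge\ldots\wedge x_n}ds^*$ is $\sigma^{-1}$ times the product Riemannian density on $W=\prod_i W_i$ equipped with the product metric, and $\sigma=\prod_i\mathrm{vol}(S_i)=\mathrm{vol}(S)$, which is the desired conclusion. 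The argument is essentially bookkeeping with product structures; the only mildly subtle point is verifying the product-contraction splitting, but this is immediate for densities.
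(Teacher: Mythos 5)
Your argument is correct and follows essentially the same route as the paper's: both rest on the product factorisation $ds^*=\sigma^{-1}\prod_i ds_i^*$ and the observation that $ds_i^*$ evaluates to the Riemannian density of $W_i$ once contracted against the unit vector $x_i$ orthogonal to $T_{w_i}W_i$; the paper compresses this into the single identity $ds_i^*(x_i\wedge\tilde w_i)=1$. Your extra discussion of the orthogonal splitting $T_{w_i}S_i^*=\mathbb{R}x_i\oplus T_{w_i}W_i$ and the factor-wise behaviour of contraction just unpacks what the paper treats as immediate.
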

\begin{proof} Note that $ds_i^*(x_i\wedge \tilde w_i) = 1$. Therefore, by the definition of $ds^*$,
we have
$ds^*(x_1\wedge \ldots \wedge x_n \wedge \tilde w_1 \ldots \wedge \tilde w_n) 
= {1/ {\rm vol}(S)}.$
\end{proof}
\noindent
In the following proposition, we compute the value of density $\Omega $ on the wedge product
$\theta _1 \wedge \ldots \wedge \theta_n$, where $\theta _1, \ldots ,\theta _n \in V$.
As a warning, we remark
that $\theta _i$ are not necessarily contained in $T_i$.

\begin{proposition}\label{omega-value} Let $\Pi _\theta$ be the parallelotope generated by $\theta _i.$
Take the image of $\Pi_\theta$ under the projection map $\pi _w : T \to {\mathbb R} w_1 +\ldots + {\mathbb R} w_n$ 
and denote by ${\rm vol}_w \Pi_\theta $ the volume of $\pi_w(\Pi _\theta )$. Then
$$\Omega(\theta _1 \wedge \ldots \wedge \theta _n) = {1 \over {\rm vol}(S)}\int_W
{\rm vol}_w(\Pi _\theta)\cdot dw_1\cdot\ldots\cdot dw_n,$$
where $dw_i$ is the Euclidean volume form on the sphere $W_i$.   
\end{proposition}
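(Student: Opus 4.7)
The plan is to unfold the definition $\Omega=\pi_{1*}\pi_2^*(ds^*)$ using the standard recipe for push-forward of densities along a fibration, and then identify the resulting determinant with $\mathrm{vol}_w(\Pi_\theta)$.

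First I would describe the fibre of $\pi_1$ over $x\in S$: it is $\{x\}\times\{x^*\in S\mid\langle x_i,x_i^*\rangle_i=0\ \forall i\}$, which, after the identifications $E_i\simeq E_i^*$, is exactly $\{x\}\times W$ with $W=W_1\times\ldots\times W_n$. Thus the push-forward formula reads
$$\Omega(\theta_1\wedge\ldots\wedge\theta_n) \;=\; \int_W \bigl(i_{\tilde\theta_1\wedge\ldots\wedge\tilde\theta_n}\pi_2^*ds^*\bigr)(w)\,,$$
where $\tilde\theta_i\in T_{(x,w)}\Gamma$ is any lift of $\theta_i$ under $d\pi_1$.

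Next I would choose these lifts by Lemma \ref{tangent}: writing $\theta_i=(\theta_{i,1},\ldots,\theta_{i,n})\in T_1\oplus\ldots\oplus T_n$, set $\tilde\theta_i=(\theta_i,\eta^{(i)})$ with $\eta^{(i)}_j=-\langle\theta_{i,j},w_j\rangle_j\, x_j$. Under $d\pi_2$ these lifts map to $\eta^{(i)}=\sum_j c_{ij}x_j$ with $c_{ij}=-\langle\theta_{i,j},w_j\rangle_j$, i.e.\ to vectors in the line-wise direct sum $\mathbb{R}x_1\oplus\ldots\oplus\mathbb{R}x_n$. If $v_1,\ldots,v_{N-n}$ are tangent vectors to the fibre $\{x\}\times W$ at $(x,w)$, which we identify with tangent vectors in $T_wW\subset T_1\oplus\ldots\oplus T_n$, then the multilinearity of $ds^*$ yields
$$\pi_2^*ds^*\bigl(\tilde\theta_1\wedge\ldots\wedge\tilde\theta_n\wedge v_1\wedge\ldots\wedge v_{N-n}\bigr) \;=\; |\det(c_{ij})|\cdot ds^*\bigl(x_1\wedge\ldots\wedge x_n\wedge v_1\wedge\ldots\wedge v_{N-n}\bigr).$$
Applying Lemma \ref{R-density}, the last factor equals $\frac{1}{\mathrm{vol}(S)}$ times the Riemannian volume density of $W$ evaluated on $v_1\wedge\ldots\wedge v_{N-n}$. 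Consequently,
$$\Omega(\theta_1\wedge\ldots\wedge\theta_n) \;=\; \frac{1}{\mathrm{vol}(S)}\int_W |\det(c_{ij}(w))|\,dw_1\cdots dw_n.$$

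The final step is to identify $|\det(c_{ij}(w))|$ with $\mathrm{vol}_w(\Pi_\theta)$. The vectors $w_1,\ldots,w_n$ lie in the mutually orthogonal summands $T_1,\ldots,T_n$ and are unit, so they form an orthonormal basis of $H_w:=\mathbb{R}w_1+\ldots+\mathbb{R}w_n$. The orthogonal projection $\pi_w:T\to H_w$ sends $\theta_i$ to $\sum_j\langle\theta_{i,j},w_j\rangle_j w_j$, whose coordinate matrix in the basis $(w_j)$ is exactly $(-c_{ij})$. Hence $\mathrm{vol}_w(\Pi_\theta)=|\det(c_{ij})|$, which completes the identification.

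The only step with real content is the third one—the $ds^*$-evaluation—because one must be careful that the $n$ wedge factors $\eta^{(i)}$ actually land in the $n$-dimensional subspace $\mathbb{R}x_1\oplus\ldots\oplus\mathbb{R}x_n$ transverse to the fibre of $\pi_1$, so that Lemma \ref{R-density} can be invoked and the determinant factors out cleanly. The remaining work, namely recognizing this determinant as a projection volume, is pure linear algebra exploiting the orthogonality of the summands $T_i$.
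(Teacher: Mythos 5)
Your proposal is correct and follows essentially the same path as the paper's proof: both use Lemma~\ref{tangent} to lift the $\theta_i$ to $T_{(x,w)}\Gamma$, factor the determinant $|\det\langle\theta_{i,j},w_j\rangle_j|$ out of the evaluation of $ds^*$, invoke Lemma~\ref{R-density} to reduce the remainder to the Riemannian volume of $W$ divided by $\mathrm{vol}(S)$, and identify the determinant with $\mathrm{vol}_w(\Pi_\theta)$. The only cosmetic difference is that you spell out the orthogonal-projection interpretation of the determinant in a separate step, where the paper merges it into a single displayed computation.
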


\begin{proof} Let 
$\theta _i = (\theta _{i1},\ldots, \theta_{in})$, where $\theta_{ij} \in T_j$ and 
put
$$\eta _{x,w,\theta_i} = -\sum _{j=1}^n \langle \theta _{ij} ,w_j\rangle_j \cdot x_j.$$
Then
$$h_i =(\theta _i, \eta _{x,w,\theta_i}) \in T_{(x,w)} \Gamma $$
by Lemma \ref{tangent}. Recall that $\Omega = \pi_{1*}\pi_2^*(ds^*)$.
Since $\pi_1^{-1}(x) = \{x\}\times W$, we have
$$\Omega (\theta _1 \wedge \ldots \wedge \theta_n) = \int _{\{x\}\times W}
i_{h_1\wedge \ldots \wedge h_n} \pi_2^*(ds^*) $$
by the definition of push-forward. The projection map $\pi_2$ 
sends $h_i$  to $\eta_{x,w,\theta _i}$ and defines a diffeomorphism between $ \pi_1^{-1}(x)$ and $W$. From the previous equality
it follows that 
$$\Omega (\theta _1 \wedge \ldots \wedge \theta_n) = \int _ W
i_{\eta_{x,w,\theta_1}\wedge \ldots \wedge \eta_{x,w,\theta_n}}\, ds^*.$$
Let $A$ be the matrix with entries $a_{ij} = \langle\theta_{ij}, w_j\rangle_j$. The definition of $\eta_{x,w,\theta _i}$
shows that
$$\eta _{x,w,\theta _1}\wedge \ldots \wedge 
\eta_{x,w,\theta _n} = \pm\, {\rm det}(A)\, x_1\wedge \ldots \wedge x_n
= \pm\, {\rm vol}_w(\Pi_\theta )\, x_1 \wedge \ldots \wedge x_n.$$
The density $ds^*$ is non-negative, so the above formula for $\Omega (\theta_1\wedge \ldots \wedge \theta_n)$ yields
$$\Omega (\theta _1 \wedge \ldots \wedge \theta _n) = \int_W{\rm vol}_w(\Pi _\theta)\cdot i_{x_1\wedge
\ldots \wedge x_n}ds^* .$$
Together with Lemma \ref{R-density} this completes the proof. 
\end{proof}
\noindent
{\it Proof of Theorem  \ref{prod}}. Let $D \subset T$ be a compact convex set of dimension $n$.
Proposition \ref{omega-value} gives an expression for the value of $\Omega $ on $D$. 
If $m_i = {\rm dim}\,T_i$ then
\begin{equation}\label{*}
\Omega (D) = {1\over \prod _i\sigma_{m_i}}\cdot \int_{W_1\times \ldots \times W_n}{\rm vol}_w(D)\cdot dw_1\cdot
\ldots \cdot dw_n.
\end{equation}
We have to prove
that \begin{equation}\label{**}
\Omega (D) = {1\over \pi^n} \,({\rm vol}_{1,1}\cdot \ldots \cdot{\rm vol}_{1,n})(D).
\end{equation}
The density ${\rm vol}_{1,i}$ is the pull-back of the Riemannian 1-density ${\rm vol}_{1,g_i}$ on $T_i$ under the 
projection map $\pi_i: T \to T_i$.  According to (\ref{k-density}), the gauge function $\phi $ on ${\rm Gr}(1,T_i)$, associated with
 ${\rm vol}_{1,g_i}$, equals 1. If $\Phi$ is the preimage of $\phi $ under
the cosine transform then 
$$ \Phi = {\sigma _{m_i-1}\over 2{\rm v}_{m_i-1}}$$
by a direct calculation.
Applying Proposition \ref{cos-transf}, we get ${\rm vol}_{1,g_i} =\chi _1(\mu _{g_i})$
for the normal measure $\mu _{g_i}= \mu _{1,\Phi}$ on ${\rm Gr}_a(1,T_i)$ defined by ({\ref {affine-to-vector}).
Namely, for a subset $A \subset {\rm Gr}_a(1,T_i)$ and for $H \in {\rm Gr}(1,T_i)$ put
$H_A = \{h \in H \ \vert \  h+H^\perp \in A\}$.
Then
\begin{equation}\label{mu}
\mu _{g_i}(A) = {\sigma _{m_i - 1}\over 2{\rm v}_{m_i-1}}\ \int _{{\rm Gr}(1,T_i) }\lambda _i(H_A)\,dH ,
\end{equation}
where $\lambda _i$ is the Lebesgue measure corresponding to $g_i$ on the line $H$.

Assume first that $n = 1$, so that ${\rm vol}_{1,1} = {\rm vol}_{1,g_1}$. If $A \subset {\rm Gr}_a(1,T)$
is formed by affine hypersurfaces intersecting $D$, then

$$\Omega (D) = {1\over \sigma_{m_1}}\int_{W_1}{\rm vol }_w(D)\, dw =
{\sigma _{m_1-1}\over \sigma _{m_1}} \int _{{\rm Gr}(1,T_1)}\lambda _1(H_A)\,dH=$$

$$= {\sigma_{m_1-1}\over \sigma _{m_1}}\cdot {2{\rm v}_{m_1-1}\over \sigma_{m_1-1}}\cdot  \mu_{g_1}(A)={\mu_{g_1}(A)\over \pi} = {{\rm vol}_{1,1}(D)\over\pi}.$$

\noindent For $n = 1$ the theorem is proved. 

Assume now that $n > 1$ and let $\mu_i = \pi_i^*(\mu_{g_i})$. By
Proposition \ref{n-pull-back} we have ${\rm vol}_{1,i} = \chi_1(\mu _i)$. Hence
$${\rm vol}_{1,1} \cdot \ldots \cdot {\rm vol}_{1,n} = \chi_n(\mu_1\cdot \ldots \cdot \mu_n)$$
by the definition of
the product of normal densities. Therefore
$$({\rm vol}_{1,1} \cdot \ldots \cdot {\rm vol}_{1,n})(D) = (\mu_1\cdot \ldots \cdot \mu_n)({\mathcal J}_n(D)),$$
where
$$  {\mathcal J} _n(D)=\{(H_1, \ldots, H_n)\in ({\rm Gr}_a(1,T))^n\ \vert \ D \cap H_1 \cap \ldots \cap H_n
\ne \emptyset\}.$$
The support of $\mu _i$ is the set ${\mathcal G}_i$ of affine  hyperplanes containing affine shifts of $
\oplus_{j\ne i}T_j$. For any set $A\subset{\mathcal G}_i$ its measure $\mu_i(A)$ is equal to the value of $\mu_{g_i}$
on the projection of $A$ onto ${\rm Gr}_a(1,T_i)$. The product measure
$\mu_1\cdot \ldots \cdot \mu_n$ is supported on the surface ${\mathcal G}\subset {\rm Gr}_a(n,T)$
formed by affine subspaces $G_1\cap \ldots \cap G_n$, where $G_i \in {\mathcal G}_i$.
For $G \in {\rm Gr}_a(1,T_n)$ put $\bar G =\pi _n^{-1}(G)$. Let
$${\mathcal I}(D \cap \bar G) = \{(G_1,\ldots G_{n-1}) \ \vert \  G_i \in {\mathcal G}_i,
D \cap \bar G \cap G_1 \cap \ldots \cap G_{n-1} \ne \emptyset \}.$$
Then
$$(\mu _1\cdot \ldots \cdot \mu _n)({\mathcal J}_n(D))= \int _{G \in {\rm Gr}_a(1, T_n)} 
(\mu _1 \cdot \ldots \cdot \mu _{n-1})({\mathcal I}(D \cap \bar G))\, d\mu_{g_n}$$
by Fubini's theorem.
Let $U = T_1+\ldots+T_{n-1} $ and let $\pi _U : T \to U$ be the projection map.
We use the same notation $\mu_1\cdot \ldots \cdot \mu_{n-1}$ for the product of measures $\mu_i$
on ${\rm Gr}_a(n-1, U)$ and for its pull-back under $\pi _U$ on ${\rm Gr}_a(n-1,T)$. Keeping this in mind, we
get
$$(\mu_1\cdot \ldots \cdot \mu _{n-1})({\mathcal I}(D\cap \bar G)) =
(\mu_1\cdot \ldots \cdot \mu _{n-1})({\mathcal J}_{n-1}(\pi _U(D\cap \bar G))).$$
Applying the definition of $\Omega $ to $U$, we get an $(n-1)$-density,
to be denoted by $\Omega _U$. For $H \in {\rm Gr}(1,T_n)$ we denote by $H^\perp $
the orthogonal complement to $H$ in $T$. By induction and by (\ref{mu}) for $i=n $ we have
$$
({\rm vol}_{1,1}\cdot \ldots \cdot {\rm vol}_{1,n})(D)=
 \int _{G \in {\rm Gr}_a(1,T_n)}({\rm vol}_{1,1}\cdot \ldots \cdot{\rm vol}_{1,n-1})
(\pi _U(D\cap \bar G))\, d \mu_{g_n} =
$$
$$
= {\pi}^{n-1}\int _{G\in {\rm Gr}_a(1,T_n)}\, \Omega _U(\pi _U(D\cap\bar G))\, d\mu_{g_n}.$$
Finally, using the expression (\ref{mu}) for $\mu _{g_n}$, we obtain
\begin{equation}\label{integrals}
({\rm vol}_{1,1}\cdot \ldots \cdot {\rm vol}_{1,n})(D)=
\end{equation}
$$=
{{\sigma _{m_n-1}}\pi ^{n-1}\over 2{\rm v}_{m_n-1}}\int _{H \in {\rm Gr}(1,T_n)}
\int _{h\in H}
\Omega _U(\pi _U(D\cap (h+H^\perp))\,dh\, dH.$$
On the other hand, by Fubini's theorem we can write (\ref{*}) in the form
$$\Omega (D) = {1\over\prod _{i=1}^n \sigma_{m_i}} \cdot
\int _{W_n}F(w_n)\ dw_n,$$
where $$F(w_n) = \int _{W_1\times \ldots \times W_{n-1}} {\rm vol}_w(D) \, dw_1 \cdot \ldots \cdot w_{n-1}.$$
Let $\omega $ and $\omega ^\prime $ be the subspaces generated by $w_1, \ldots, w_{n-1}, w_n$ and $w_1, \ldots, w_{n-1}$, respectively. Denote by $H$ the line generated by $w_n$. Then
$${\rm vol}_w(D) = \int _H{\rm V}_{n-1}(\pi _\omega (D)\cap(h+H^\perp))\,dh.$$
Plug this in the integral $F(w_n)$ and observe that
$$\pi _\omega(D)\cap(h+H^\perp) = h+\pi_{\omega ^\prime}(D \cap (h+H^\perp))
= h+\pi_{\omega ^\prime}\pi_U(D \cap (h+H^\perp)).$$
As a result, we get
$$F(w_n) = \int_H \int _{W_1\times\ldots\times W_{n-1}}{\rm V}_{n-1}(\pi _{\omega^\prime}\pi_U(D\cap
(h+H^\perp)))=
$$
$$=\prod_{i=1}^{n-1}\sigma_{m_i} \cdot \int _H\Omega _U(\pi _U(D\cap(h+H^\perp))\,dh$$
by the definition of $\Omega _U$. Therefore
$$\Omega (D) = {1\over \sigma_{m_n}} \int_{W_n}\int _H\Omega _U(\pi _U(D\cap(h+H^\perp))\, dh\, dw_n.$$
Passing from the sphere to the projective space, we obtain
$$\Omega (D) = {\sigma _{m_n-1}\over \sigma_{m_n}} 
\int_{{\rm Gr}(1,T_n)}\int _H\Omega _U(\pi _U(D\cap(h+H^\perp))\, dh\, dH.
$$
Since
$\sigma _{m_n} = 2\pi \cdot {\rm v}_{m_{n-1}}$, equality (\ref {**}) follows from (\ref{integrals}).
\hfill $\square$

\subsection
{ Proof of Theorem \ref{first}.}\label{proof1}
We use the notations introduced in \ref{Crofton} with $E_k = V_k^*$. Then  $S_k \subset V_k^* $, $S_k^* \subset V_k$, and
$g_k$ is the pull-back of the metric form on $V_i$ under $\theta _k : X \to S_k ^*\subset V_k$.
For $s^* = (s_1^*, \ldots, s_n^*) \in S^* = S_1^* \times \ldots \times S_n^*$
we denote by $N_U(s_1^*, \ldots, s_n^*)$ the number of isolated common zeros
of $s_1^*, \ldots, s_n^*$ in an open set $U \subset X$.
We want to prove that $N_X(s_1^*, \ldots, s_n^*)$ is integrable and
compute the integral whose value divided by $\sigma $ is denoted by ${\mathfrak M}_X(V_1,\ldots,V_n)$, see \ref{average}.
Theorem \ref{first} is a consequence of the equality
\begin{equation}\label{main}
{\mathfrak M}_X(V_1, \ldots, V_n) = {1\over {\pi ^n}} \int_X {\rm vol}_{1,g_1}\cdot \ldots \cdot {\rm vol}_{1,g_n}.
\end{equation}
Indeed, if ${\mathcal E}_i$ is the ellipsoid corresponding to $g_i$ then for all $x \in X, \xi \in T_x(X)$ one has
$${\rm vol}_{1,g_i}(\xi ) = {d_1({\mathcal E}_i(x))(\xi ) \over 2},$$
where $d_1({\mathcal E}_i(x))$ is considered as a translation invariant 1-density.
By Theorem \ref{A_1,...,A_k} we have
$${1\over {\pi }^n}{\rm vol}_{1,g_1}\cdot \ldots \cdot {\rm vol}_{1,g_n} =
{n!\over (2\pi)^n} d_n({\mathcal E}_1(x), \ldots , {\mathcal E}_n(x)).$$
By the definition of $d_n$ it follows that
$${1\over {\pi }^n}({\rm vol}_{1,g_1}\cdot \ldots \cdot {\rm vol}_{1,g_n})(\xi _1 \wedge \ldots \wedge \xi _n) =
{n!\over (2\pi)^n} {\rm V}_n({\mathcal E}_1(x), \ldots , {\mathcal E}_n(x)){\rm vol}_n(\Pi_\xi),$$
where ${\rm vol}_n $ is taken with respect to $g$ on $T_x(X)$ and the mixed volume
${\rm V}_n$ with respect to $g^*$ on $T_x^*(X)$. Combining this equality with (\ref{m-density}), we get 
Theorem \ref {first} from (\ref{main}).

Now, if the mapping $\theta = \theta _1 \times \ldots \times \theta _n$ is an embedding of $X$ into $S_1^* \times 
\ldots \times S_n^*$ then (\ref {main}) is a direct consequence of
Crofton formula (Theorem \ref {prod}) and equality (\ref{m-density}). In the general case, it is enough to prove
(\ref{main}) for an arbitrary relatively compact domain $U \subset X$. In fact, $X$ is the union of an increasing
sequence of such 
domains $U_i$.
Thus, if 
\begin{equation} \label{main1}
{\mathfrak M}_{U}(V_1, \ldots, V_n) = {1\over {\pi ^n}} \int_{U} {\rm vol}_{1,g_1}\cdot \ldots \cdot {\rm vol}_{1,g_n}
\end{equation}
for each $U $ then we get two increasing sequences on the both sides of
(\ref{main1}) for $U = U_i$ and (\ref{main}) is obtained 
by passing to the limit. We will now prove (\ref {main1}). Let $D$ be the set of critical points of $\theta $ in the 
closure $\bar U$ of a relatively compact domain $ U \subset X$. If $D\ne \emptyset$ then take a decreasing sequence of
relatively compact neighborhoods $D_i$ of $D$, such that $\cap\, D_i = D$.

\smallskip
\noindent
To finish the proof, we need three lemmas.
\begin{lemma} \label{final1}
If $\theta $ has no critical points in $U$ then (\ref {main1}) holds true.
\end{lemma}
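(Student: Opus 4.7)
The plan is to deduce the equality from the Crofton formula for the product of spheres (Theorem \ref{prod}). Since $\theta=\theta_1\times\ldots\times\theta_n$ has no critical points in $U$, it is an immersion of $U$ into $S=S_1\times\ldots\times S_n$; hence the relatively compact $\bar U$ admits a finite cover by open sets $W_1,\ldots,W_m\subset U$ on each of which $\theta$ restricts to an embedding onto an $n$-dimensional submanifold of $S$. I would fix a partition of unity $\{\rho_j\}$ subordinate to this cover.

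First I would rewrite the right-hand side of (\ref{main1}) as a pull-back. By Proposition \ref{1-normal} each ${\rm vol}_{1,i}$ is a smooth, hence normal, 1-density on $S$; by Theorem \ref{density-manifold} the pull-back $\theta^*$ is a homomorphism of rings of normal densities, and from the definition of $g_i$ one has $\theta^*{\rm vol}_{1,i}={\rm vol}_{1,g_i}$. Consequently
$$\theta^*({\rm vol}_{1,1}\cdot\ldots\cdot{\rm vol}_{1,n})\;=\;{\rm vol}_{1,g_1}\cdot\ldots\cdot{\rm vol}_{1,g_n},$$
and on each $W_j$ the standard change of variables for densities under the embedding $\theta|_{W_j}$ gives $\int_{W_j}{\rm vol}_{1,g_1}\cdots{\rm vol}_{1,g_n}=\int_{\theta(W_j)}{\rm vol}_{1,1}\cdots{\rm vol}_{1,n}$.

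Next I would identify the zero count. Because $\theta_i(x)$ is a positive multiple of the evaluation functional $\varphi_i(x)\in V_i^*$, the function $s_i^*\in V_i$ vanishes at $x$ iff, viewed as a linear functional on $V_i^*$, it vanishes at $\theta_i(x)\in S_i$. Hence on each $W_j$ we have $N_{W_j}(s^*)=N_{\theta(W_j)}(s^*)$ for every $s^*$, and Theorem \ref{prod} applied to the embedded submanifold $\theta(W_j)\subset S$ yields the lemma on $W_j$. In particular, if $\theta|_U$ is a global embedding one may take $m=1$ and the argument is complete.

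In the general immersion case the image $\theta(U)\subset S$ may have self-intersections, which is the main obstacle: applying Theorem \ref{prod} directly to $\theta(U)$ would overcount common zeros coming from distinct sheets. To circumvent this I would establish the weighted variant
$$\int_{S^*}\Bigl(\sum_{x\in W_j,\,s_i^*(x)=0\ \forall i}\rho_j(x)\Bigr)ds^*\;=\;\frac{1}{\pi^n}\int_{W_j}\rho_j\cdot{\rm vol}_{1,g_1}\cdots{\rm vol}_{1,g_n},$$
which follows from the double fibration argument behind Proposition \ref{domik} after inserting the cutoff $\pi_1^*\rho_j$ into $\Omega=\pi_{1*}\pi_2^*(ds^*)$. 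Summing over $j$ and using $\sum_j\rho_j\equiv 1$ on $U$ collapses the left-hand sides to ${\mathfrak M}_U(V_1,\ldots,V_n)$ and the right-hand sides to the full integral over $U$; compactness of $\bar U$ takes care of all the finiteness and integrability issues.
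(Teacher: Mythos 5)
Your route through the double fibration with a partition of unity is genuinely different from the paper's, which covers $U$ by finitely many pieces on which $\theta$ embeds and then invokes inclusion--exclusion (both sides of~(\ref{main1}) being finitely additive in the domain). Your version trades the combinatorics of inclusion--exclusion for a \emph{weighted} Crofton formula; the core idea -- reduce the immersion case to local embeddings via a cover -- is the same, and your scheme is a legitimate alternative. One technical remark: the weighted identity you invoke (inserting $\pi_1^*\rho_j$ into $\Omega=\pi_{1*}\pi_2^*(ds^*)$) is not stated anywhere in the paper and is not an immediate corollary of Proposition~\ref{domik}; it is plausible and can be derived by the same coarea/double-fibration computation, but you would need to actually carry it out, at which point you have done at least as much work as the inclusion--exclusion argument.

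There is, however, a real gap at the very first step. You assert that ``the relatively compact $\bar U$ admits a finite cover by open sets $W_1,\ldots,W_m\subset U$ on each of which $\theta$ restricts to an embedding.'' Open sets contained in $U$ cannot cover the strictly larger compact set $\bar U$; what you actually need is a finite cover of $\bar U$ by opens of $X$, and for $\theta$ to embed on each of these you need $\theta$ to have no critical points on all of $\bar U$. But the lemma only assumes $\theta$ has no critical points \emph{in} $U$; critical points may sit on $\partial U=\bar U\setminus U$, and they do occur in precisely the situation the lemma is later applied to (in the ``End of the proof'' the lemma is used for $U\setminus D$ whose closure meets the critical locus $D$). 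In other words, you have silently strengthened the hypothesis to $D=\emptyset$. The paper addresses this by treating $D=\emptyset$ first and then, for $D\neq\emptyset$, applying the already-established case to $U\setminus D_i$ with shrinking neighborhoods $D_i\supset D$ and passing to the limit. You would need to add this exhaustion step (or an equivalent one) for your argument to prove the lemma as stated.
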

\begin{proof}
Assume first that $D = \emptyset $.
Then there is a finite covering
$\{U_i\}$ of $U$ such that $\theta \vert _{U_i}$ is a closed embedding for every $i$.
We know that (\ref{main1}) is valid for all intersections $U_{i_1}\cap \ldots \cap U_{i_k}$. Since
the both parts are additive functions of a domain, (\ref{main1}) for the union of $U_i$ follows by inclusion-exclusion principle.
In case $D\ne \emptyset$ one can 
apply (\ref {main1}) to $U \setminus D_i$ and get the required assertion as $i \to \infty$. 
 \end{proof}

\begin{lemma}\label{final2}
One has
$${\rm lim}_{i\to \infty}\int_{D_i}{\rm vol}_{1,g_1}\cdot \ldots \cdot {\rm vol}_{1,g_n} = 0.$$
\end{lemma}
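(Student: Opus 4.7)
The plan is to establish pointwise vanishing of the $n$-density $\omega:={\rm vol}_{1,g_1}\cdot\ldots\cdot{\rm vol}_{1,g_n}$ on the critical locus $D$, and then extract the integral limit by a routine continuity-and-compactness argument. The real content sits entirely in the pointwise vanishing; the passage to the integral is soft.

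To show $\omega_x=0$ for $x\in D$, I start from the fact that $d\theta_x=(d\theta_{1,x},\ldots,d\theta_{n,x})$ fails to be injective, so there is a nonzero $v\in\bigcap_{i=1}^n\ker d\theta_{i,x}$. Because $g_i=\theta_i^*h_i$ with $h_i$ a positive definite metric on $S_i^*$, one has $\ker d\theta_{i,x}=\ker g_{i,x}$, hence $v\in\bigcap_i\ker g_{i,x}$. The support function of ${\mathcal E}_i(x)\subset T_x^*X$ is $\sqrt{g_{i,x}}$, which vanishes on $v$; by central symmetry this forces ${\mathcal E}_i(x)$ into the hyperplane $\{\xi^*\in T_x^*X\mid\xi^*(v)=0\}$. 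All $n$ bodies ${\mathcal E}_i(x)$ therefore lie in one and the same hyperplane of $T_x^*X$, and consequently their mixed volume vanishes:
$$
{\rm V}_n({\mathcal E}_1(x),\ldots,{\mathcal E}_n(x))=0.
$$
Combined with the identity
$$
\omega_x\;=\;\frac{n!}{2^n}\,{\rm V}_n({\mathcal E}_1(x),\ldots,{\mathcal E}_n(x))\,dx
$$
(for any Riemannian density $dx$) that was just derived in the proof of Theorem \ref{first} from Theorem \ref{A_1,...,A_k}, this yields $\omega_x=0$.

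To finish, write $\omega=f\cdot dx$ with $f\ge 0$ continuous on the compact set $\overline U$; by the previous step $f\equiv 0$ on $D$. Given $\varepsilon>0$ the open set $\{f<\varepsilon\}$ contains $D=\bigcap_i D_i$, so since $\{D_i\}$ is a decreasing sequence of compact sets the finite intersection property yields an index $i_0$ with $D_i\subset\{f<\varepsilon\}$ for all $i\ge i_0$. Hence $\int_{D_i}\omega\le\varepsilon\cdot{\rm vol}(D_1)$ for such $i$, and letting $\varepsilon\to 0$ gives the desired limit. The one mildly delicate point is that at a critical point the ellipsoids ${\mathcal E}_i(x)$ can be lower-dimensional, so the displayed mixed-density identity must be read as an equality of functions continuous in the quadratic forms $g_{i,x}$ and extended by continuity from the non-degenerate case where Theorem \ref{A_1,...,A_k} applies directly; this is exactly the continuity already invoked inside the proof of Theorem \ref{first}, so no new work is needed.
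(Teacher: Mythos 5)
Your proof is correct and takes the same route as the paper's, which merely asserts (without justification) that the gauge function $f$ of $\omega = {\rm vol}_{1,g_1}\cdots{\rm vol}_{1,g_n}$ satisfies ${\rm max}_{\bar D_i} f \to 0$; you supply the missing step, namely that $f$ vanishes on $D$ because at a critical point $x$ all the ellipsoids ${\mathcal E}_i(x)$ lie in a common hyperplane of $T_x^*X$, forcing ${\rm V}_n({\mathcal E}_1(x),\ldots,{\mathcal E}_n(x))=0$. The ``mildly delicate point'' at the end is in fact not delicate: a lower-dimensional ellipsoid is still a zonoid, so Theorem~\ref{A_1,...,A_k} applies pointwise at critical points and no continuity extension is needed.
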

\begin{proof}
Choosing a Riemannian metric $g$ on $X$ we can write
$$({\rm vol}_{1,g_1}\cdot \ldots \cdot {\rm vol}_{1,g_n})(\xi _1 \wedge \ldots \wedge \xi_n) = f(x) \cdot {\rm vol}_{n,g}(\xi_1\wedge\ldots \wedge \xi_n), $$
where $\xi _ i \in T_x$. Then $f(x)$ is a non-negative continuous function and
${\rm max}_{x\in \bar {D_i}}f(x) \to 0$ as $i\to \infty$, hence the assertion.
\end{proof}
\noindent
For 
an arbitrary subset $A_k \subset S_k$ denote by $A_k^* \subset S^*_k$ the set of linear functions with a zero in $A_k$.
Consider $s^* = (s_1^*, \ldots, s_n^*) \in S^*$ as an $n$-tuple of functions on $S$.
For $A \subset S$
put $A^* =\{s^* \in S^* \, \vert \, \exists x\in A:   \forall i \  s_i^*(x) = 0\}$.
Note that if $A = A_1\times \ldots \times A_n$ then $A^* = A^*_1 \times \ldots \times A^*_n$.

\begin{lemma}\label{final3}
Let ${\mathcal D} \subset S$ be the set of critical values of $\theta $ on $\bar U$.
Then ${\mathcal D}^* \subset S^*$ has measure 0.
\end{lemma}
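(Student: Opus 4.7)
\medskip
\noindent
\textbf{Proof plan.} The strategy is to exhibit $\mathcal{D}^*$ as a subset of the critical values of a smooth map between equidimensional manifolds and then invoke Sard's theorem. Write $C\subset\bar U$ for the set of critical points of $\theta$, so that $\mathcal{D}=\theta(C)$ and $\mathcal{D}^*$ equals the set of $s^*\in S^*$ admitting some $x\in C$ with $s_i^*(x)=0$ for all $i$. Enlarging $\bar U$ to an open neighbourhood in $X$ if necessary, introduce the incidence variety
$$Z=\{(x,s^*)\in \bar U\times S^*\ :\ s_i^*(x)=0\ \text{for all}\ i=1,\ldots,n\}$$
together with the projection $\pi:Z\to S^*$; then $\mathcal{D}^*=\pi(Z\cap(C\times S^*))$.

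First I would verify that $Z$ is a smooth submanifold with $\dim Z=\dim S^*$. Setting $F(x,s^*)=(s_1^*(x),\ldots,s_n^*(x))\in\mathbb{R}^n$, one has $Z=F^{-1}(0)$. At $(x,s^*)\in Z$, assumption (\ref{value}) yields $g_i\in V_i$ with $g_i(x)\ne 0$; its orthogonal projection $\eta_i$ onto $T_{s_i^*}S_i^*=(s_i^*)^{\perp}$ still satisfies $\eta_i(x)=g_i(x)\ne 0$ because $s_i^*(x)=0$. Thus varying only the $s^*$-argument already makes $dF$ surjective onto $\mathbb{R}^n$, so $F$ is a submersion along $Z$ and $\dim Z=n+\dim S^*-n=\dim S^*$.

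The crucial step is to show that every $(x,s^*)\in Z$ with $x\in C$ is a critical point of $\pi$. Writing $s_i^*(y)=\theta_i(y)(s_i^*)$ and differentiating gives $d_xs_i^*(\xi)=s_i^*(d\theta_i(x)\xi)$, so the linear map $L:T_xX\to\mathbb{R}^n$, $L(\xi)=(d_xs_i^*(\xi))_i$, factors through $d\theta(x)$. The tangent space to $Z$ at $(x,s^*)$ consists of pairs $(\xi,\eta)$ satisfying $L(\xi)+E(\eta)=0$, where $E(\eta)=(\eta_i(x))_i$, and $d\pi(\xi,\eta)=\eta$. Since $E$ is surjective by the previous paragraph, the image of $d\pi$ has codimension $n-\operatorname{rank} L$ in $T_{s^*}S^*$. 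For $x\in C$ the rank of $d\theta(x)$ is strictly less than $n$, hence $\operatorname{rank} L<n$ and $d\pi$ is not surjective. Sard's theorem applied to $\pi$ then shows that its critical values form a null set in $S^*$, and $\mathcal{D}^*$, being contained in this set, has measure zero.

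I expect the bookkeeping of the third paragraph --- the factorisation of $L$ through $d\theta(x)$ and the resulting codimension count for the image of $d\pi$ --- to be the main technical point; smoothness of $Z$ and the application of Sard are routine once this has been done.
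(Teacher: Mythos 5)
Your proof is correct, but it takes a genuinely different route from the one in the paper. You introduce the incidence variety $Z=\{(x,s^*):s_i^*(x)=0\ \forall i\}$, verify that it is a smooth manifold equidimensional with $S^*$, and show that $\mathcal{D}^*$ is contained in the critical values of the projection $\pi:Z\to S^*$, so Sard's theorem for $\pi$ finishes the argument; the decisive point, which you handle correctly, is that along $Z$ the condition $s_i^*(x)=0$ kills the derivative of the normalizing factor in $\varphi_i(x)=\lVert\varphi_i(x)\rVert\,\theta_i(x)$, so that the map $L(\xi)=(d_xs_i^*(\xi))_i$ honestly factors through $d\theta(x)$ and inherits its rank drop at critical points. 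The paper instead applies the quantitative (Hausdorff-measure) form of Sard's theorem directly to $\theta:\bar U\to S$ to conclude that $\mathcal D$ has $n$-dimensional Hausdorff measure zero, and then converts this into ${\rm vol}_{S^*}(\mathcal D^*)=0$ by a covering argument: one covers $\mathcal D$ by product balls $B_i=B_{i,1}\times\cdots\times B_{i,n}$ of common radius $r(B_i)$, observes that $B_i^*=B_{i,1}^*\times\cdots\times B_{i,n}^*$ and that ${\rm vol}_{S^*}(B_i^*)\le C\,r^n(B_i)$, and lets the covering shrink. Your incidence-variety argument is self-contained within classical differential topology and sidesteps the Hausdorff-measure version of Sard; the paper's argument is more elementary measure-theoretically but needs the explicit product structure of both the covering balls in $S$ and their duals in $S^*$, together with the volume estimate for a single $B_i^*$. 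One small notational slip worth fixing: you write $s_i^*(y)=\theta_i(y)(s_i^*)$, whereas in fact $s_i^*(y)=\lVert\varphi_i(y)\rVert\,\theta_i(y)(s_i^*)$; the extra scalar factor is harmless for the rank count precisely because you only differentiate at points of $Z$, but the identity as stated is not literally true and should carry that caveat.
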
\begin{proof}
By Sard's lemma the $n$-dimensional Hausdorff measure of $\mathcal D$ is 0. 
Consider a covering of ${\mathcal D}$ by open sets $B_i =B_{i,1}\times \ldots \times B_{i,n}$,
where $B_{i,k} \subset S_k$ are open balls of the same radius $r(B_i)$ for any given $i$.
For $\epsilon >0 $ small enough there exists such a covering with $r(B_i) < \epsilon $ and 
$\sum _ir^n(B_i)$ arbitrarily small. Let ${\mathcal D}^*(\epsilon) = \cup _iB_i^*$ for such a covering.
Then $B_i^* = B_{i,1}^*\times \ldots \times B_{i,n}^*$ by construction, hence
$${\rm vol}_{S^*}(B_i^*) = \prod_{k=1}^n{\rm vol }_{S_k^*} B_{i,k}^*  < Cr^n(B_i).$$
By the definition of Hausdorff measure $\sum r^n(B_i) \to 0$ as $\epsilon \to 0$.
Therefore ${\rm vol}({\mathcal D}^*(\epsilon)) \to 0$.
\end{proof}
\noindent
{\it End of the proof.} Let $U$ be a relatively compact domain in $X$. Then
$N_U(s_1^*, \ldots, s_n^*) = N_{U\setminus D}(s_1^*, \ldots, s_n^*)$ 
almost everywhere on $S^*$ by Lemma \ref{final3}. But $N_{U\setminus D}$ is integrable by Lemma \ref{final1}.
Thus $N_U$ is also integrable
and the integrals of $N_U$ and $N_{U \setminus D}$ are equal. Finally, by Lemma \ref {final2} one can replace 
the integration domain $U$ on the right hand side of (\ref{main1}) by $U \setminus D$.
\hfill $\square $

\begin{thebibliography}{References}

\bibitem[1]{AK1} D.\,Akhiezer, B.\,Kazarnovskii, {\it On common zeros of eigenfunctions of the Laplace
operator}, Abh. Math. Sem. Univ. Hamburg 87, no.1 (2017), 105 -- 111,  DOI:10.1007/s12188-016-0138-1. 

\bibitem[2]{AK2} D.\,Akhiezer, B.\,Kazarnovskii, {\it An estimate for the average number of common zeros of Laplacian eigenfunctions}, Trudy Mosk. Matem. Obshchestva, vol. 78, no. 1 (2017), pp. 145 -- 154 (in Russian); Trans. Moscow Math. Soc. 2017,
pp. 123 -- 130 (English translation). 

\bibitem[3]{PF} J.-C.\,\' Alvarez Paiva, E.\,Fernandes, 
{\it Gelfand transforms and Crofton formulas}, Selecta Math., vol. 13, no.3
(2008), pp.369 -- 390.

\bibitem[4]{Al} A.D.\,Aleksandrov, {\it To the theory of mixed volumes of convex bodies. Part II: New
inequalities for mixed volumes and their applications}, 
Matem. Sbornik, vol. 2, no. 6 (1937), pp. 1205 -- 1235 (in Russian);
pp. 61 -- 98 in: Selected Works, Part I,
ed. by Yu.G.Reshetnyak and S.S.Kutetaladze, Amsterdam, Gordon and Breach, 1996 (English translation).  

\bibitem[5]{A} S.\,Alesker, {\it Theory of valuations on manifolds: a survey}, Geom. Funct. Anal. 17, no.4 (2007),
1321--1341.

\bibitem[6]{A1} S.\,Alesker, {\it A Fourier type transform on translation invariant valuations on convex sets},
Israel J. Math. 181, no.1 (2011), pp. 189--294.

\bibitem[7]{AB} S.\,Alesker, J.\,Bernstein, {Range characterization of the cosine transform on higher Grassmanians},
Advances in Math., vol. 184, no.2 (2004), pp.367 -- 379 

\bibitem[8]{Ar1} V.I.\,Arnold (ed), {\it Arnold's Problems}, Springer, 2005.  

\bibitem[9]{Bg} A.\,Bernig, {\it Valuations with Crofton formula and Finsler geometry}, Advances in Math., 210, no.2
(2007), pp.733--753.
 
\bibitem[10]{Be} D.N.\,Bernstein, {\it The number of roots of a system of equations}, Funct. Anal. Appl. 9, no.2 (1975),
pp.95--96 (in Russian); Funct. Anal. Appl. 9, no.3 (1975), pp.183--185 (English translation).

\bibitem[11]{CH} R.\,Courant $\&$ D.\,Hilbert, {\it Methods 
of Mathematical Physics, I},
Interscience Publishers, New York, 1953.

\bibitem[12]{F} D.Faifman, Crofton formulas and indefinite signature, Geom. Funct. Anal. 27, no. 3 (2017),
pp. 489--540.

\bibitem[13]{Ga} S.\,Gallot, D.\,Hulin, J.\,Lafontaine, {\it Riemannian Geometry}, Third edition, Springer, 2004.  

\bibitem[14]{Gar} R.J.Gardner, {\it Geometric tomography}, Cambridge Univ. Press, 1995.

\bibitem[15]{GS} I.M.\,Gelfand, M.M.\,Smirnov, {\it Lagrangians satisfying Crofton formulas, Radon transforms,
and nonlocal differentials}, Advances in Math. 109, no.2 (1994), pp. 188 -- 227.
  
\bibitem[16]{Gi} V.M.\,Gichev, {\it Metric properties in the mean of polynomials on compact isotropy irreducible
homogeneous spaces}, Anal. Math. Phys. 3, no.2 (2013), pp. 119 -- 144.

\bibitem[17]{Go} P.\,Goodey, R.\,Schneider, W.\,Weil, {\it On the determination of convex bo\-dies by projection functions},
Bulletin of London Math. Soc., 29, no.1 (1997), pp.82--88.

\bibitem[18]{Ho} R.\,Howard, D.\,Hug, {\it Smooth convex bodies with proportional projection functions}, 
Israel J. Math. 159, no. 1 (2007), pp.317--341.
 
\bibitem[19]{Iv} V.\,Ivrii, {\it 100 years of Weyl's law}, Bulletin of Mathematical Sciences, vol.6, no.3 (2016), pp. 379 - 452. 

\bibitem[20]{KK} K.\,Kaveh, A.G.\,Khovanskii, {\it Newton-Okounkov bodies,
semigroups of integral points, graded algebras
and intersection theory}, Ann. of Math., 176, no.2 (2012), pp.925--978.

\bibitem[21]{Ka1} B.\,Kazarnovskii, {\it On the zeros of exponential sums}, Doklady AN SSSR, vol. 257, no.4 (1981), pp.804--808
(in Russian); Soviet Math. Dokl., vol. 23, no.2 (1981), pp.347--351.

\bibitem[22]{Ka2} B.\,Kazarnovskii, {\it Newton polyhedra and zeros of systems of exponential sums},
Funct. Anal. Appl., vol. 18, no.4 (1984), pp.40--49 (in Russian); Funct. Anal. Appl., vol. 18, no.4 (1984), pp.299--307 (English
translation). 
 
\bibitem[23]{Kh} A.G.\,Khovanskii, {\it Algebra and mixed volumes}, pp.182--207
 in: Y.D.\,Burago, V.A.\,Zalgaller,
{\it Geometric inequalities}, Springer, 1988. 

\bibitem[24]{Kl} D.\,Klain, {\it Even valuations on convex bodies}, Trans. Amer. Math. Soc., vol. 352, no. 1 (2000), pp. 71--93.

\bibitem[25]{Sa} L.A.\,Santal\'o, {\it Integral Geometry and Geometric 
Probability}, Addison-Wesley, 1976.

\bibitem[26]{Sch} R.\,Schneider, {\it Convex Bodies: The Brunn-Minkowski Theory}, Second edition, Cambridge Univ. Press, 2013.

\bibitem[27]{Sh}T.\,Shifrin, {\it The kinematic fomula in complex integral geometry}, Trans. Amer. Math. Soc., vol. 264, no. 2
(1981), pp.255--293.

\end {thebibliography}
\end {document}